\titleformat{\section}{\normalsize\bfseries}{\thesection}{1em}{}
\titleformat{\subsection}{\normalsize\bfseries}{\thesubsection}{1em}{}
\numberwithin{equation}{subsection}
\newtheorem{defn}{Definition}[section]
\newtheorem{para}[defn]{}
\newtheorem{prop}[defn]{Proposition}
\newtheorem{theo}[defn]{Theorem}
\newtheorem{rmk}[defn]{Remark}
\newtheorem{egg}[defn]{Example}
\newtheorem*{claim*}{Claim}
\newtheorem*{just*}{Justification}
\newtheorem*{assumption*}{Assumption}
\newtheorem*{lem*}{Lemma}
\newtheorem*{prop*}{Proposition}
\newtheorem*{thm*}{Theorem}
\newcommand{\V}{\mathscr{V}}
\newcommand{\ob}{\mathop{\mathsf{ob}}}
\newcommand{\tensor}{\otimes}
\newcommand{\Cat}{\text{-}\mathsf{Cat}}
\newcommand{\underJ}{\kern -0.5ex \mathscr{J}}
\newcommand{\Set}{\mathsf{Set}}
\newcommand{\T}{\mathbb{T}}
\newcommand{\C}{\mathscr{C}}
\newcommand{\A}{\mathscr{A}}
\newcommand{\Mod}{\text{-}\mathsf{Mod}}
\newcommand{\scrS}{\mathscr{S}}
\newcommand{\Xbar}{\overline{X}}
\newcommand{\E}{\mathsf{E}}
\newcommand{\Var}{\mathsf{Var}}
\newcommand{\Pos}{\mathsf{Pos}}
\newcommand{\Str}{\mathsf{Str}}
\newcommand{\bbS}{\mathbb{S}}
\newcommand{\calS}{\mathcal{S}}
\newcommand{\PMet}{\mathsf{PMet}}
\newcommand{\Met}{\mathsf{Met}}
\newcommand{\sfk}{\mathsf{k}}
\newcommand{\Convex}{\mathsf{Convex}}
\newcommand{\Preord}{\mathsf{Preord}}
\newcommand{\Gph}{\text{-}\mathsf{Gph}}
\newcommand{\RGph}{\text{-}\mathsf{RGph}}
\newcommand{\Rbar}{\overline{R}}
\newcommand{\Sbar}{\overline{S}}
\begin{document}

\title{\Large \textbf{Exponentiability in categories of relational structures}}
\author{Jason Parker
\medskip \\
\small Brandon University, Brandon, Manitoba, Canada}
\date{}

\maketitle

\begin{abstract}
For a relational Horn theory $\T$, we provide useful sufficient conditions for the exponentiability of objects and morphisms in the category $\T\Mod$ of $\T$-models; well-known examples of such categories, which have found recent applications in the study of programming language semantics, include the categories of preordered sets and (extended) metric spaces. As a consequence, we obtain useful sufficient conditions for $\T\Mod$ to be cartesian closed, locally cartesian closed, and even a quasitopos; in particular, we provide two different explanations for the cartesian closure of the categories of preordered and partially ordered sets. Our results recover (the sufficiency of) certain conditions that have been shown by Niefield and Clementino--Hofmann to characterize exponentiability in the category of partially ordered sets and the category $\mathscr{V}\text{-}\mathsf{Cat}$ of small $\V$-categories for certain commutative unital quantales $\V$. 
\end{abstract}

\section{Introduction}

An object $X$ of a category $\C$ with finite products is \emph{exponentiable} if the product functor $X \times (-) : \C \to \C$ has a right adjoint, while a morphism $f : X \to Y$ of a category $\C$ with finite limits is \emph{exponentiable} if the object $f : X \to Y$ of the slice category $\C/Y$ is exponentiable, or equivalently if the pullback functor $f^* : \C/Y \to \C/X$ has a right adjoint (see \cite[Corollary 1.2]{Cartesianness}). A category with finite products is \emph{cartesian closed} if every object is exponentiable, and a category with finite limits is \emph{locally cartesian closed} if every morphism is exponentiable. The exponentiable objects and morphisms of many different categories have been studied and characterized in the literature. For example, the exponentiable objects in the category $\mathsf{Top}$ of topological spaces and continuous maps were characterized by Day and Kelly in \cite{Day_Kelly_quotients}. In \cite{Cartesianness}, Niefield characterized the exponentiable morphisms of $\mathsf{Top}$ and also of the categories $\mathsf{Unif}$ and $\mathsf{Aff}$ of respectively \emph{uniform spaces} and \emph{affine schemes}, while in \cite{Exp_morphisms} Niefield characterized the exponentiable morphisms of the category $\mathsf{Pos}$ of partially ordered sets and monotone maps. In \cite{Exponentiation_in_V_categories, Rise_and_fall}, Clementino and Hofmann characterized the exponentiable objects and morphisms of the category $\V\Cat$ of (small) $\V$-categories and $\V$-functors for a \emph{commutative unital quantale} $\V$ whose underlying complete lattice is a complete Heyting algebra. In particular, they characterized the exponentiable objects and morphisms of many categories of (pseudo-)metric spaces.

The present paper provides a further contribution to the study of exponentiability, in the context of categories of \emph{relational structures}, which have recently attracted interest in the study of programming language semantics (see \cite{Monadsrelational, Quant_alg_reasoning}). Specifically, we provide useful sufficient conditions for the exponentiability of objects and morphisms in the category $\T\Mod$ of $\T$-models for a \emph{relational Horn theory} $\T$. As we recall in Example \ref{without_equality_examples}, prominent examples of such categories include: the categories $\mathsf{Preord}$ and $\mathsf{Pos}$ of preordered and partially ordered sets (with monotone maps); for a commutative unital quantale $\V$, the categories $\V\Gph$, $\V\RGph$, $\V\Cat$, $\PMet_\V$, and $\Met_\V$ of respectively (small) \emph{$\V$-graphs}, \emph{reflexive $\V$-graphs}, \emph{$\V$-categories}, \emph{pseudo-$\V$-metric spaces}, and \emph{$\V$-metric spaces}. Our results recover (the sufficiency of) the conditions for exponentiability established for $\mathsf{Pos}$ (by Niefield) and for $\V\Cat$ (by Clementino--Hofmann). We also provide useful sufficient conditions for categories of relational structures to be cartesian closed, locally cartesian closed, and even quasitoposes. In particular, we offer two different explanations for why the categories $\Preord$ and $\Pos$ of preordered and partially ordered sets are cartesian closed.  

We now outline the paper. After recalling some categorical background in \S\ref{background} about exponentiability (in particular, a useful characterization of exponentiability of morphisms established by Dyckhoff and Tholen in \cite{Dyckhoff}), in \S\ref{first_section} we recall some relevant background on relational Horn theories and categories of relational structures from \cite{Extensivity} (and \cite{Monadsrelational}). In \S\ref{exp_Str} we first show that if $\Pi$ is any \emph{relational signature}, then the category $\Str(\Pi)$ of \emph{$\Pi$-structures} and \emph{$\Pi$-morphisms} is always locally cartesian closed, and even a quasitopos (Theorem \ref{Str_lcc}). 

In \S\ref{exponentiable_Tmod} we begin to study exponentiability in $\T\Mod$ for a relational Horn theory $\T$ over a relational signature $\Pi$; unlike the situation for $\Str(\Pi)$, in general $\T\Mod$ is not even cartesian closed, let alone locally cartesian closed or a quasitopos (however, $\T\Mod$ is always \emph{symmetric monoidal closed}; see Remark \ref{closed_structure}). In many examples of relational Horn theories, the relational signature $\Pi$ carries a (pointwise) preorder that manifests in the axioms of the theory, and so we consider \emph{preordered relational signatures} (Definition \ref{preord_sig}). We say that a preordered relational signature $\Pi$ is \emph{discrete} or a \emph{complete Heyting algebra} if the (pointwise) preorder on $\Pi$ is respectively discrete or a complete Heyting algebra. 

In \S\ref{disc_section} we identify a useful sufficient condition for objects and morphisms of $\T\Mod$ to be exponentiable, which we call \emph{convexity} (Definition \ref{convex_unified}), when $\T$ is a \emph{reflexive} relational Horn theory over a discrete relational signature $\Pi$. Central examples of such relational Horn theories include the theories for preordered and partially ordered sets; in particular, a morphism of preordered or partially ordered sets is convex iff it is an \emph{interpolation-lifting map} in the sense of Niefield \cite[Definition 2.1]{Exp_morphisms} (see Example \ref{convex_egg}). When the axioms of $\T$ have certain properties (see Definition \ref{safe_axiom}), we can show that all morphisms (or at least objects) of $\T\Mod$ are convex and hence exponentiable, thus allowing us to establish useful sufficient conditions for the (local) cartesian closure of $\T\Mod$, and for $\T\Mod$ to be a quasitopos (Theorems \ref{lcc_thm} and \ref{quasitopos_thm}). In Examples \ref{preord_cc} and \ref{trans_egg} we provide two explanations for the cartesian closure of the categories $\Preord$ and $\Pos$ of preordered and partially ordered sets. 

In \S\ref{non_disc_section} we identify a useful sufficient condition for objects and morphisms of $\T\Mod$ to be exponentiable, which we again call \emph{convexity} (Definition \ref{schema_convex}), when $\T$ is a certain general kind of relational Horn theory over a preordered relational signature $\Pi$ that is (pointwise) a complete Heyting algebra. Again, when the axioms of $\T$ have certain properties (Definition \ref{schema_safe}), we can show that all morphisms (or at least objects) of $\T\Mod$ are exponentiable, thus allowing us to provide useful sufficient conditions for the (local) cartesian closure of $\T\Mod$, and for $\T\Mod$ to be a quasitopos (Theorems \ref{lcc_thm_V} and \ref{quasitopos_thm_2}). Our results in \S\ref{non_disc_section} recover a known sufficient condition for a morphism of $\V\Cat$ (i.e.~a $\V$-functor) to be exponentiable, where $\V$ is a commutative unital quantale whose underlying complete lattice is a complete Heyting algebra (see Example \ref{schema_convex_egg}; this condition is also \emph{necessary} for exponentiability, as shown in \cite[Theorem 3.4]{Exponentiation_in_V_categories}). In Remark \ref{further} we discuss some further questions that could be pursued.     

\section{General categorical background}
\label{background}

We first recall some general categorical background that we shall require.

\begin{para}
\label{lcc_para}
{\em
An object $C$ of a category $\C$ with finite products is \emph{exponentiable} if the functor $C \times (-) : \C \to \C$ has a right adjoint $(-)^C : \C \to \C$. A morphism $f : C \to D$ of a category $\C$ with finite limits is \emph{exponentiable} if it is exponentiable as an object of the slice category $\C/D$; equivalently, if the pullback functor $f^* : \C/D \to \C/C$ has a right adjoint (see \cite[Corollary 1.2]{Cartesianness}). A category $\C$ is \emph{cartesian closed} (resp.~\emph{locally cartesian closed}) if it has finite products (resp.~finite limits) and every object (resp.~morphism) of $\C$ is exponentiable. In particular, every locally cartesian closed category is cartesian closed (since an object $C$ is exponentiable iff the unique morphism $!_C : C \to 1$ is exponentiable).
}
\end{para}

\begin{para}
\label{partial_prod_para}
{\em
Let $\C$ be a category with finite limits, let $f : X \to Z$ be a morphism of $\C$, and let $Y$ be an object of $\C$. A \emph{partial product of $Y$ over $f$} \cite{Dyckhoff} is an object $P = P(Y, f)$ equipped with morphisms $p : P \to Z$ and $\varepsilon : P \times_{Z} X \to Y$ satisfying the universal property that for all morphisms $q : Q \to Z$ and $g : Q \times_{Z} X \to Y$, there is a unique morphism $h : Q \to P$ such that $p \circ h = q$ and $\varepsilon \circ \left(h \times_Z 1_X\right) = g$, as in the following commutative diagram:
\[\begin{tikzcd}
	Y && {P \times_{Z} X} && X \\
	& {Q \times_{Z} X} \\
	&& P && Z \\
	& Q
	\arrow["{\pi_2}", from=1-3, to=1-5]
	\arrow["{\pi_1}"', from=1-3, to=3-3]
	\arrow["p", from=3-3, to=3-5]
	\arrow["f", from=1-5, to=3-5]
	\arrow["\varepsilon", tail reversed, no head, from=1-1, to=1-3]
	\arrow["h"{description}, dashed, from=4-2, to=3-3]
	\arrow["q"', from=4-2, to=3-5]
	\arrow["{h \times_{Z} 1_X}"{description}, from=2-2, to=1-3]
	\arrow[from=2-2, to=4-2]
	\arrow[from=2-2, to=1-5]
	\arrow["g", from=2-2, to=1-1]
\end{tikzcd}\]
We say that $\C$ \emph{has all partial products over $f$} if every object of $\C$ has a partial product over $f$.
By \cite[Lemma 2.1]{Dyckhoff}, a morphism $f$ of $\C$ is exponentiable iff $\C$ has all partial products over $f$. 
}
\end{para}

\begin{para}
\label{concrete_para}
{\em
A \emph{concrete category (over $\Set$)} is a category $\C$ equipped with a faithful functor $|-| : \C \to \Set$, and the \emph{fibre} of a set $S$ is the (preordered) class of objects $X$ of $\C$ with $|X| = S$ (see \cite[Definition 5.4]{AHS}). A concrete category $\C$ is \emph{fibre-small} if the fibre of each set is small (see \cite[Definition 5.4]{AHS}), and it is \emph{well-fibred} if it is fibre-small and every set with at most one element has exactly one element in its fibre (see \cite[Definition 27.20]{AHS}). A concrete category $\C$ \emph{admits constant morphisms} if for all objects $X$ and $Y$ of $\C$, every constant function $|X| \to |Y|$ lifts to a $\C$-morphism $X \to Y$.  
}
\end{para}

\begin{para}
\label{quasitopos_para}
{\em
Recall from (e.g.)~\cite[Definition 28.7]{AHS} that a category is a \emph{quasitopos} if it is finitely complete and finitely cocomplete, locally cartesian closed, and has a weak subobject classifier (i.e.~a classifier of strong subobjects; we shall not need an explicit definition). In particular, if $\C$ is a concrete category that is \emph{topological over $\Set$} (see \cite[\S 21]{AHS} for an explicit definition, which we also shall not need), then $\C$ is complete and cocomplete and has a weak subobject classifier by \cite[III.4.J]{Monoidaltop}, which is obtained by equipping the subobject classifier of $\Set$ (i.e.~any two-element set) with the indiscrete structure. So a topological category over $\Set$ is a quasitopos iff it is locally cartesian closed. 
}
\end{para}

\section{Background on categories of relational structures}
\label{first_section}

We now review relational Horn theories and their categories of models; much of the content of this section is taken from the author's work \cite[\S 3 and \S4]{Extensivity}; see also \cite[\S 3]{Monadsrelational}. 

\begin{defn}
\label{relational_sig}
{\em
A \textbf{relational signature} is a set $\Pi$ of \textbf{relation symbols} equipped with an assignment to each relation symbol of a \emph{finite arity}, i.e.~a natural number $n \geq 1$.
}
\end{defn}

\noindent We shall usually write $R$ for an arbitrary relation symbol. We fix a relational signature $\Pi$ for the rest of \S\ref{first_section}.

\begin{defn}
\label{edge}
{\em
Let $S$ be a set. A \textbf{$\Pi$-edge} in $S$ is a pair $(R, (s_1, \ldots, s_n))$ consisting of a relation symbol $R \in \Pi$ (of arity $n \geq 1$) and an $n$-tuple $(s_1, \ldots, s_n) \in S^n$. A \textbf{$\Pi$-structure} $X$ consists of a set $|X|$ equipped with a subset $R^X \subseteq |X|^n$ for each relation symbol $R \in \Pi$ (of arity $n \geq 1$). We can also describe a $\Pi$-structure $X$ as a set $|X|$ equipped with a set $\E(X)$ of $\Pi$-edges in $|X|$: if $R \in \Pi$ of arity $n \geq 1$, then $(x_1, \ldots, x_n) \in R^X$ iff $\E(X)$ contains the $\Pi$-edge $(R, (x_1, \ldots, x_n))$. We shall often write $X \models Rx_1\ldots x_n$ instead of $(x_1, \ldots, x_n) \in R^X$.
}
\end{defn} 

\noindent When $R$ is a binary relation symbol (i.e.~its arity is $2$), we shall also sometimes write $X \models x_1Rx_2$ rather than $X \models Rx_1x_2$.   

\begin{defn}
\label{Pi_morphism}
{\em
Let $h : S \to T$ be a function from a set $S$ to a set $T$, and let $e = (R, (s_1, \ldots, s_n))$ be a $\Pi$-edge in $S$. We write $h \cdot e = h \cdot (R, (s_1, \ldots, s_n))$ for the $\Pi$-edge $(R, (h(s_1), \ldots, h(s_n)))$ in $T$. For a set $E$ of $\Pi$-edges in $S$, we write $h \cdot E$ for the set of $\Pi$-edges $\{h \cdot e \mid e \in S\}$ in $T$. A \textbf{($\Pi$-)morphism $h : X \to Y$} from a $\Pi$-structure $X$ to a $\Pi$-structure $Y$ is a function $h : |X| \to |Y|$ satisfying $h \cdot \E(X) \subseteq \E(Y)$. We then have the concrete category $\Str(\Pi)$ of $\Pi$-structures and their morphisms.    
}
\end{defn} 

\noindent We now turn to the syntax of relational Horn theories. Throughout the paper, we fix an infinite set of variables $\Var$.  

\begin{defn}
\label{Horn_formula}
{\em
A \textbf{relational Horn formula (over $\Pi$)} is an expression $\Phi \Longrightarrow \psi$, where $\Phi$ is a set of $\Pi$-edges in $\Var$ and $\psi$ is a $\left(\Pi \cup \{=\}\right)$-edge in $\Var$, for a binary relation symbol $=$ not in $\Pi$. If $\Phi = \{\varphi_1, \ldots, \varphi_n\}$ is finite, then we write $\varphi_1, \ldots, \varphi_n \Longrightarrow \psi$, and if $\Phi = \varnothing$, then we write $\Longrightarrow \psi$. A \textbf{relational Horn formula without equality (over $\Pi$)} is a relational Horn formula $\Phi \Longrightarrow \psi$ (over $\Pi$) such that $\psi$ does not contain $=$.    
}
\end{defn}

\noindent We shall typically write $\Pi$-edges in $\Var$ as $Rv_1 \ldots v_n$\footnote{We emphasize that the notation $Rv_1 \ldots v_n$ is \emph{not} meant to suggest that the variables $v_1, \ldots, v_n$ are pairwise distinct; i.e.~we may have $v_i \equiv v_j$ for distinct $1 \leq i, j \leq n$.} rather than $(R, (v_1, \ldots, v_n))$, and when $R \in \Pi$ has arity $2$, we shall typically write $v_1 R v_2$ rather than $Rv_1v_2$.

\begin{defn}
\label{variable_set}
{\em
For any $(\Pi \cup \{=\})$-edge $\varphi \equiv Rv_1 \ldots v_n$ in $\Var$, we define $\Var(\varphi) := \{v_1, \ldots, v_n\}$. If $\Phi$ is a set of $(\Pi \cup \{=\})$-edges in $\Var$, then we set $\Var(\Phi) := \bigcup_{\varphi \in \Phi} \Var(\varphi)$.
}
\end{defn} 

\begin{defn}
\label{Horn_theory}
{\em
A \textbf{relational Horn theory $\T$ (without equality)} is a set of relational Horn formulas (without equality) over $\Pi$, which we call the \emph{axioms} of $\T$. We shall assume throughout that if $\Phi \Longrightarrow v_1 = v_2$ is an axiom of $\T$ with equality, then $\Var(\Phi) = \{v_1, v_2\}$\footnote{This mild but simplifying assumption (which is satisfied by all the examples of Example \ref{without_equality_examples}) is explicitly invoked in the proofs of Theorems \ref{convex_thm_unified} and \ref{schema_convex_part_prod}.}.   
}
\end{defn}

\begin{defn}
\label{formula_satis}
{\em
Let $X$ be a $\Pi$-structure. We define a $\left(\Pi \cup \{=\}\right)$-structure $\overline{X}$ by $\left|\overline{X}\right| := |X|$ and $\E\left(\overline{X}\right) := \E(X) \cup \{(=, (x, x)) \mid x \in |X|\}$. A \textbf{valuation in $X$} is a function $\kappa : \Var \to |X|$. We say that $X$ \textbf{satisfies} a relational Horn formula $\Phi \Longrightarrow \psi$ if $\Xbar \models \kappa \cdot \psi$ for every valuation $\kappa$ in $X$ such that $X \models \kappa \cdot \varphi$ for each $\varphi \in \Phi$. A \textbf{model} of a relational Horn theory $\T$ (or \textbf{$\T$-model}) is a $\Pi$-structure that satisfies all axioms of $\T$. We let $\T\Mod$ be the full subcategory of $\Str(\Pi)$ spanned by the $\T$-models, so that $\T\Mod$ is a concrete category.          
}
\end{defn}

\begin{rmk}
\label{val_rmk}
{\em
Let $X$ be a $\Pi$-structure, and let $\varphi$ be a ($\Pi \cup \{=\}$)-edge in $\Var$. It is clear that if $\kappa, \kappa' : \Var \to |X|$ are valuations that agree on $\Var(\varphi)$ but may not agree on variables in $\Var \setminus \Var(\varphi)$, then $X \models \kappa \cdot \varphi$ iff $X \models \kappa' \cdot \varphi$. Hence, we shall often specify valuations simply by defining their values on a specific subset of $\Var$ of interest, with the understanding that the valuation is defined arbitrarily on variables outside of this specific subset. 
}
\end{rmk}

\begin{egg}
\label{without_equality_examples}
{\em
We have the following central examples of relational Horn theories:
\begin{enumerate}[leftmargin=*]
\item If $\T$ is the empty relational Horn theory, then of course $\T\Mod = \Str(\Pi)$. In particular, if $\Pi$ is empty, then $\T\Mod = \Set$.\label{empty} 

\item Let $\Pi$ have a single binary relation symbol $\leq$, and let $\T$ be the relational Horn theory over $\Pi$ that contains the axioms $\Longrightarrow x \leq x$ and $x \leq y, y \leq z \Longrightarrow x \leq z$. Then $\T\Mod$ is the concrete category $\mathsf{Preord}$ of preordered sets and monotone functions. If one adds the additional axiom $x \leq y, y \leq x \Longrightarrow x = y$, then the category of models of the resulting relational Horn theory is the concrete category $\mathsf{Pos}$ of posets and monotone functions.\label{preord}  

\item The following examples derive from \cite{Kelly, Metagories}. Let $(\V, \leq, \tensor, \sfk)$ be a \emph{commutative unital quantale} \cite{Quantales}, i.e.~$(\V, \leq)$ is a complete lattice and $(\V, \tensor, \sfk)$ is a commutative monoid and $\tensor$ preserves all suprema in each variable. A \emph{$\V$-graph} or \emph{$\V$-valued relation} $(X, d)$ is a set $X$ equipped with a function $d : X \times X \to \V$. A \emph{reflexive $\V$-graph} is a $\V$-graph $(X, d)$ satisfying $d(x, x) \geq \sfk$ for all $x \in X$. A \emph{$\V$-category} is a reflexive $\V$-graph $(X, d)$ satisfying $d(x, z) \geq d(x, y) \tensor d(y, z)$ for all $x, y, z \in X$. A \emph{pseudo-$\V$-metric space} is a $\V$-category $(X, d)$ satisfying $d(x, y) = d(y, x)$ for all $x, y \in X$. Finally, a \emph{$\V$-metric space} is a pseudo-$\V$-metric space $(X, d)$ satisfying $d(x, y) \geq \sfk \Longrightarrow x = y$ for all $x, y \in X$. If $(X, d_X)$ and $(Y, d_Y)$ are $\V$-graphs, then a \emph{$\V$-functor} or \emph{$\V$-contraction} $h : (X, d_X) \to (Y, d_Y)$ is a function $h : X \to Y$ such that $d_X(x, x') \leq d_Y(h(x), h(x'))$ for all $x, x' \in X$. We let $\V\Gph$ be the concrete category of $\V$-graphs and $\V$-functors, and we let $\V\RGph$ (resp.~$\V\Cat$, $\PMet_\V$, $\Met_\V$) be the full subcategory of $\V\Gph$ consisting of the reflexive $\V$-graphs (resp.~the $\V$-categories, the pseudo-$\V$-metric spaces, the $\V$-metric spaces). 

Let $\Pi_\V$ have binary relation symbols $\sim_v$ for all $v \in \V$. We let $\T_{\V\Gph}$ be the relational Horn theory over $\Pi_\V$ that consists of the axioms $x \sim_v y \Longrightarrow x \sim_{v'} y$ for all $v, v' \in \V$ with $v \geq v'$, together with the axioms $\{x \sim_{v_i} y \mid i \in I\} \Longrightarrow x \sim_{\bigvee_i v_i} y$ for all small families $(v_i)_{i \in I}$ of elements of $\V$. We let $\T_{\V\RGph}$ be the relational Horn theory over $\Pi_\V$ that extends $\T_{\V\Gph}$ by adding the single axiom $\Longrightarrow x \sim_\sfk x$. We let $\T_{\V\Cat}$ be the relational Horn theory over $\Pi_\V$ that extends $\T_{\V\RGph}$ by adding the axioms $x \sim_v y, y \sim_{v'} z \Longrightarrow x \sim_{v \tensor v'} z$ for all $v, v' \in \V$. We let $\T_{\PMet_\V}$ be the relational Horn theory over $\Pi_\V$ that extends $\T_{\V\Cat}$ by adding the axioms $x \sim_v y \Longrightarrow y \sim_v x$ for all $v \in \V$. Finally, we let $\T_{\Met_\V}$ be the relational Horn theory over $\Pi_\V$ that extends $\T_{\PMet_\V}$ by adding the single axiom $x \sim_\sfk y \Longrightarrow x = y$. It is shown in \cite[Appendix]{Extensivity} that $\T_{\V\Gph}\Mod$ (resp.~$\T_{\V\RGph}\Mod$, $\T_{\V\Cat}\Mod$, $\T_{\PMet_\V}\Mod$, $\T_{\Met_\V}\Mod$) is isomorphic to $\V\Gph$ (resp.~$\V\RGph$, $\V\Cat$, $\PMet_\V$, $\Met_\V$)\footnote{The first two isomorphisms are not explicitly established in \cite[Appendix]{Extensivity}, but they immediately follow from the proofs given there.}.\label{quantale}
\end{enumerate}
}
\end{egg}

\begin{para}
\label{relational_top}
{\em
Let $\T$ be a relational Horn theory. If $\T$ is without equality, then the concrete category $\T\Mod$ is topological over $\Set$ (see \cite[Proposition 4.4]{Extensivity} or \cite[Proposition 5.1]{Rosickyconcrete}. In general, the concrete category $\T\Mod$ is \emph{monotopological} over $\Set$ (in the sense of \cite[Definition 21.38]{AHS}; see \cite[Proposition 5.5]{Rosickyconcrete}). Thus, given a small diagram $D : \A \to \T\Mod$, the limit cone of $D$ is the \emph{initial lift} of the limit cone of $|-| \circ D$ in $\Set$ (see e.g.~\cite[21.15]{AHS}). In particular, the functor $|-| : \T\Mod \to \Set$ strictly preserves small limits. The category $\T\Mod$ is also cocomplete, and moreover locally presentable (by \cite[Proposition 5.30]{LPAC}). Finally, the full subcategory $\T\Mod \hookrightarrow \Str(\Pi)$ is (epi-)reflective by \cite[Proposition 3.6]{Monadsrelational}, so that every $\Pi$-structure generates a free $\T$-model. 
}
\end{para}

\begin{para}
\label{relational_prod}
{\em
Let $\T$ be a relational Horn theory. In view of \ref{relational_top}, the product $X \times Y$ in $\T\Mod$ of $\T$-models $X$ and $Y$ is given by $|X \times Y| = |X| \times |Y|$ with 
\[ R^{X \times Y} = \left\{\left((x_1, y_1), \ldots, (x_n, y_n)\right) \in \left(|X| \times |Y|\right)^n \mid X \models Rx_1\ldots x_n \text{ and } Y \models Ry_1\ldots y_n\right\} \]
for each $R \in \Pi$ of arity $n \geq 1$, with the product projections as in $\Set$. The terminal object $1$ of $\T\Mod$ is given by $|1| = \{\ast\}$ and $1 \models R\ast\ldots\ast$ for each $R \in \Pi$.

The pullback $A \times_C B$ in $\T\Mod$ of $\T$-model morphisms $f : A \to C$ and $g : B \to C$ is given by $\left|A \times_C B\right| = |A| \times_{|C|} |B| = \{(a, b) \in |A| \times |B| \mid f(a) = g(b)\}$ with
\[ R^{A \times_C B} = \left\{((a_1, b_1), \ldots, (a_n, b_n)) \in \left|A \times_C B\right|^n \mid A \models R a_1 \ldots a_n \text{ and } B \models R b_1 \ldots b_n\right\} \] for each $R \in \Pi$ of arity $n \geq 1$, with the pullback projections as in $\Set$.  
}
\end{para}

\begin{para}
\label{closed_structure}
{\em
Let $\T$ be a relational Horn theory. For the remainder of the paper we shall be concerned with providing sufficient conditions for $\T\Mod$ to be (locally) cartesian closed, but it is worth noting that $\T\Mod$ is always at least \emph{symmetric monoidal closed}, which we now recall from \cite[Definition 3.11 and Corollary 3.13]{Monadsrelational}. For $\T$-models $X$ and $Y$, the internal hom $[X, Y]$ has underlying set $\left|[X, Y]\right| = \T\Mod(X, Y)$, and for $R \in \Pi$ of arity $n \geq 1$ and $\Pi$-morphisms $f_1, \ldots, f_n : X \to Y$ we have $[X, Y] \models Rf_1\ldots f_n$ iff $Y \models Rf_1(x)\ldots f_n(x)$ for each $x \in |X|$. For $\T$-models $X$ and $Y$, the tensor product $X \tensor Y$ is the free $\T$-model \eqref{relational_top} on the $\Pi$-structure $A$ with $|A| := |X| \times |Y|$ and $A \models R(x_1, y_1)\ldots(x_n, y_n)$ iff (i) $x_1 = \ldots = x_n$ and $Y \models R y_1 \ldots y_n$, or (ii) $y_1 = \ldots = y_n$ and $X \models R x_1 \ldots x_n$, for each $R \in \Pi$ of arity $n \geq 1$. The tensor unit is the free $\T$-model \eqref{relational_top} on the $\Pi$-structure $I$ with $|I|$ a singleton set and $R^I := \varnothing$ for each $R \in \Pi$.     
} 
\end{para}

\section{Exponentiability in $\Str(\Pi)$}
\label{exp_Str}

We fix a relational signature $\Pi$ for the remainder of this section. We shall first consider exponentiability in the category $\Str(\Pi)$ of $\Pi$-structures and $\Pi$-morphisms; in fact, we shall prove in Theorem \ref{Str_lcc} that $\Str(\Pi)$ is always locally cartesian closed, and even a quasitopos.

\begin{defn}
\label{restricted_structure}
{\em
Let $f : X \to Z$ be a morphism of $\Pi$-structures. For each $z \in |Z|$, we define a $\Pi$-structure $X_{f, z}$ as follows: we set $\left| X_{f, z}\right| := f^{-1}(z) \subseteq |X|$, and for each $R \in \Pi$ of arity $n \geq 1$, we set $R^{X_{f, z}} := R^X \cap \left|X_{f, z}\right|^n$.
}
\end{defn}

\begin{defn}
\label{partial_product_Str}
{\em
Let $f : X \to Z$ be a morphism of $\Pi$-structures, and let $Y$ be a $\Pi$-structure. We define a $\Pi$-structure $P = P(Y, f)$ as follows. We set
\[ |P| := \left\{(j, z) \mid z \in |Z| \text{ and } j \in \Set\left(|X_{f, z}|, |Y|\right)\right\}. \] Now let $R \in \Pi$ of arity $n \geq 1$, and let $(j_1, z_1), \ldots, (j_n, z_n) \in |P|$. Then we set
\[ P \models R(j_1, z_1) \ldots (j_n, z_n) \] iff $Z \models Rz_1 \ldots z_n$ and for all $x_1 \in f^{-1}(z_1), \ldots, x_n \in f^{-1}(z_n)$ such that $X \models Rx_1 \ldots x_n$, we have $Y \models Rj_1(x_1)\ldots j_n(x_n)$. We then have a $\Pi$-morphism $p : P \to Z$ given by $p(j, z) := z$ for each $(j, z) \in |P|$, and a $\Pi$-morphism $\varepsilon : P \times_Z X \to Y$ given by $\varepsilon((j, z), x) := j(x)$ for each $((j, z), x) \in |P \times_Z X|$. 
}
\end{defn} 

\begin{rmk}
\label{Pi_partial_rmk}
{\em
In the definition of $|P|$ in Definition \ref{partial_product_Str}, one might wonder why the first component of an element $(j, z) \in |P|$ is just a function $j : |X_{f, z}| \to |Y|$ rather than a $\Pi$-morphism $j : X_{f, z} \to Y$. The forgetful functor $|-| : \Str(\Pi) \to \Set$ is represented (not by the terminal object $1$ but) by the tensor unit $\Pi$-structure $I$ defined in \ref{closed_structure} by $|I| := \{\ast\}$ and $\E(I) := \varnothing$. In other words, for each $\Pi$-structure $X$ we have a natural bijection $|X| \cong \Str(\Pi)(I, X)$. If we want $P$ to be a partial product of $Y$ over $f$ (see \ref{partial_prod_para}), then by setting $Q := I$ in the definition of partial product, we see that $|P| \cong \Str(\Pi)(I, P)$ must be isomorphic to the set given in Definition \ref{partial_product_Str}, since for each $z \in |Z|$ and corresponding $\Pi$-morphism $\bar{z} : I \to Z$, the pullback $I \times_Z X$ satisfies $|I \times_Z X| \cong |X_{f, z}|$ but $\E\left(I \times_Z X\right) = \varnothing$, so that a $\Pi$-morphism $I \times_Z X \to Y$ is just a function $|X_{f, z}| \to |Y|$. However, when we consider $\T\Mod$ for a (\emph{reflexive}, \ref{reflexive}) relational Horn theory $\T$ in \S\ref{exponentiable_Tmod} below, we shall need to define $|P|$ in (perhaps) the more expected way, with the first component of $(j, z) \in |P|$ being a $\Pi$-morphism $j : X_{f, z} \to Y$ (see Definition \ref{partial_product_unified} and Remark \ref{Tmod_partial_rmk}). 
}
\end{rmk}

\begin{prop}
\label{partial_product_prop_Str}
Let $f : X \to Z$ be a morphism of $\Pi$-structures, and let $Y$ be a $\Pi$-structure. Then the $\Pi$-structure $P = P(Y, f)$ of \eqref{partial_product_Str} is a partial product of $Y$ over $f$ in $\Str(\Pi)$.  
\end{prop}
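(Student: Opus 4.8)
The plan is to verify directly the universal property from \ref{partial_prod_para}. First I would confirm that $p : P \to Z$ and $\varepsilon : P \times_Z X \to Y$ as defined really are $\Pi$-morphisms. For $p$ this is immediate from the clause ``$Z \models Rz_1 \ldots z_n$'' built into the definition of $R^P$ in \ref{partial_product_Str}. For $\varepsilon$ one first unwinds, using the description of pullbacks in \ref{relational_prod}, that $\left|P \times_Z X\right| = \{((j,z),x) : z = f(x)\}$, so that $x \in f^{-1}(z)$ whenever $((j,z),x) \in \left|P \times_Z X\right|$; then if $P \times_Z X \models R((j_1,z_1),x_1)\ldots((j_n,z_n),x_n)$ we have both $P \models R(j_1,z_1)\ldots(j_n,z_n)$ and $X \models Rx_1 \ldots x_n$, and the second clause in the definition of $R^P$ gives exactly $Y \models R\, j_1(x_1) \ldots j_n(x_n)$, i.e.\ $Y \models R\, \varepsilon((j_1,z_1),x_1) \ldots \varepsilon((j_n,z_n),x_n)$.

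Next, given a test cone consisting of $q : Q \to Z$ and $g : Q \times_Z X \to Y$, I would define the comparison morphism $h : Q \to P$ on underlying sets by $h(q') := (j_{q'}, q(q'))$, where $j_{q'} : f^{-1}(q(q')) \to |Y|$ is the function $x \mapsto g(q',x)$; this is well defined because $(q',x) \in \left|Q \times_Z X\right|$ precisely when $f(x) = q(q')$. Then $p \circ h = q$ is immediate, and $\varepsilon \circ (h \times_Z 1_X) = g$ holds because both sides send $(q',x)$ to $g(q',x)$. Uniqueness on underlying sets is forced: $p \circ h = q$ determines the second coordinate of $h(q')$, and $\varepsilon \circ (h \times_Z 1_X) = g$ then determines the first coordinate to be the function $x \mapsto g(q',x)$.

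The one substantive point — and the step I would flag as the crux — is checking that this $h$ is a $\Pi$-morphism, i.e.\ $h \cdot \E(Q) \subseteq \E(P)$. So suppose $Q \models R\, q'_1 \ldots q'_n$; I must show $P \models R(j_{q'_1}, q(q'_1)) \ldots (j_{q'_n}, q(q'_n))$. The first requirement, $Z \models R\, q(q'_1) \ldots q(q'_n)$, follows because $q$ is a $\Pi$-morphism. For the second requirement, fix $x_i \in f^{-1}(q(q'_i))$ with $X \models R\, x_1 \ldots x_n$; then each $(q'_i, x_i)$ lies in $\left|Q \times_Z X\right|$, and combining $Q \models R\, q'_1 \ldots q'_n$ with $X \models R\, x_1 \ldots x_n$ gives $Q \times_Z X \models R(q'_1, x_1) \ldots (q'_n, x_n)$ by the description of pullbacks in \ref{relational_prod}. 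Applying the $\Pi$-morphism $g$ then yields $Y \models R\, g(q'_1, x_1) \ldots g(q'_n, x_n)$, which equals $R\, j_{q'_1}(x_1) \ldots j_{q'_n}(x_n)$ by definition of $j_{q'_i}$; this is exactly what the definition of $R^P$ demands. Hence $h$ is a $\Pi$-morphism, and together with the preceding paragraphs this establishes the universal property, so $P$ is a partial product of $Y$ over $f$. The argument is essentially bookkeeping; no serious obstacle arises once the underlying sets of the relevant pullbacks are spelled out.
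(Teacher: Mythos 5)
Your proof is correct and follows essentially the same route as the paper: the same definition of the comparison morphism $h(a) = (j_a, q(a))$ with $j_a(x) = g(a,x)$, the same verification that $h$ is a $\Pi$-morphism via the pullback description of $Q \times_Z X$, and the same uniqueness argument. The only difference is that you explicitly check that $p$ and $\varepsilon$ are $\Pi$-morphisms, which the paper leaves implicit in Definition \ref{partial_product_Str}; this is harmless and, if anything, slightly more complete.
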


\begin{proof}
We defined the required $\Pi$-morphisms $p : P \to Z$ and $\varepsilon : P \times_Z X \to Y$ in Definition \ref{partial_product_Str}. So let $q : Q \to Z$ and $g : Q \times_Z X \to Y$ be morphisms of $\Str(\Pi)$. We must show that there is a unique $\Pi$-morphism $h : Q \to P$ satisfying $p \circ h = q$ and $\varepsilon \circ \left(h \times_Z 1_X\right) = g$. So let $a \in |Q|$, and let us define $h(a) := (j_a, q(a)) \in |P|$. We have $q(a) \in |Z|$, and we must define a function $j_a : |X_{f, q(a)}| \to |Y|$. For each $x \in |X_{f, q(a)}| = f^{-1}(q(a))$ we have $f(x) = q(a)$, so that $(a, x) \in |Q \times_Z X|$, and we can then set $j_a(x) := g(a, x) \in |Y|$. We must now show that the function $h : |Q| \to |P|$ is a $\Pi$-morphism $h : Q \to P$. So let $R \in \Pi$ of arity $n \geq 1$, let $a_1, \ldots, a_n \in |Q|$, and suppose that $Q \models Ra_1 \ldots a_n$; we must show that $P \models Rh(a_1)\ldots h(a_n)$, i.e.~that $P \models R(j_{a_1}, q(a_1))\ldots(j_{a_n}, q(a_n))$. We first have $Z \models Rq(a_1)\ldots q(a_n)$ because $Q \models Ra_1 \ldots a_n$ and $q : Q \to Z$ is a $\Pi$-morphism. Now let $x_i \in f^{-1}(q(a_i))$ for each $1 \leq i \leq n$, suppose that $X \models Rx_1 \ldots x_n$, and let us show that $Y \models Rj_{a_1}(x_1)\ldots j_{a_n}(x_n)$, i.e.~that $Y \models Rg(a_1, x_1)\ldots g(a_n, x_n)$. But we have $Q \models Ra_1 \ldots a_n$ and thus $Q \times_Z X \models R(a_1, x_1)\ldots (a_n, x_n)$, and $g : Q \times_Z X \to Y$ is a $\Pi$-morphism. So $h : Q \to P$ is a $\Pi$-morphism, and we clearly have $p \circ h = q$ and $\varepsilon \circ \left(h \times_Z 1_X\right) = g$. 

To show the uniqueness of $h$, let $k : Q \to P$ be any $\Pi$-morphism satisfying $p \circ k = q$ and $\varepsilon \circ \left(k \times_Z 1_X\right) = g$, and let us show for each $a \in |Q|$ that $k(a) = h(a) = (j_a, q(a))$. Since $p \circ k = q$, we just need to show that $\pi_1(k(a)) = j_a : |X_{f, q(a)}| \to |Y|$, i.e.~that $\pi_1(k(a))(x) = j_a(x) = g(a, x)$ for each $x \in |X_{f, q(a)}|$; but this is true because $\varepsilon \circ \left(k \times_Z 1_X\right) = g$.      
\end{proof}

\noindent If $\Pi$ just contains a single binary relation symbol, then it is known that $\Str(\Pi)$ is a quasitopos (see e.g.~\cite[Examples 28.9(2)]{AHS}). We can now extend this result to $\Str(\Pi)$ for an arbitrary relational signature $\Pi$.

\begin{theo}
\label{Str_lcc}
$\Str(\Pi)$ is a quasitopos for every relational signature $\Pi$.
\end{theo}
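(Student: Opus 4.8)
The plan is to verify the three ingredients in the definition of quasitopos recalled in \ref{quasitopos_para}: that $\Str(\Pi)$ is finitely complete and cocomplete, that it is locally cartesian closed, and that it has a weak subobject classifier. The first and third are free: $\Str(\Pi)$ is topological over $\Set$ — this is the case $\T = \varnothing$ of \ref{relational_top}, and it is also the classical fact that categories of relational structures are topological — so by the discussion in \ref{quasitopos_para} it is complete and cocomplete and has a weak subobject classifier, obtained by putting the indiscrete structure (every relation holding on every tuple) on a two-element set. Hence the whole theorem reduces to showing that $\Str(\Pi)$ is locally cartesian closed.

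For local cartesian closure I would invoke \ref{partial_prod_para}: a morphism $f$ is exponentiable iff $\C$ has all partial products over $f$. Proposition \ref{partial_product_prop_Str} exhibits, for every $\Pi$-morphism $f : X \to Z$ and every $\Pi$-structure $Y$, an explicit partial product $P(Y,f)$ of $Y$ over $f$ in $\Str(\Pi)$. Therefore $\Str(\Pi)$ has all partial products over every morphism, so every morphism of $\Str(\Pi)$ is exponentiable, so $\Str(\Pi)$ is locally cartesian closed. Combined with the topologicity observations, this shows $\Str(\Pi)$ is a quasitopos.

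The one point that genuinely needs care — and the only place where the work has actually been done, in Proposition \ref{partial_product_prop_Str} — is the construction and verification of the partial product $P(Y,f)$; everything else is citation. So in the writeup the "main obstacle" is already dispatched upstream, and the proof of the theorem itself is essentially a two-line assembly: topologicity gives finite (co)completeness and the weak subobject classifier via \cite[III.4.J]{Monoidaltop}, Proposition \ref{partial_product_prop_Str} together with \cite[Lemma 2.1]{Dyckhoff} gives local cartesian closure, and \ref{quasitopos_para} packages these into the quasitopos conclusion. I would write it exactly in that order.
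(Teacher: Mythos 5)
Your proposal is correct and follows the paper's proof essentially verbatim: local cartesian closure via \ref{partial_prod_para} and Proposition \ref{partial_product_prop_Str}, and the remaining quasitopos ingredients from topologicity of $\Str(\Pi)$ over $\Set$ (the case of the empty theory in \ref{relational_top}) combined with \ref{quasitopos_para}. No gaps.
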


\begin{proof}
$\Str(\Pi)$ is locally cartesian closed by \ref{partial_prod_para} and Proposition \ref{partial_product_prop_Str}. Since the concrete category $\Str(\Pi)$ is topological over $\Set$ (by \ref{relational_top}, in view of Example \ref{without_equality_examples}.\ref{empty}), we then deduce from \ref{quasitopos_para} that $\Str(\Pi)$ is a quasitopos. 
\end{proof}

\section{Exponentiability in $\T\Mod$ for a relational Horn theory $\T$}
\label{exponentiable_Tmod}

We now consider exponentiability in $\T\Mod$ for a relational Horn theory $\T$. Unlike the situation for $\Str(\Pi)$ (see Theorem \ref{Str_lcc}), it is well known that $\T\Mod$ is in general not even cartesian closed, let alone locally cartesian closed or a quasitopos. For example (see Example \ref{without_equality_examples}), $\Preord$ and $\Pos$ are not locally cartesian closed, while the category $\Met$ of (extended) metric spaces (which is $\Met_\V$ for the \emph{Lawvere quantale} $\V$, see \cite[Example 3.7]{Extensivity}) is not even cartesian closed.    

\begin{defn}
\label{reflexive}
{\em
Let $\Pi$ be a relational signature. A $\Pi$-structure $X$ is \textbf{reflexive} if for each $R \in \Pi$ the relation $R^X$ is reflexive, i.e.~for each $x \in |X|$ we have $X \models R x \ldots x$. A relational Horn theory $\T$ is \textbf{reflexive} if every $\T$-model is reflexive.
}
\end{defn}

\begin{para}
\label{refl_para}
{\em
If $\T$ is a reflexive relational Horn theory, then $\T\Mod$ admits constant morphisms \eqref{concrete_para}. For if $X$ and $Y$ are $\T$-models and $h : |X| \to |Y|$ is a constant function, then $h$ is a $\Pi$-morphism $h : X \to Y$, because if $R \in \Pi$ of arity $n \geq 1$ and $X \models R x_1 \ldots x_n$, then $Y \models R h(x_1) \ldots h(x_n)$ because $h(x_1) = \ldots = h(x_n)$ and $Y$ is reflexive. More generally, any constant function from a $\Pi$-structure into a reflexive $\Pi$-structure is a $\Pi$-morphism. If $\T$ is reflexive, then $\T\Mod$ is clearly well-fibred \eqref{concrete_para}. 
}
\end{para}

\begin{defn}
\label{preord_sig}
{\em
Let $\Pi$ be a relational signature. We say that $\Pi$ is a \textbf{preordered relational signature} if for each $n \geq 1$, the set $\Pi(n)$ of relation symbols of arity $n$ is equipped with a preorder $\leq$ (i.e.~a reflexive and transitive binary relation). We say that a preordered relational signature $\Pi$ is \textbf{discrete} if each preordered set $\Pi(n)$ ($n \geq 1$) discrete, and we say that a preordered relational signature $\Pi$ is a \textbf{complete Heyting algebra} if each preordered set $\Pi(n)$ ($n \geq 1$) is a complete Heyting algebra (i.e.~a complete lattice in which binary meets distribute in each variable over arbitrary joins).
}
\end{defn}

\begin{egg}
\label{preord_sig_egg}
{\em
We consider the relational signatures of Examples \ref{without_equality_examples}.\ref{empty} and \ref{without_equality_examples}.\ref{preord} to be discrete, while if $(\V, \leq, \tensor, \sfk)$ is a commutative unital quantale with the associated relational signature $\Pi_\V$ of Example \ref{without_equality_examples}.\ref{quantale}, then we equip the set $\Pi_\V(2)$ with a preorder that is generally \emph{not} discrete: for $v, v' \in \V$, we set $\sim_v \ \leq \ \sim_{v'}$ iff $v \leq v'$. If $(\V, \leq)$ is a complete Heyting algebra, then the relational signature $\Pi_\V$ is a complete Heyting algebra.
}
\end{egg}

\noindent We fix a preordered relational signature $\Pi$ for the remainder of this section.   

\begin{defn}
\label{T_Pi}
{\em
We let $\T_\Pi$ be the relational Horn theory (without equality) over $\Pi$ that consists of the axioms $\Longrightarrow Rv\ldots v$ for all $R \in \Pi$, as well as the axioms $Rv_1\ldots v_n \Longrightarrow S v_1 \ldots v_n$ for all $R, S \in \Pi(n)$ ($n \geq 1$) such that $R \geq S$, where $v_1, \ldots, v_n$ are pairwise distinct variables. If $\Pi$ is a complete Heyting algebra, then we also stipulate that $\T_\Pi$ contains the axiom $\left\{R_i v_1 \ldots v_n \mid i \in I\right\} \Longrightarrow \left(\bigvee_i R_i\right)v_1 \ldots v_n$ for each $n \geq 1$ and small family $(R_i)_{i \in I}$ in $\Pi(n)$, where $v_1, \ldots, v_n$ are again pairwise distinct variables.

In particular, the relational Horn theory $\T_\Pi$ is reflexive \eqref{reflexive}. A model of $\T_\Pi$ is a reflexive $\Pi$-structure $X$ such that $R^X \subseteq S^X$ for all $R, S \in \Pi(n)$ ($n \geq 1$) with $R \geq S$; if $\Pi$ is a complete Heyting algebra, then also $\bigcap_i R_i^X \subseteq \left(\bigvee_i R_i\right)^X$ for each $n \geq 1$ and $(R_i)_{i \in I}$ in $\Pi(n)$.
}
\end{defn} 

\begin{defn}
\label{partial_product_unified}
{\em
Let $f : X \to Z$ be a morphism of $\T_\Pi$-models, and let $Y$ be a $\T_\Pi$-model. We define a $\Pi$-structure $P = P(Y, f)$ as follows. We set
\[ |P| := \left\{(j, z) \mid z \in |Z| \text{ and } j \in \Str(\Pi)\left(X_{f, z}, Y\right)\right\}. \] Now let $R \in \Pi(n)$ ($n \geq 1$) and let $(j_1, z_1), \ldots, (j_n, z_n) \in |P|$. Then we set
\[ P \models R(j_1, z_1) \ldots (j_n, z_n) \] iff $Z \models Rz_1 \ldots z_n$ and for all $x_1 \in f^{-1}(z_1), \ldots, x_n \in f^{-1}(z_n)$ and $S \in \Pi(n)$ with $R \geq S$ and $X \models Sx_1 \ldots x_n$, we have $Y \models Sj_1(x_1)\ldots j_n(x_n)$. We then have a $\Pi$-morphism $p : P \to Z$ defined by $p(j, z) := z$ for each $(j, z) \in |P|$, and a $\Pi$-morphism $\varepsilon : P \times_Z X \to Y$ defined by $\varepsilon((j, z), x) := j(x)$ for each $((j, z), x) \in |P \times_Z X|$. 
}
\end{defn}

\begin{rmk}
\label{Tmod_partial_rmk}
{\em
Since $\T_\Pi$ is reflexive, it follows that the forgetful functor $|-| : \T_\Pi\Mod \to \Set$ is represented by the terminal object $1$ \eqref{relational_prod}, so that for each $\T_\Pi$-model $X$ we have a natural bijection $|X| \cong \Str(\Pi)(1, X)$. If we want $P$ (in Definition \ref{partial_product_unified}) to be a partial product of $Y$ over $f$, then by setting $Q := 1$ in the definition of partial product, we see that $|P| \cong \Str(\Pi)(1, P)$ must be isomorphic to the set given in Definition \ref{partial_product_unified} (compare the situation for $\Str(\Pi)$ in Remark \ref{Pi_partial_rmk}).
}
\end{rmk} 

\noindent Without any assumption on $f : X \to Z$, for each $\T_\Pi$-model $Y$ the $\Pi$-structure $P(Y, f)$ of Definition \ref{partial_product_unified} is automatically a model of $\T_\Pi$:

\begin{prop}
\label{partial_product_is_model}
Let $f : X \to Z$ be a morphism of $\T_\Pi$-models, and let $Y$ be a $\T_\Pi$-model. Then the $\Pi$-structure $P = P(Y, f)$ of \eqref{partial_product_unified} is a model of $\T_\Pi$. 
\end{prop}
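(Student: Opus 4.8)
The goal is to check that $P = P(Y,f)$ satisfies every axiom of $\T_\Pi$, of which there are up to three families: the reflexivity axioms $\Longrightarrow Rv\ldots v$, the ``monotonicity'' axioms $Rv_1\ldots v_n \Longrightarrow Sv_1\ldots v_n$ for $R \geq S$, and (when $\Pi$ is a complete Heyting algebra) the ``join'' axioms $\{R_iv_1\ldots v_n \mid i\in I\} \Longrightarrow (\bigvee_i R_i)v_1\ldots v_n$. In each case we unwind the definition of $P\models R(j_1,z_1)\ldots(j_n,z_n)$ from Definition \ref{partial_product_unified} and reduce the assertion for $P$ to the corresponding assertion for $Z$ and for $Y$, both of which hold because $Z$ and $Y$ are $\T_\Pi$-models. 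The plan is to treat the three families in turn.

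For reflexivity: given $(j,z)\in |P|$ we must show $P\models R(j,z)\ldots(j,z)$. The first condition, $Z\models Rz\ldots z$, holds since $Z$ is reflexive. For the second condition, take $x_1,\ldots,x_n \in f^{-1}(z)$ with $S\in\Pi(n)$, $R\geq S$, and $X\models Sx_1\ldots x_n$; since $j : X_{f,z}\to Y$ is a $\Pi$-morphism and all the $x_i$ lie in $|X_{f,z}| = f^{-1}(z)$ with $X_{f,z}\models Sx_1\ldots x_n$ (by definition of the induced structure, as $(x_1,\ldots,x_n)\in S^X\cap |X_{f,z}|^n$), we get $Y\models Sj(x_1)\ldots j(x_n)$, as needed. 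For monotonicity: suppose $R\geq S$ in $\Pi(n)$ and $P\models R(j_1,z_1)\ldots(j_n,z_n)$; we want $P\models S(j_1,z_1)\ldots(j_n,z_n)$. From $Z\models Rz_1\ldots z_n$ and $R\geq S$ we get $Z\models Sz_1\ldots z_n$ since $Z\models\T_\Pi$. For the second condition of $P\models S(\ldots)$, take $x_i\in f^{-1}(z_i)$ and $S'\in\Pi(n)$ with $S\geq S'$ and $X\models S'x_1\ldots x_n$; by transitivity of $\leq$ we have $R\geq S'$, so the hypothesis $P\models R(j_1,z_1)\ldots(j_n,z_n)$ applied to this $S'$ yields $Y\models S'j_1(x_1)\ldots j_n(x_n)$. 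For the join axioms (when $\Pi$ is a complete Heyting algebra): suppose $P\models R_i(j_1,z_1)\ldots(j_n,z_n)$ for all $i\in I$; write $R := \bigvee_i R_i$. Then $Z\models R_iz_1\ldots z_n$ for all $i$, so $Z\models Rz_1\ldots z_n$ since $Z\models\T_\Pi$. For the second condition, take $x_k\in f^{-1}(z_k)$ and $S\in\Pi(n)$ with $R\geq S$ and $X\models Sx_1\ldots x_n$; we must show $Y\models Sj_1(x_1)\ldots j_n(x_n)$.

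The main obstacle is precisely this last step: from $R = \bigvee_i R_i \geq S$ we cannot in general conclude $R_i \geq S$ for any single $i$, so we cannot directly invoke the hypothesis $P\models R_i(\ldots)$. The fix is to use the complete Heyting algebra structure: set $S_i := R_i \wedge S$. Then $\bigvee_i S_i = \bigvee_i (R_i\wedge S) = (\bigvee_i R_i)\wedge S = R\wedge S = S$ by the distributivity of binary meets over arbitrary joins. Now $R_i \geq S_i$, so applying $P\models R_i(j_1,z_1)\ldots(j_n,z_n)$ to $S_i$ (note $X\models S_ix_1\ldots x_n$ since $S_i\leq S$ and $X$, being a $\T_\Pi$-model, satisfies the monotonicity closure $S^X_i \supseteq S^X$... wait, the containment goes the other way: $S_i\leq S$ gives $S^X\subseteq S_i^X$, so $X\models S x_1\ldots x_n$ indeed implies $X\models S_i x_1\ldots x_n$) gives $Y\models S_ij_1(x_1)\ldots j_n(x_n)$ for each $i$. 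Since $Y\models\T_\Pi$ and $S = \bigvee_i S_i$, the join axiom for $Y$ yields $Y\models Sj_1(x_1)\ldots j_n(x_n)$, completing the argument. Throughout I would be careful that the $x_i$ genuinely lie in the domains of the relevant $j_i$ (they do, since $x_i\in f^{-1}(z_i) = |X_{f,z_i}|$) and that the induced $\Pi$-structure on $X_{f,z_i}$ behaves as expected under the relations $S'\leq R$.
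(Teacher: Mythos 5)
Your proposal is correct and follows essentially the same route as the paper's proof: reflexivity via the fact that each $j : X_{f,z} \to Y$ is a $\Pi$-morphism, monotonicity via transitivity of the preorder on $\Pi(n)$, and the join axioms via the distributivity identity $S = \bigvee_i (R_i \wedge S)$ together with the monotonicity and join axioms holding in $X$ and $Y$. The only difference is cosmetic: you spell out explicitly the step the paper dismisses as "readily follows," and your mid-proof correction about the direction of the containment ($S \geq S_i$ gives $S^X \subseteq S_i^X$) lands on the right statement.
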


\begin{proof}
To verify that $P$ is reflexive, let $R \in \Pi(n)$ ($n \geq 1$) and let $(j, z) \in |P|$; we must show that $P \models R(j, z)\ldots(j, z)$. Since the $\T_\Pi$-model $Z$ is reflexive, we have $Z \models Rz\ldots z$. Now let $x_1, \ldots, x_n \in f^{-1}(z)$, and let $S \in \Pi(n)$ with $R \geq S$ and $X \models Sx_1 \ldots x_n$. Because $j : X_{f, z} \to Y$ is a $\Pi$-morphism and $X_{f, z} \models S x_1\ldots x_n$, we then have $Y \models Sj(x_1)\ldots j(x_n)$. 

Now let $R, S \in \Pi(n)$ ($n \geq 1$) with $R \geq S$, and suppose that $P \models R(j_1, z_1)\ldots (j_n, z_n)$; we must show that $P \models S(j_1, z_1) \ldots (j_n, z_n)$. We first have $Z \models S z_1 \ldots z_n$ because $Z$ is a $\T_\Pi$-model and $Z \models R z_1 \ldots z_n$. Now let $x_i \in f^{-1}(z_i)$ for each $1 \leq i \leq n$, let $T \in \Pi(n)$ with $S \geq T$ and $X \models Tx_1 \ldots x_n$, and let us show that $Y \models Tj_1(x_1) \ldots j_n(x_n)$. From $R \geq S$ and $S \geq T$ we obtain $R \geq T$ by transitivity of $\Pi(n)$, and then from $P \models R(j_1, z_1)\ldots(j_n, z_n)$ we obtain $Y \models Tj_1(x_1)\ldots j_n(x_n)$, as desired.

Suppose finally that $\Pi$ is a complete Heyting algebra, let $n \geq 1$ and let $(R_i)_{i \in I}$ be a small family in $\Pi(n)$, and suppose that $P \models R_i(j_1, z_1)\ldots(j_n, z_n)$ for each $i \in I$. To show that $P \models \left(\bigvee_i R_i\right)(j_1, z_1)\ldots(j_n, z_n)$, we first have $Z \models \left(\bigvee_i R_i\right)z_1\ldots z_n$ because $Z \models R_i z_1 \ldots z_n$ for each $i \in I$ and $Z$ is a $\T_\Pi$-model. Now let $x_i \in f^{-1}(z_i)$ for each $1 \leq i \leq n$, let $S \in \Pi(n)$ with $\bigvee_i R_i \geq S$ and $X \models Sx_1 \ldots x_n$, and let us show that $Y \models Sj_1(x_1)\ldots j_n(x_n)$. We have $S = S \wedge \bigvee_i R_i = \bigvee_i (S \wedge R_i)$ because $\Pi(n)$ is a complete Heyting algebra. Because $Y$ is a $\T_\Pi$-model, it then suffices to show that $Y \models (S \wedge R_i)j_1(x_1) \ldots j_n(x_n)$ for each $i \in I$, which readily follows from $P \models R_i(j_1, z_1)\ldots(j_n, z_n)$ and the fact that $X$ is a $\T_\Pi$-model.           
\end{proof}

\noindent We say that a relational Horn theory $\T$ is an \emph{extension of $\T_\Pi$ over $\Pi$} if $\T$ is a relational Horn theory over $\Pi$ whose set of axioms contains the axioms of $\T_\Pi$ (so that each model of $\T$ is a model of $\T_\Pi$). 

\begin{prop}
\label{partial_product_prop_unified}
Let $\T$ be any extension of $\T_\Pi$ over $\Pi$, let $f : X \to Z$ be a morphism of $\T\Mod$, let $Y$ be a $\T$-model, and suppose that the $\Pi$-structure $P = P(Y, f)$ of \eqref{partial_product_unified} is a $\T$-model. Then $P$ is a partial product of $Y$ over $f$ in $\T\Mod$.  
\end{prop}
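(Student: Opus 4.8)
The plan is to imitate the proof of Proposition \ref{partial_product_prop_Str} almost verbatim, the only genuinely new ingredient being a check that the mediating morphism we construct actually lands in $\T\Mod$ — which, since $P$ is assumed to be a $\T$-model, reduces to verifying that the first components of that morphism are $\Pi$-morphisms $X_{f,z} \to Y$ and not merely functions. First I would note that $p$ and $\varepsilon$ are $\Pi$-morphisms, as asserted in Definition \ref{partial_product_unified}: for $p$ this is immediate from the defining clause $Z \models Rz_1 \ldots z_n$, and for $\varepsilon$ one takes $S := R$ (legitimate since $R \geq R$) in the defining clause of $P \models R(j_1,z_1)\ldots(j_n,z_n)$, using the description of the pullback $P \times_Z X$ from \ref{relational_prod}.

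Now let $q : Q \to Z$ and $g : Q \times_Z X \to Y$ be morphisms of $\T\Mod$; in particular $Q$ is a $\T_\Pi$-model. I would define $h : |Q| \to |P|$ by $h(a) := (j_a, q(a))$, where $j_a : |X_{f,q(a)}| \to |Y|$ is given by $j_a(x) := g(a,x)$ (well defined since $x \in f^{-1}(q(a))$ forces $(a,x) \in |Q \times_Z X|$). The one new step is to check that each $j_a$ is a $\Pi$-morphism $X_{f,q(a)} \to Y$, so that $(j_a, q(a)) \in |P|$ in the sense of Definition \ref{partial_product_unified}: if $X_{f,q(a)} \models Rx_1 \ldots x_n$, then since $Q$ is reflexive we have $Q \models Ra\ldots a$, hence $Q \times_Z X \models R(a,x_1)\ldots(a,x_n)$ by \ref{relational_prod}, and applying the $\Pi$-morphism $g$ gives $Y \models Rj_a(x_1)\ldots j_a(x_n)$. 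Next I would verify that $h : Q \to P$ is a $\Pi$-morphism: given $Q \models Ra_1\ldots a_n$, the clause $Z \models Rq(a_1)\ldots q(a_n)$ holds because $q$ is a $\Pi$-morphism, and for the remaining clause, given $x_i \in f^{-1}(q(a_i))$ and $S \in \Pi(n)$ with $R \geq S$ and $X \models Sx_1\ldots x_n$, I use that $Q$ is a $\T_\Pi$-model to pass from $Q \models Ra_1\ldots a_n$ and $R \geq S$ to $Q \models Sa_1\ldots a_n$, whence $Q \times_Z X \models S(a_1,x_1)\ldots(a_n,x_n)$, and $g$ being a $\Pi$-morphism yields $Y \models Sj_{a_1}(x_1)\ldots j_{a_n}(x_n)$. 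The identities $p \circ h = q$ and $\varepsilon \circ (h \times_Z 1_X) = g$ are immediate from the definitions, and uniqueness of $h$ follows exactly as in Proposition \ref{partial_product_prop_Str}: for any competitor $k$, the equation $p \circ k = q$ forces the second component of $k(a)$ to be $q(a)$, and $\varepsilon \circ (k \times_Z 1_X) = g$ forces $\pi_1(k(a))(x) = g(a,x) = j_a(x)$. Since $P$ is a $\T$-model by hypothesis, this establishes the universal property of \ref{partial_prod_para} inside $\T\Mod$, so $P$ is a partial product of $Y$ over $f$ in $\T\Mod$.

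I do not expect any serious obstacle: the argument is essentially the $\Str(\Pi)$ proof with the forgetful functor now represented by $1$ rather than $I$ (cf.\ Remarks \ref{Pi_partial_rmk} and \ref{Tmod_partial_rmk}). The only place requiring care — and precisely the reason the hypothesis that $\T$ extends $\T_\Pi$ is invoked — is the verification that each $j_a$ is a $\Pi$-morphism and that $h$ respects the comparison structure, both of which use that $Q$, being a $\T$-model and hence a $\T_\Pi$-model, is reflexive and closed under the axioms $Rv_1\ldots v_n \Longrightarrow Sv_1\ldots v_n$ for $R \geq S$.
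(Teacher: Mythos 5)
Your proposal is correct and follows essentially the same route as the paper's proof: reuse the $\Str(\Pi)$ argument from Proposition \ref{partial_product_prop_Str}, with the two new checks being that each $j_a$ is a $\Pi$-morphism $X_{f,q(a)} \to Y$ (via reflexivity of the $\T_\Pi$-model $Q$) and that $h$ respects the modified membership condition on $P$ (via $Q \models Sa_1\ldots a_n$ for $S \leq R$, since $Q$ is a $\T_\Pi$-model). These are exactly the points the paper's proof makes, so there is nothing to add.
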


\begin{proof}
We defined the required $\Pi$-morphisms $p : P \to Z$ and $\varepsilon : P \times_Z X \to Y$ in Definition \ref{partial_product_unified}. The proof is now almost identical to that of Proposition \ref{partial_product_prop_Str}. Using the notation of that proof, given morphisms $q : Q \to Z$ and $g : Q \times_Z X \to Y$ of $\T\Mod$, we must first show for each $a \in |Q|$ that the function $j_a : |X_{f, q(a)}| \to |Y|$ defined by $j_a(x) := g(a, x)$ for each $x \in f^{-1}(q(a))$ is a $\Pi$-morphism $j_a : X_{f, q(a)} \to Y$. So let $R \in \Pi$ of arity $n \geq 1$, let $x_1, \ldots, x_n \in f^{-1}(q(a))$, and suppose that $X_{f, q(a)} \models Rx_1 \ldots x_n$. Then $f(x_i) = q(a)$ and hence $(a, x_i) \in |Q \times_Z X|$ for each $1 \leq i \leq n$, and moreover $X \models Rx_1 \ldots x_n$. Since $Q \models Ra\ldots a$ by reflexivity of $\T_\Pi$, we then obtain $Q \times_Z X \models R(a, x_1)\ldots(a, x_n)$, and then because $g : Q \times_Z X \to Y$ is a $\Pi$-morphism, we deduce that $Y \models Rg(a, x_1)\ldots g(a, x_n)$, i.e.~that $Y \models Rj_a(x_1)\ldots j_a(x_n)$, as desired. 

We must also show that the function $h : |Q| \to |P|$ defined by $h(a) := (j_a, q(a))$ for each $a \in |Q|$ is a $\Pi$-morphism $h : Q \to P$. So let $R \in \Pi$ of arity $n \geq 1$, let $a_1, \ldots, a_n \in |Q|$, and suppose that $Q \models Ra_1 \ldots a_n$; we must show that $P \models Rh(a_1)\ldots h(a_n)$, i.e.~that $P \models R(j_{a_1}, q(a_1))\ldots(j_{a_n}, q(a_n))$. We first have $Z \models Rq(a_1)\ldots q(a_n)$ because $Q \models Ra_1 \ldots a_n$ and $q : Q \to Z$ is a $\Pi$-morphism. Now let $x_i \in f^{-1}(q(a_i))$ for each $1 \leq i \leq n$, let $S \in \Pi(n)$ satisfy $R \geq S$ and $X \models Sx_1 \ldots x_n$, and let us show that $Y \models Sj_{a_1}(x_1)\ldots j_{a_n}(x_n)$, i.e.~that $Y \models Sg(a_1, x_1)\ldots g(a_n, x_n)$. But we have $Q \models Sa_1 \ldots a_n$ (because $Q$ is a $\T_\Pi$-model) and thus $Q \times_Z X \models S(a_1, x_1)\ldots (a_n, x_n)$, and so the result follows because $g : Q \times_Z X \to Y$ is a $\Pi$-morphism.      
\end{proof}

For an extension $\T$ of $\T_\Pi$ over $\Pi$ and a morphism $f : X \to Z$ of $\T\Mod$, Proposition \ref{partial_product_prop_unified} entails (in view of \ref{partial_prod_para}) that $f$ will be exponentiable if, for each $\T$-model $Y$, the $\Pi$-structure $P(Y, f)$ of Definition \ref{partial_product_unified} is a model of $\T$ (note that it is already a model of $\T_\Pi$ by Proposition \ref{partial_product_is_model}). In \S\ref{disc_section} and \S\ref{non_disc_section} we shall turn to identifying a sufficient condition on $f$, which we call \emph{convexity}, that will entail this.  

\section{Convexity in the discrete case}
\label{disc_section}

We shall first suppose (throughout this section) that $\T$ is a reflexive relational Horn theory \eqref{reflexive} over a \emph{discrete} relational signature $\Pi$; in \S\ref{non_disc_section} we shall consider the case where $\Pi$ is not necessarily discrete. Note that when $\Pi$ is discrete, then we can just take the axioms of $\T_\Pi$ to be $\Longrightarrow Rv\ldots v$ for all $R \in \Pi$, since the axiom $Rv_1 \ldots v_n \Longrightarrow Rv_1 \ldots v_n$ (for $n \geq 1$ and $R \in \Pi(n)$) is (of course) automatically satisfied by every $\Pi$-structure. Thus, a relational Horn theory $\T$ over $\Pi$ is an extension of $\T_\Pi$ iff it is reflexive\footnote{Technically, if $\T$ is reflexive, then the relational Horn formulas $\Longrightarrow Rv\ldots v$ for $R \in \Pi$ need not be axioms of $\T$, so that $\T$ need not be an extension of $\T_\Pi$ as we have defined this concept; but we can clearly assume w.l.o.g.~that these relational Horn formulas \emph{are} axioms of $\T$.}. 

For a reflexive relational Horn theory $\T$ over the discrete relational signature $\Pi$, we now identify a useful sufficient condition for a morphism of $\T\Mod$ to be exponentiable (see Theorem \ref{convex_exp_unified}). We write $\T\setminus \T_\Pi$ for the relational Horn theory over $\Pi$ whose axioms are the axioms of $\T$ that do not belong to $\T_\Pi$, i.e.~the axioms of $\T$ other than the reflexivity axioms $\Longrightarrow Rv\ldots v$ ($R \in \Pi$).

\begin{defn}
\label{convex_unified}
{\em
Let $f : X \to Z$ be a morphism of $\T\Mod$, and let $\Phi \Longrightarrow Rv_1 \ldots v_n$ be an axiom of $\T\setminus \T_\Pi$ without equality. We say that $f$ is \textbf{convex with respect to (the axiom) $\Phi \Longrightarrow Rv_1\ldots v_n$} if $f$ satisfies the following condition: 

Let $\kappa_Z : \Var \to |Z|$ be a valuation such that $Z \models \kappa_Z \cdot \varphi$ for each $\varphi \in \Phi$. Let $x_i \in f^{-1}(\kappa_Z(v_i))$ for each $1 \leq i \leq n$, and suppose that $X \models Rx_1 \ldots x_n$. Then there is a valuation $\kappa : \Var \to |X|$ such that $\kappa(v_i) = x_i$ for each $1 \leq i \leq n$ and $\kappa(v) \in f^{-1}(\kappa_Z(v))$ for each $v \in \Var(\Phi)\setminus\{v_1, \ldots, v_n\}$ and $X \models \kappa \cdot \varphi$ for each $\varphi \in \Phi$. 

We say that $f$ is \textbf{convex} if it is convex with respect to each axiom of $\T\setminus \T_\Pi$ without equality.
}  
\end{defn}

\begin{egg}
\label{convex_egg}
{\em
Let $\T$ be the reflexive relational Horn theory for preordered (partially ordered) sets (see Example \ref{without_equality_examples}.\ref{preord}), and let $f : X \to Z$ be a morphism of $\T\Mod = \Preord$ ($\T\Mod = \Pos$), i.e.~a monotone function between preordered (partially ordered) sets. The only axiom of $\T\setminus \T_\Pi$ without equality is the transitivity axiom $x \leq y, y \leq z \Longrightarrow x \leq z$, and one readily sees that $f$ is convex (with respect to this axiom) iff whenever we have $x_1, x_3 \in |X|$ and $z_2 \in |Z|$ satisfying $x_1 \leq x_3$ and $f(x_1) \leq z_2 \leq f(x_3)$, there is some $x_2 \in f^{-1}(z_2)$ such that $x_1 \leq x_2 \leq x_3$. So $f$ is convex iff $f$ is an \emph{interpolation-lifting map} in the sense of \cite[Definition 2.1]{Exp_morphisms}.  
}
\end{egg}

\begin{rmk}
\label{lifting_rmk}
{\em
The notion of convexity can be understood in terms of certain \emph{lifting properties} as follows. Let $\Phi$ be a set of $\Pi$-edges in $\Var$. We let $\Phi_\T$ be the free $\T$-model \eqref{relational_top} on the $\Pi$-structure $\Phi_\Pi$ defined by $\left|\Phi_\Pi\right| := \Var(\Phi)$ and $\E\left(\Phi_\Pi\right) := \Phi$, so that $\Phi_\T \models \varphi$ for each $\varphi \in \Phi$. For each $\T$-model $X$, we have that $\Pi$-morphisms $\Phi_\T \to X$ are in natural bijective correspondence with $\Pi$-morphisms $\Phi_\Pi \to X$, which in turn are in natural bijective correspondence with functions $\kappa : \Var(\Phi) \to |X|$ satisfying $X \models \kappa \cdot \varphi$ for each $\varphi \in \Phi$. In particular, for any relation symbol $R \in \Pi(n)$ ($n \geq 1$) and pairwise distinct variables $v_1, \ldots, v_n \in \Var$, we write $R_\T := \{(R, (v_1, \ldots, v_n))\}_\T$, and $\Pi$-morphisms $R_\T \to X$ are then in natural bijective correspondence with $n$-tuples $(x_1, \ldots, x_n) \in |X|^n$ such that $X \models Rx_1\ldots x_n$. 

Now let $f : X \to Z$ be a morphism of $\T\Mod$, and let $\Phi \Longrightarrow Rv_1\ldots v_n$ be an axiom of $\T \setminus \T_\Pi$ without equality. We write $\left(\Phi \Longrightarrow Rv_1\ldots v_n\right)_\T$ for the $\T$-model $\left(\Phi \cup \{(R, (v_1, \ldots, v_n))\}\right)_\T$. Since $\left(\Phi \Longrightarrow Rv_1\ldots v_n\right)_\T \models Rv_1\ldots v_n$, there is a corresponding canonical $\Pi$-morphism $f_{\Phi, R} : R_\T \to \left(\Phi \Longrightarrow Rv_1\ldots v_n\right)_\T$. Then it readily follows that $f$ is convex with respect to the axiom $\Phi \Longrightarrow Rv_1\ldots v_n$ iff for each outer commutative square of the form 
\[\begin{tikzcd}
	{R_\T} && X \\
	\\
	{\left(\Phi \Longrightarrow Rv_1\ldots v_n\right)_\T} && Z,
	\arrow[from=1-1, to=1-3]
	\arrow["f", from=1-3, to=3-3]
	\arrow["{f_{\Phi, R}}"', from=1-1, to=3-1]
	\arrow[from=3-1, to=3-3]
	\arrow[dashed, from=3-1, to=1-3]
\end{tikzcd}\]
there is a (not necessarily unique) diagonal filler $\left(\Phi \Longrightarrow Rv_1\ldots v_n\right)_\T \to X$, which means that $f$ has the \emph{(weak) right lifting property} with respect to $f_{\Phi, R} : R_\T \to \left(\Phi \Longrightarrow Rv_1\ldots v_n\right)_\T$.  So $f$ is convex iff for each axiom $\Phi \Longrightarrow Rv_1\ldots v_n$ of $\T \setminus \T_\Pi$ without equality, $f$ has the weak right lifting property with respect to the canonical $\Pi$-morphism $f_{\Phi, R} : R_\T \to \left(\Phi \Longrightarrow Rv_1\ldots v_n\right)_\T$.   
}
\end{rmk}

\begin{theo}
\label{convex_thm_unified}
Let $f : X \to Z$ be a convex morphism of $\T\Mod$, and let $Y$ be a $\T$-model. Then the $\Pi$-structure $P = P(Y, f)$ of \eqref{partial_product_unified} is a $\T$-model, so that $P$ is a partial product of $Y$ over $f$ in $\T\Mod$ \eqref{partial_product_prop_unified}. 
\end{theo}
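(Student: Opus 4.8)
The plan is to verify directly that $P = P(Y,f)$ satisfies every axiom of $\T$. By Proposition \ref{partial_product_is_model} it already models $\T_\Pi$, which, $\Pi$ being discrete, just means that $P$ is reflexive; so it remains to verify the axioms of $\T\setminus\T_\Pi$, after which Proposition \ref{partial_product_prop_unified} immediately gives that $P$ is a partial product of $Y$ over $f$. I would therefore fix an axiom $\Phi \Longrightarrow \psi$ of $\T\setminus\T_\Pi$ together with a valuation $\kappa_P : \Var \to |P|$ with $P \models \kappa_P \cdot \varphi$ for each $\varphi \in \Phi$, and write $\kappa_P(v) = (j_v, z_v)$, so each $j_v : X_{f, z_v} \to Y$ is a $\Pi$-morphism. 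Composing $\kappa_P$ with $p : P \to Z$ yields a valuation $\kappa_Z := p \circ \kappa_P$ in $Z$ with $\kappa_Z(v) = z_v$; since $p$ is a $\Pi$-morphism we get $Z \models \kappa_Z \cdot \varphi$ for each $\varphi \in \Phi$, and hence, as $Z$ is a $\T$-model, $\overline{Z} \models \kappa_Z \cdot \psi$. The goal is $\overline{P} \models \kappa_P \cdot \psi$, which I would establish by splitting according to whether $\psi$ contains equality.

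If $\psi \equiv Rv_1\ldots v_n$ has no equality, then $\overline{Z} \models \kappa_Z \cdot \psi$ already provides $Z \models R z_{v_1}\ldots z_{v_n}$, the first half of the membership $P \models R(j_{v_1}, z_{v_1})\ldots(j_{v_n}, z_{v_n})$. By Definition \ref{partial_product_unified} (in which, $\Pi$ being discrete, ``$S$ with $R \geq S$'' reduces to $S = R$), it remains to show: whenever $x_i \in f^{-1}(z_{v_i})$ satisfy $X \models Rx_1\ldots x_n$, one has $Y \models R\, j_{v_1}(x_1)\ldots j_{v_n}(x_n)$. This is exactly the situation convexity of $f$ with respect to $\Phi \Longrightarrow Rv_1\ldots v_n$ is built for: it furnishes a valuation $\kappa$ in $X$ with $\kappa(v_i) = x_i$, with $\kappa(v) \in f^{-1}(z_v)$ for $v \in \Var(\Phi)$, and with $X \models \kappa \cdot \varphi$ for each $\varphi \in \Phi$. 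I would then define a valuation $\lambda$ in $Y$ by $\lambda(v) := j_v(\kappa(v))$, which makes sense since $\kappa(v)$ lies in the domain $f^{-1}(z_v)$ of $j_v$; for each $\varphi \equiv S w_1 \ldots w_m \in \Phi$, applying the clause defining $P \models \kappa_P \cdot \varphi$ to the tuple $(\kappa(w_1), \ldots, \kappa(w_m))$ — which lies in the relevant fibres and satisfies $X \models S \kappa(w_1)\ldots\kappa(w_m)$ — yields $Y \models S\, j_{w_1}(\kappa(w_1))\ldots j_{w_m}(\kappa(w_m))$, i.e.\ $Y \models \lambda \cdot \varphi$. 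Since $Y$ is a $\T$-model satisfying $\Phi \Longrightarrow Rv_1\ldots v_n$, we conclude $Y \models \lambda \cdot (Rv_1\ldots v_n)$, which is precisely $Y \models R\, j_{v_1}(x_1)\ldots j_{v_n}(x_n)$, as wanted.

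If $\psi \equiv v_1 = v_2$, then the standing assumption on equality axioms gives $\Var(\Phi) = \{v_1, v_2\}$, and I must show $\kappa_P(v_1) = \kappa_P(v_2)$, i.e.\ $z_{v_1} = z_{v_2} =: z$ and $j_{v_1} = j_{v_2}$ as $\Pi$-morphisms $X_{f,z} \to Y$. The equality $z_{v_1} = z_{v_2}$ is immediate from $\overline{Z} \models \kappa_Z \cdot (v_1 = v_2)$. For $j_{v_1} = j_{v_2}$, I would fix $x \in f^{-1}(z)$ and observe that the constant valuation $v_1, v_2 \mapsto x$ satisfies $X \models S x \ldots x$ for every $\varphi \equiv S w_1 \ldots w_m \in \Phi$, by reflexivity of $X$ and because $w_1, \ldots, w_m \in \{v_1, v_2\}$; feeding the constant tuple $(x, \ldots, x)$ into the clause defining $P \models \kappa_P \cdot \varphi$ then gives $Y \models S\, j_{w_1}(x)\ldots j_{w_m}(x)$ for each such $\varphi$. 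Hence the valuation $v_1 \mapsto j_{v_1}(x)$, $v_2 \mapsto j_{v_2}(x)$ satisfies all of $\Phi$ in $Y$, and since $Y$ is a $\T$-model it satisfies $v_1 = v_2$, forcing $j_{v_1}(x) = j_{v_2}(x)$; as $x \in f^{-1}(z)$ was arbitrary, $j_{v_1} = j_{v_2}$.

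I expect the hard part to be the core of the equality-free case: transporting the whole family of hypotheses across the two layers of the partial product — first lifting $\kappa_Z$ along $f$ to a valuation in $X$, which is exactly what convexity provides, and then pushing that valuation forward through the family $(j_v)$ of (generally distinct) $\Pi$-morphisms into $Y$ — while checking that every $\varphi \in \Phi$ survives both transports, the second being governed by the definition of the relations of $P$. A point to watch is the case where the conclusion $Rv_1\ldots v_n$ repeats a variable (so distinct $x_i, x_j$ could arise with $\kappa_P(v_i) = \kappa_P(v_j)$); this requires only minor additional bookkeeping, and is a non-issue for the axioms in Example \ref{without_equality_examples}, whose conclusions use pairwise distinct variables. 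The remaining verifications are routine unwindings of the definitions of $P$, $\overline{X}$, and of $\T$-models.
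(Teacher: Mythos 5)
Your proposal is correct and takes essentially the same route as the paper's proof: obtain the $Z$-component via $p$, use convexity to lift the valuation along $f$ to $X$, push the premises into $Y$ and invoke that $Y$ is a $\T$-model, and handle equality axioms via constant tuples, reflexivity of $X$, and the standing assumption $\Var(\Phi)=\{v_1,v_2\}$. The only (cosmetic) difference is that where the paper transfers each $\varphi\in\Phi$ to $Y$ by forming the valuation $\kappa'$ in the pullback $P\times_Z X$ and composing with the $\Pi$-morphism $\varepsilon$, you unwind the definition of the relations on $P$ directly to get $Y\models\lambda\cdot\varphi$ with $\lambda(v)=j_v(\kappa(v))$ — the same computation, so no further comparison is needed.
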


\begin{proof}
We already know from Proposition \ref{partial_product_is_model} that $P$ is reflexive (i.e.~a model of $\T_\Pi$). First let $\Phi \Longrightarrow Rv_1\ldots v_n$ be an axiom of $\T\setminus \T_\Pi$ without equality, let $\kappa : \Var \to |P|$ be a valuation, and suppose that $P \models \kappa \cdot \varphi$ for each $\varphi \in \Phi$; we must show that $P \models R\kappa(v_1)\ldots\kappa(v_n)$. Let $\kappa(v_i) := (j_i, z_i) \in |P|$ for each $1 \leq i \leq n$, so that we must show $P \models R(j_1, z_1)\ldots(j_n, z_n)$. First, we must show that $Z \models Rz_1\ldots z_n$. We have the composite valuation $p \circ \kappa : \Var \to |Z|$ that satisfies $Z \models (p \circ \kappa) \cdot \varphi$ for each $\varphi \in \Phi$ because $p : P \to Z$ is a $\Pi$-morphism. Then because $Z$ is a $\T$-model, we deduce that $Z \models (p \circ \kappa) \cdot Rv_1\ldots v_n$, i.e.~that $Z \models Rz_1 \ldots z_n$. 

Now let $x_i \in f^{-1}(z_i) = f^{-1}(p(\kappa(v_i)))$ for each $1 \leq i \leq n$, suppose that $X \models Rx_1 \ldots x_n$, and let us show that $Y \models Rj_1(x_1)\ldots j_n(x_n)$. Since $Z \models (p \circ \kappa) \cdot \varphi$ for each $\varphi \in \Phi$ and $f$ is convex, there is some valuation $\kappa_X : \Var \to |X|$ such that $\kappa_X(v_i) = x_i$ for each $1 \leq i \leq n$ and $\kappa_X(v) \in f^{-1}(p(\kappa(v)))$ for each $v \in \Var(\Phi)\setminus\{v_1, \ldots, v_n\}$ and $X \models \kappa_X \cdot \varphi$ for each $\varphi \in \Phi$. We then obtain a valuation $\kappa' : \Var \to |P \times_Z X|$ given by $\kappa'(v) := (\kappa(v), \kappa_X(v))$ for each $v \in \Var(\Phi) \cup \{v_1, \ldots, v_n\}$. For each $\varphi \in \Phi$, we then readily deduce from $P \models \kappa \cdot \varphi$ and $X \models \kappa_X \cdot \varphi$ that $P \times_Z X \models \kappa' \cdot \varphi$. Then since $\varepsilon : P \times_Z X \to Y$ is a $\Pi$-morphism, we obtain $Y \models (\varepsilon \circ \kappa') \cdot \varphi$ for each $\varphi \in \Phi$. Because $Y$ is a $\T$-model, we then deduce that $Y \models (\varepsilon \circ \kappa') \cdot Rv_1\ldots v_n$, which means precisely that $Y \models Rj_1(x_1)\ldots j_n(x_n)$, as desired.

Now let $\Phi \Longrightarrow x = y$ be an axiom of $\T\setminus \T_\Pi$ with equality, let $\kappa : \Var \to |P|$ be a valuation, and suppose that $P \models \kappa \cdot \varphi$ for each $\varphi \in \Phi$; we must show that $\kappa(x) = \kappa(y) \in |P|$. Let $\kappa(x) := (j_1, z_1)$ and $\kappa(y) := (j_2, z_2)$, so that we must show $(j_1, z_1) = (j_2, z_2)$. We have the composite valuation $p \circ \kappa : \Var \to |Z|$ with $Z \models (p \circ \kappa) \cdot \varphi$ for each $\varphi \in \Phi$, since $p : P \to Z$ is a $\Pi$-morphism. Then because $Z$ is a $\T$-model, we deduce that $p(\kappa(x)) = p(\kappa(y))$, i.e.~that $z_1 = z_2 = z$. We must now show that $j_1 = j_2 : X_{f, z} \to Y$. So let $a \in f^{-1}(z)$. We have a valuation $\kappa_a : \Var \to |Y|$ given by $\kappa_a(x) := j_1(a)$ and $\kappa_a(y) := j_2(a)$. Since $\Var(\Phi) = \{x, y\}$ \eqref{Horn_theory}, for each $\varphi \in \Phi$ it readily follows from $P \models \kappa \cdot \varphi$ and $a \in f^{-1}(z) = f^{-1}(z_1) = f^{-1}(z_2)$ and the reflexivity of $\T$ that $Y \models \kappa_a \cdot \varphi$. Since $Y$ is a $\T$-model, we then deduce that $\kappa_a(x) = \kappa_a(y)$, i.e.~that $j_1(a) = j_2(a)$, as desired. This proves that $P$ is a model of $\T$.         
\end{proof}

\noindent From \ref{partial_prod_para} and Theorem \ref{convex_thm_unified} we immediately deduce the following:

\begin{theo}
\label{convex_exp_unified}
Convex morphisms of $\T\Mod$ are exponentiable. \qed
\end{theo}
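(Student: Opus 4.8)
The plan is to deduce the statement directly from the partial product criterion for exponentiability recalled in \ref{partial_prod_para} together with Theorem \ref{convex_thm_unified}, so the argument is essentially a two-line assembly. First I would note that, by \ref{relational_top}, the category $\T\Mod$ has finite limits (indeed it is complete), so the notion of an exponentiable morphism makes sense in $\T\Mod$ in the sense of \ref{lcc_para}, and by \cite[Lemma 2.1]{Dyckhoff} (as recalled in \ref{partial_prod_para}) a morphism $f$ of $\T\Mod$ is exponentiable if and only if $\T\Mod$ has all partial products over $f$. I would also recall the bookkeeping point from the start of \S\ref{disc_section}: since $\Pi$ is discrete, a reflexive relational Horn theory $\T$ over $\Pi$ may be assumed to be an extension of $\T_\Pi$, so that Propositions \ref{partial_product_is_model} and \ref{partial_product_prop_unified} apply to $\T$.

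Now let $f : X \to Z$ be a convex morphism of $\T\Mod$ and fix an arbitrary $\T$-model $Y$. By Theorem \ref{convex_thm_unified}, the $\Pi$-structure $P(Y, f)$ of Definition \ref{partial_product_unified} is a $\T$-model; this verifies the hypothesis of Proposition \ref{partial_product_prop_unified}, whose conclusion is that $P(Y, f)$ is a partial product of $Y$ over $f$ in $\T\Mod$. Hence every $\T$-model $Y$ has a partial product over $f$, i.e.\ $\T\Mod$ has all partial products over $f$, and therefore $f$ is exponentiable.

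I do not expect any real obstacle at this point, since all of the substantive content has already been carried out in Theorem \ref{convex_thm_unified} (and, before it, in Propositions \ref{partial_product_is_model} and \ref{partial_product_prop_unified}, and in the construction of Definition \ref{partial_product_unified}). The only things requiring a moment of care are purely formal: confirming that $\T$ is being treated as an extension of $\T_\Pi$ so that the results of \S\ref{exponentiable_Tmod} are available (which holds by the convention noted above together with the footnote in \S\ref{disc_section}), and confirming that ``exponentiable morphism'' is being used as in \ref{lcc_para} — i.e.\ as an exponentiable object of the slice $\T\Mod/Z$ — which is legitimate precisely because $\T\Mod$ has finite limits. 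With these observations in place the proof is complete.
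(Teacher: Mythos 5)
Your proposal is correct and is exactly the paper's argument: the paper deduces the theorem immediately from \ref{partial_prod_para} (the Dyckhoff partial-product criterion) together with Theorem \ref{convex_thm_unified}, which (via Proposition \ref{partial_product_prop_unified}) gives a partial product $P(Y,f)$ of each $\T$-model $Y$ over the convex morphism $f$. Your additional remarks about $\T\Mod$ having finite limits and $\T$ being (w.l.o.g.) an extension of $\T_\Pi$ are the same bookkeeping the paper handles in \ref{relational_top} and the footnote at the start of \S\ref{disc_section}.
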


\begin{rmk}
\label{nec_rmk}
{\em
For certain examples of reflexive relational Horn theories $\T$, it is known that convexity of a morphism of $\T\Mod$ is not only sufficient but also \emph{necessary} for its exponentiability. For example, when $\T$ is the reflexive relational Horn theory for preordered (partially ordered) sets (see Example \ref{without_equality_examples}.\ref{preord}), then (in view of Example \ref{convex_egg}) it is known that a morphism of $\T\Mod = \Preord$ ($\T\Mod = \Pos$) is convex iff it is exponentiable; see \cite[Theorem 2.2]{Exp_morphisms} and \cite[\S 4.1]{Exponentiation_in_V_categories}. Despite our efforts, we do not know if convexity is also necessary for exponentiability in general; the proofs of necessity in the special cases of $\Preord$ and $\Pos$ do not readily generalize to $\T\Mod$ for an arbitrary reflexive relational Horn theory $\T$.
}
\end{rmk}

\noindent Morphisms of $\T\Mod$ are \emph{always} convex with respect to axioms of $\T$ of a certain special form, as we show next.

\begin{defn}
\label{entails}
{\em
Let $\bbS$ be a relational Horn theory over a relational signature $\Sigma$. We say that $\bbS$ \textbf{entails}\footnote{It is also possible to express entailment in terms of a syntactic deducibility relation based on the axioms of $\bbS$ and certain inference rules for relational Horn formulas (cf.~\cite[Page 5]{Monadsrelational}).} a relational Horn formula $\Phi \Longrightarrow \psi$ over $\Sigma$ if it is satisfied by every $\bbS$-model. 
}
\end{defn}

\begin{defn}
\label{safe_axiom}
{\em
An axiom $\Phi \Longrightarrow Rv_1\ldots v_n$ without equality of $\T\setminus\T_\Pi$ is \textbf{safe} if there is some function $\kappa : \Var \to \{v_1, \ldots, v_n\}$ that fixes $\{v_1, \ldots, v_n\}$ (i.e.~$\kappa(v_i) = v_i$ for each $1 \leq i \leq n$) such that $\T$ entails the relational Horn formula $R v_1 \ldots v_n \Longrightarrow \kappa \cdot \varphi$ for each $\varphi \in \Phi$. The axiom $\Phi \Longrightarrow Rv_1\ldots v_n$ is \textbf{very safe} if it is safe and moreover $\Var(\Phi) \subseteq \{v_1, \ldots, v_n\}$ (so that $\T$ entails the relational Horn formula $Rv_1 \ldots v_n \Longrightarrow \varphi$ for each $\varphi \in \Phi$). 
}
\end{defn}

\begin{egg}
\label{safe_egg}
{\em
Let $\T$ be the reflexive relational Horn theory for preordered (partially ordered) sets (see Example \ref{without_equality_examples}.\ref{preord}). The only axiom of $\T\setminus\T_\Pi$ without equality is the transitivity axiom $x \leq y, y \leq z \Longrightarrow x \leq z$. This axiom is safe, because if we define $\kappa : \Var \to \{x, z\}$ by $\kappa(x) := x$ and $\kappa(y) := x$ and $\kappa(z) := z$ (and arbitrarily otherwise), then $\kappa$ fixes $\{x, z\}$ and $\T$ entails the relational Horn formulas $x \leq z \Longrightarrow x \leq x$ and $x \leq z \Longrightarrow x \leq z$.

For any reflexive relational Horn theory $\T$ and binary relation symbol $R \in \Pi$, the symmetry axiom $Rxy \Longrightarrow Ryx$ is (evidently) very safe. 
}
\end{egg}

\begin{prop}
\label{safe_prop}
Let $f : X \to Z$ be a morphism of $\T\Mod$. Then $f$ is convex with respect to all very safe axioms of $\T\setminus\T_\Pi$. 
\end{prop}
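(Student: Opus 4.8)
The plan is to unwind the definitions of \emph{very safe} (Definition~\ref{safe_axiom}) and of \emph{convexity with respect to an axiom} (Definition~\ref{convex_unified}), and to observe that for a very safe axiom the witnessing valuation required by convexity may be chosen in the most naive way possible, with the one nontrivial verification supplied directly by the entailment built into ``very safe''. So the statement should follow by essentially formal manipulation.

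First I would fix an arbitrary very safe axiom $\Phi \Longrightarrow Rv_1\ldots v_n$ of $\T\setminus\T_\Pi$ (this is without equality by definition of ``very safe''). Then, following Definition~\ref{convex_unified}, I would let $\kappa_Z : \Var \to |Z|$ be a valuation with $Z \models \kappa_Z\cdot\varphi$ for all $\varphi\in\Phi$, let $x_i\in f^{-1}(\kappa_Z(v_i))$ for each $1\leq i\leq n$, and assume $X\models Rx_1\ldots x_n$. I define $\kappa : \Var\to|X|$ by $\kappa(v_i):=x_i$ and arbitrarily on all other variables, so that $\kappa(v_i)=x_i$ holds by construction. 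Since the axiom is very safe we have $\Var(\Phi)\subseteq\{v_1,\ldots,v_n\}$, hence $\Var(\Phi)\setminus\{v_1,\ldots,v_n\}=\varnothing$ and the ``fibre condition'' $\kappa(v)\in f^{-1}(\kappa_Z(v))$ for $v\in\Var(\Phi)\setminus\{v_1,\ldots,v_n\}$ of Definition~\ref{convex_unified} is vacuously satisfied. So the only remaining point is to check that $X\models\kappa\cdot\varphi$ for each $\varphi\in\Phi$.

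For that last step I would invoke the observation recorded in Definition~\ref{safe_axiom}: a very safe axiom has the property that $\T$ entails the relational Horn formula $Rv_1\ldots v_n\Longrightarrow\varphi$ for each $\varphi\in\Phi$. As $X$ is a $\T$-model it satisfies each such formula, and since $X\models R\kappa(v_1)\ldots\kappa(v_n)=Rx_1\ldots x_n$ by hypothesis, applying satisfaction with the valuation $\kappa$ yields $\Xbar\models\kappa\cdot\varphi$, i.e.~$X\models\kappa\cdot\varphi$ (the edge $\varphi$ contains no $=$). This exhibits the valuation required by Definition~\ref{convex_unified}, so $f$ is convex with respect to $\Phi\Longrightarrow Rv_1\ldots v_n$; as this axiom was arbitrary among the very safe ones, the proof is complete. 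I do not anticipate any real obstacle: the content is entirely a matter of matching the clauses of ``very safe'' to the clauses of ``convex''. The only points deserving a moment's attention are the vacuity of the fibre condition --- precisely where the hypothesis $\Var(\Phi)\subseteq\{v_1,\ldots,v_n\}$ is used --- and the harmless appeal to Remark~\ref{val_rmk} to note that $X\models\kappa\cdot\varphi$ depends only on the values of $\kappa$ on $\Var(\varphi)\subseteq\{v_1,\ldots,v_n\}$, so the arbitrary choices in defining $\kappa$ elsewhere are irrelevant.
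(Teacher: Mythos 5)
Your proof is correct and follows essentially the same route as the paper: define $\kappa(v_i) := x_i$ (arbitrary elsewhere), note the fibre condition is vacuous since $\Var(\Phi) \subseteq \{v_1,\ldots,v_n\}$, and conclude $X \models \kappa \cdot \varphi$ from $X \models Rx_1\ldots x_n$ together with the entailment $Rv_1\ldots v_n \Longrightarrow \varphi$ built into very safety. Your extra remarks (vacuity of the fibre condition, the appeal to Remark~\ref{val_rmk}) only make explicit what the paper leaves implicit.
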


\begin{proof}
Let $\Phi \Longrightarrow Rv_1\ldots v_n$ be a very safe axiom of $\T\setminus\T_\Pi$, so that (in particular) $\Var(\Phi) \subseteq \{v_1, \ldots, v_n\}$. Let $\kappa_Z : \Var \to |Z|$ be a valuation such that $Z \models \kappa_Z \cdot \varphi$ for each $\varphi \in \Phi$, let $x_i \in f^{-1}(\kappa_Z(v_i))$ for each $1 \leq i \leq n$, and suppose that $X \models Rx_1 \ldots x_n$. We then have a valuation $\kappa : \Var \to |X|$ given by $\kappa(v_i) := x_i$ for each $1 \leq i \leq n$ such that $X \models \kappa \cdot \varphi$ for each $\varphi \in \Phi$, because $X \models R\kappa(v_1)\ldots\kappa(v_n)$ and $\T$ entails $Rv_1\ldots v_n \Longrightarrow \varphi$ for each $\varphi \in \Phi$.    
\end{proof}

We shall now specialize the preceding definitions and results to provide useful sufficient conditions for the exponentiability of \emph{objects} of $\T\Mod$.

\begin{para}
\label{partial_product_object}
{\em
For a $\T$-model $X$ and the unique morphism $!_X : X \to 1$, we now simplify the construction of the $\Pi$-structure $P = P(Y, !_X) = Y^X$ of Definition \ref{partial_product_unified} for a $\T$-model $Y$. We have $\left|Y^X\right| = \Str(\Pi)(X, Y)$, the set of $\Pi$-morphisms $X \to Y$. For each $R \in \Pi$ of arity $n \geq 1$ and any $\Pi$-morphisms $h_1, \ldots, h_n : X \to Y$, we have $Y^X \models Rh_1 \ldots h_n$ iff $X \models Rx_1 \ldots x_n$ implies $Y \models Rh_1(x_1)\ldots h_n(x_n)$ for all $x_1, \ldots, x_n \in |X|$. The evaluation $\Pi$-morphism $\varepsilon : Y^X \times X \to Y$ is given by $\varepsilon(h, x) := h(x)$ for $h \in \Str(\Pi)(X, Y)$ and $x \in |X|$.  
}
\end{para} 

\noindent Definition \ref{convex_unified} now specializes as follows:

\begin{defn}
\label{convex_object}
{\em
Let $X$ be a $\T$-model, and let $\Phi \Longrightarrow Rv_1\ldots v_n$ be an axiom of $\T\setminus\T_\Pi$ without equality. We say that $X$ is \textbf{convex with respect to (the axiom) $\Phi \Longrightarrow Rv_1 \ldots v_n$} if the unique morphism $!_X : X \to 1$ is convex with respect to $\Phi \Longrightarrow Rv_1 \ldots v_n$, i.e.~if for all $x_1, \ldots, x_n \in |X|$ such that $X \models Rx_1 \ldots x_n$, there is a valuation $\kappa : \Var \to |X|$ such that $\kappa(v_i) = x_i$ for each $1 \leq i \leq n$ and $X \models \kappa \cdot \varphi$ for each $\varphi \in \Phi$. We say that $X$ is \textbf{convex} if it is convex with respect to each axiom of $\T\setminus\T_\Pi$ without equality.
}  
\end{defn}

\noindent Theorems \ref{convex_thm_unified} and \ref{convex_exp_unified} now specialize to yield the following:

\begin{theo}
\label{convex_exp_object}
Convex $\T$-models are exponentiable. \qed
\end{theo}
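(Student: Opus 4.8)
The plan is to deduce Theorem~\ref{convex_exp_object} as an immediate specialization of the already-established Theorems~\ref{convex_thm_unified} and~\ref{convex_exp_unified}, via the observation recorded in~\ref{partial_prod_para} that an object $X$ of a finitely complete category is exponentiable if and only if the unique morphism $!_X : X \to 1$ is exponentiable. Concretely: given a convex $\T$-model $X$ (in the sense of Definition~\ref{convex_object}), I would first note that $X$ being convex means precisely that $!_X : X \to 1$ is convex with respect to every axiom of $\T \setminus \T_\Pi$ without equality, which is exactly the hypothesis ``$f$ is a convex morphism of $\T\Mod$'' of Theorem~\ref{convex_exp_unified} applied to $f = {!_X}$.

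The one point that needs a word of care is that Definition~\ref{convex_unified} quantifies over axioms \emph{without equality} only, whereas Theorem~\ref{convex_thm_unified} also handles axioms \emph{with equality}; but convexity as a condition is vacuous on equality axioms (Definition~\ref{convex_unified} and its specialization Definition~\ref{convex_object} only impose a lifting condition for the non-equality axioms), so ``$X$ is convex'' and ``$!_X$ is convex'' say the same thing, and Theorem~\ref{convex_thm_unified} already incorporates the equality axioms using the standing assumption $\Var(\Phi) = \{v_1, v_2\}$ from Definition~\ref{Horn_theory}. Thus no additional argument is needed on the equality side: it was already dispatched inside Theorem~\ref{convex_thm_unified}.

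Putting this together: if $X$ is a convex $\T$-model, then $!_X : X \to 1$ is a convex morphism of $\T\Mod$, so by Theorem~\ref{convex_exp_unified} the morphism $!_X$ is exponentiable, and hence by~\ref{partial_prod_para} the object $X$ is exponentiable. (Equivalently and more explicitly, one may invoke Theorem~\ref{convex_thm_unified} to see that for each $\T$-model $Y$ the $\Pi$-structure $P(Y, !_X) = Y^X$ described in~\ref{partial_product_object} is a $\T$-model and is a partial product of $Y$ over $!_X$, and then appeal to~\cite[Lemma 2.1]{Dyckhoff} as cited in~\ref{partial_prod_para}.) There is no real obstacle here; the work has all been done, and the only thing to get right is the bookkeeping that the ``without equality'' restriction in Definition~\ref{convex_object} is harmless because Theorem~\ref{convex_thm_unified} already treats the equality axioms unconditionally. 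Since this is purely a specialization, I would expect the written proof to be a single sentence, which is presumably why the excerpt supplies only \qed.
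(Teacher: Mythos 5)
Your proposal is correct and is essentially the paper's own argument: Definition~\ref{convex_object} explicitly defines convexity of $X$ as convexity of $!_X : X \to 1$, so Theorems~\ref{convex_thm_unified} and~\ref{convex_exp_unified} specialize immediately, and your handling of the equality axioms matches the paper. The only nitpick is a citation slip: the fact that an object is exponentiable iff $!_X$ is exponentiable is recorded in~\ref{lcc_para}, not~\ref{partial_prod_para}.
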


\begin{prop}
\label{safe_object_prop}
Let $X$ be a $\T$-model. Then $X$ is convex with respect to all safe axioms of $\T\setminus\T_\Pi$. 
\end{prop}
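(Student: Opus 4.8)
The plan is to adapt the proof of Proposition \ref{safe_prop} to the object case, taking advantage of the fact that for a $\T$-model $X$ the relevant morphism is $!_X : X \to 1$: since $|1|$ is a singleton, the ``fibre'' requirement $\kappa(v) \in f^{-1}(\kappa_Z(v))$ appearing in Definition \ref{convex_unified} for the variables $v \in \Var(\Phi)\setminus\{v_1,\ldots,v_n\}$ is automatically met whatever we assign to such $v$. This freedom is exactly what lets us weaken the hypothesis from ``very safe'' (as in Proposition \ref{safe_prop}) to ``safe'': we are allowed to substitute the extra variables of $\Phi$ away, using the substitution supplied by safeness, rather than needing $\Var(\Phi) \subseteq \{v_1,\ldots,v_n\}$ outright.

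Concretely, I would fix a safe axiom $\Phi \Longrightarrow Rv_1\ldots v_n$ of $\T\setminus\T_\Pi$ (which is without equality by Definition \ref{safe_axiom}) and, by safeness, fix a function $\sigma : \Var \to \{v_1,\ldots,v_n\}$ with $\sigma(v_i) = v_i$ for each $i$ such that $\T$ entails $Rv_1\ldots v_n \Longrightarrow \sigma\cdot\varphi$ for every $\varphi \in \Phi$. Now let $x_1,\ldots,x_n \in |X|$ with $X \models Rx_1\ldots x_n$; I must produce a valuation $\kappa : \Var \to |X|$ with $\kappa(v_i) = x_i$ for each $i$ and $X \models \kappa\cdot\varphi$ for each $\varphi\in\Phi$ (Definition \ref{convex_object}). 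Choose any valuation $\mu : \Var \to |X|$ with $\mu(v_i) := x_i$ for $1 \leq i \leq n$ and set $\kappa := \mu\circ\sigma$. Since $\sigma$ fixes $\{v_1,\ldots,v_n\}$ we get $\kappa(v_i) = \mu(\sigma(v_i)) = \mu(v_i) = x_i$, as needed.

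For the remaining condition, recall that $X$, being a $\T$-model, satisfies each relational Horn formula $Rv_1\ldots v_n \Longrightarrow \sigma\cdot\varphi$ entailed by $\T$. Evaluating this formula at the valuation $\mu$: its hypothesis $X \models \mu\cdot(Rv_1\ldots v_n)$ is precisely $X \models Rx_1\ldots x_n$, which holds, so $X \models \mu\cdot(\sigma\cdot\varphi)$ for each $\varphi\in\Phi$. A routine unwinding of Definition \ref{Pi_morphism} gives $\mu\cdot(\sigma\cdot\varphi) = (\mu\circ\sigma)\cdot\varphi = \kappa\cdot\varphi$, so $X \models \kappa\cdot\varphi$ for each $\varphi\in\Phi$, completing the verification that $X$ is convex with respect to $\Phi\Longrightarrow Rv_1\ldots v_n$; as this axiom was an arbitrary safe axiom of $\T\setminus\T_\Pi$, $X$ is convex with respect to all of them. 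I do not expect any real obstacle here: the argument is a direct transcription of (the object case of) Proposition \ref{safe_prop}, and the only point demanding a little care is the elementary identity $\mu\cdot(\sigma\cdot\varphi) = (\mu\circ\sigma)\cdot\varphi$ relating substitution along $\sigma$ with evaluation along $\mu$.
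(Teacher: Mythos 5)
Your proof is correct and is essentially the paper's own argument: the paper likewise composes the safeness substitution (your $\sigma$, its $\kappa$) with the assignment $v_i \mapsto x_i$ to build the required valuation, and verifies $X \models \kappa\cdot\varphi$ by applying the entailed formula $Rv_1\ldots v_n \Longrightarrow \sigma\cdot\varphi$ to the $\T$-model $X$ at a valuation sending $v_i$ to $x_i$. Your explicit observations that the fibre condition is vacuous over the terminal object and that $\mu\cdot(\sigma\cdot\varphi) = (\mu\circ\sigma)\cdot\varphi$ are just the details the paper leaves implicit.
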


\begin{proof}
Let $\Phi \Longrightarrow Rv_1 \ldots v_n$ be a safe axiom of $\T\setminus\T_\Pi$. Then there is some function $\kappa : \Var \to \{v_1, \ldots, v_n\}$ that fixes $\{v_1, \ldots, v_n\}$ and is such that $\T$ entails the relational Horn formula $Rv_1 \ldots v_n \Longrightarrow \kappa \cdot \varphi$ for each $\varphi \in \Phi$. Let $x_1, \ldots, x_n \in |X|$ and suppose that $X \models Rx_1 \ldots x_n$. Let $\iota : \{v_1, \ldots, v_n\} \to |X|$ be the function defined by $\iota(v_i) := x_i$ for each $1 \leq i \leq n$. We then have a valuation $\kappa_X := \iota \circ \kappa: \Var \to |X|$ satisfying $\kappa_X(v_i) = x_i$ for each $1 \leq i \leq n$ such that $X \models \kappa_X \cdot \varphi$ for each $\varphi \in \Phi$, because $X \models R\kappa_X(v_1)\ldots \kappa_X(v_n)$ and $\T$ entails $Rv_1 \ldots v_n \Longrightarrow \kappa \cdot \varphi$.    
\end{proof}

\noindent From Theorem \ref{convex_exp_unified}, Proposition \ref{safe_prop}, Theorem \ref{convex_exp_object}, and Proposition \ref{safe_object_prop}, we now immediately deduce the following result:

\begin{theo}
\label{lcc_thm}
Let $\T$ be a reflexive relational Horn theory over a relational signature $\Pi$, and suppose that all axioms of $\T\setminus\T_\Pi$ without equality are safe (resp.~very safe). Then $\T\Mod$ is cartesian closed (resp.~locally cartesian closed). \qed
\end{theo}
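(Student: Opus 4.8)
The plan is to assemble the theorem directly from the machinery developed above, handling the two cases separately. For the \emph{locally cartesian closed} assertion, I would fix an arbitrary morphism $f : X \to Z$ of $\T\Mod$ and show it is exponentiable. By \ref{partial_prod_para} together with Proposition \ref{partial_product_prop_unified} and Theorem \ref{convex_thm_unified}, it suffices to verify that $f$ is convex, i.e.\ convex with respect to every axiom of $\T\setminus\T_\Pi$ without equality; but by hypothesis every such axiom is very safe, and Proposition \ref{safe_prop} says precisely that every morphism of $\T\Mod$ is convex with respect to very safe axioms. Hence $f$ is convex, so by Theorem \ref{convex_exp_unified} it is exponentiable. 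Since $\T\Mod$ has finite limits (by \ref{relational_top} and \ref{relational_prod}), it is then locally cartesian closed by the definition recalled in \ref{lcc_para}.

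For the \emph{cartesian closed} assertion I would run the same argument at the level of objects: fix a $\T$-model $X$ and show it is exponentiable. By Theorem \ref{convex_exp_object} it suffices to show that $X$ is convex, i.e.\ convex with respect to every axiom of $\T\setminus\T_\Pi$ without equality, and by hypothesis every such axiom is safe, so Proposition \ref{safe_object_prop} yields the convexity of $X$ immediately. Thus every object of $\T\Mod$ is exponentiable, and since $\T\Mod$ has finite products (by \ref{relational_prod}), it is cartesian closed.

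There is essentially no obstacle in this last step: all of the substance has already been carried out in Theorem \ref{convex_thm_unified} (the partial-product construction of \ref{partial_product_unified} lands in $\T\Mod$ whenever $f$ is convex) and in Propositions \ref{safe_prop} and \ref{safe_object_prop} (safety of the axioms forces the required convexity for free, using that $\T$ entails the relevant implications). The only point worth recording is the consistency of the two halves: every very safe axiom is in particular safe, so the hypothesis of the locally cartesian closed case subsumes that of the cartesian closed case, in agreement with the general implication (local cartesian closure $\Rightarrow$ cartesian closure) noted in \ref{lcc_para}. I would also remark in passing that the simplifying convention on equality axioms adopted in Definition \ref{Horn_theory} is exactly what makes Theorem \ref{convex_thm_unified} applicable here, so no further restriction on the equality axioms of $\T$ is needed.
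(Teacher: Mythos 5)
Your proposal is correct and follows exactly the paper's route: the paper deduces the theorem immediately from Theorem \ref{convex_exp_unified} with Proposition \ref{safe_prop} (for the very safe/locally cartesian closed case) and Theorem \ref{convex_exp_object} with Proposition \ref{safe_object_prop} (for the safe/cartesian closed case), which is precisely the assembly you describe.
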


\begin{egg}
\label{preord_cc}
{\em
We saw in Example \ref{safe_egg} that if $\T$ is the reflexive relational Horn theory for preordered or partially ordered sets, then all axioms of $\T\setminus\T_\Pi$ without equality are safe. So from Theorem \ref{lcc_thm} we recover the well-known facts that $\Preord$ and $\Pos$ are cartesian closed categories. We also saw in Example \ref{safe_egg} that if $\T$ is any reflexive relational Horn theory, then for each binary relation symbol $R \in \Pi$, the symmetry axiom $Rxy \Longrightarrow Ryx$ is very safe. So if $\Pi$ consists of a single binary relation symbol $R$ and $\T$ is the relational Horn theory over $\Pi$ consisting of the two axioms $\Longrightarrow Rxx$ and $Rxy \Longrightarrow Ryx$, then from Theorem \ref{lcc_thm} we also recover the well-known fact that $\T\Mod$, which is the category of sets equipped with a (binary) reflexive and symmetric relation, is locally cartesian closed. 
}
\end{egg}

A \emph{topological universe} \cite[Definition 28.21]{AHS} is a well-fibred topological category over $\Set$ that is also a quasitopos. It is remarked in \cite[Example 28.23]{AHS} that (using our notation) when $\Pi$ is the discrete signature consisting of a single binary relation symbol, the category $\T_\Pi\Mod$ (whose objects are sets equipped with a (binary) reflexive relation) is a topological universe. We now show that this result holds more generally. 

\begin{theo}
\label{quasitopos_thm}
Let $\T$ be reflexive relational Horn theory without equality, and suppose that all axioms of $\T\setminus\T_\Pi$ are very safe. Then $\T\Mod$ is a topological universe. 
\end{theo}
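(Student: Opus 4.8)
The plan is to assemble this from pieces already in place. By Theorem~\ref{lcc_thm} applied in the ``very safe'' case, the hypothesis that all axioms of $\T\setminus\T_\Pi$ are very safe yields that $\T\Mod$ is locally cartesian closed. Since $\T$ is reflexive and without equality, $\T\Mod$ is topological over $\Set$ by \ref{relational_top}, and it is well-fibred by \ref{refl_para}. So $\T\Mod$ is a well-fibred topological category over $\Set$ that is locally cartesian closed, hence by \ref{quasitopos_para} it is a quasitopos (a topological category over $\Set$ is a quasitopos iff it is locally cartesian closed). Combining: $\T\Mod$ is a well-fibred topological category over $\Set$ and a quasitopos, which is exactly the definition of a topological universe recalled just before the statement.

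Concretely, I would write: first invoke Theorem~\ref{lcc_thm} (very safe case) to get local cartesian closure. Then note $\T\Mod$ is topological over $\Set$ by \ref{relational_top} (using that $\T$ is without equality). Then apply \ref{quasitopos_para} to conclude $\T\Mod$ is a quasitopos. Then recall from \ref{refl_para} that $\T\Mod$ is well-fibred since $\T$ is reflexive. Finally conclude that $\T\Mod$ is a well-fibred topological category over $\Set$ that is a quasitopos, i.e.\ a topological universe in the sense of \cite[Definition 28.21]{AHS}.

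I do not expect any genuine obstacle here: every ingredient is already established in the excerpt, and the proof is just a matter of citing \ref{relational_top}, \ref{refl_para}, \ref{quasitopos_para}, and Theorem~\ref{lcc_thm} in the right order. The only mild subtlety worth flagging in the write-up is making sure the hypotheses line up: ``very safe'' (not merely ``safe'') is what gives local cartesian closure rather than mere cartesian closure, and ``without equality'' is what makes $\T\Mod$ genuinely topological (as opposed to only monotopological) over $\Set$, which is needed to apply the quasitopos criterion of \ref{quasitopos_para}. With those two points noted, the proof is a two-or-three sentence assembly.

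\begin{proof}
Since all axioms of $\T\setminus\T_\Pi$ are very safe, they are in particular safe, and so Theorem \ref{lcc_thm} implies that $\T\Mod$ is locally cartesian closed. Because $\T$ is a reflexive relational Horn theory without equality, the concrete category $\T\Mod$ is topological over $\Set$ by \ref{relational_top}, and it is well-fibred by \ref{refl_para}. A topological category over $\Set$ that is locally cartesian closed is a quasitopos by \ref{quasitopos_para}, so $\T\Mod$ is a quasitopos. Hence $\T\Mod$ is a well-fibred topological category over $\Set$ that is a quasitopos, i.e.~a topological universe.
\end{proof}
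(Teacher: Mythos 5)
Your proof is correct and follows essentially the same route as the paper's: invoke Theorem \ref{lcc_thm} for local cartesian closure, then use that $\T\Mod$ is topological over $\Set$ (\ref{relational_top}, via ``without equality'') and well-fibred (\ref{refl_para}, via reflexivity) together with the criterion in \ref{quasitopos_para} to conclude it is a quasitopos and hence a topological universe. The two hypothesis checks you flag (very safe vs.\ safe, and without equality for topologicity) are exactly the relevant points.
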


\begin{proof}
We know from Theorem \ref{lcc_thm} that $\T\Mod$ is locally cartesian closed. Since $\T\Mod$ is topological over $\Set$ \eqref{relational_top} and well-fibred \eqref{refl_para}, we now deduce from \ref{quasitopos_para} that $\T\Mod$ is a quasitopos and hence a topological universe. 
\end{proof}

A \emph{topology} $\mathcal{P}$ (see \cite[V.4.1]{Monoidaltop}) on a finitely complete category $\C$ is a class of morphisms of $\C$ that contains all isomorphisms, is closed under composition, and is stable under pullback. It is known (see e.g.~\cite[V.4.D]{Monoidaltop} or \cite[Corollaries 1.3 and 1.4]{Cartesianness}) that the exponentiable morphisms of $\C$ form a topology $\mathsf{Exp}(\C)$ on $\C$.

\begin{defn}
{\em
We let $\Convex(\T\Mod)$ be the class of convex morphisms of $\T\Mod$.
}
\end{defn} 

\noindent Since we in general only know that $\Convex(\T\Mod) \subseteq \mathsf{Exp}(\T\Mod)$ (see Theorem \ref{convex_exp_unified} and Remark \ref{nec_rmk}), we cannot directly deduce that $\Convex(\T\Mod)$ is a topology on $\T\Mod$ from the fact that $\mathsf{Exp}(\T\Mod)$ is a topology on $\T\Mod$; but the former claim is nevertheless true:  

\begin{prop}
\label{convex_iso}
$\Convex(\T\Mod)$ is a topology on $\T\Mod$.  
\end{prop}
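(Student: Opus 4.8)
The plan is to verify directly that the class $\Convex(\T\Mod)$ satisfies the three defining properties of a topology from \cite{Monoidaltop}: it contains all isomorphisms, it is closed under composition, and it is stable under pullback. Each of these should follow by unwinding Definition \ref{convex_unified} (or, more conveniently, the lifting-property reformulation of Remark \ref{lifting_rmk}), axiom by axiom: $f$ is convex iff for each axiom $\Phi \Longrightarrow Rv_1\ldots v_n$ of $\T\setminus\T_\Pi$ without equality, $f$ has the weak right lifting property with respect to the canonical morphism $f_{\Phi, R} : R_\T \to \left(\Phi \Longrightarrow Rv_1\ldots v_n\right)_\T$.

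For isomorphisms: if $f : X \to Z$ is an isomorphism of $\T\Mod$, then given a valuation $\kappa_Z$ on $Z$ with $Z \models \kappa_Z \cdot \varphi$ for all $\varphi \in \Phi$, and given $x_i \in f^{-1}(\kappa_Z(v_i))$ with $X \models Rx_1\ldots x_n$, one simply transports $\kappa_Z$ across the inverse isomorphism to obtain the required valuation $\kappa := f^{-1}\circ\kappa_Z$ on $X$; the conditions $\kappa(v_i) = x_i$ hold because $f^{-1}$ is a two-sided inverse and $x_i$ is the unique preimage of $\kappa_Z(v_i)$, and $X \models \kappa \cdot \varphi$ follows since $f^{-1}$ is a $\Pi$-morphism. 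For composition, suppose $f : X \to Z$ and $g : Z \to W$ are convex and fix an axiom; given a valuation $\kappa_W$ on $W$ with $W \models \kappa_W \cdot \varphi$ for $\varphi \in \Phi$, and $x_i \in (g\circ f)^{-1}(\kappa_W(v_i))$ with $X \models Rx_1\ldots x_n$: first apply convexity of $g$ with the valuation $\kappa_W$ and the elements $f(x_i) \in g^{-1}(\kappa_W(v_i))$ (note $Z \models Rf(x_1)\ldots f(x_n)$ since $f$ is a $\Pi$-morphism) to get a valuation $\kappa_Z$ on $Z$ with $\kappa_Z(v_i) = f(x_i)$, $\kappa_Z(v) \in g^{-1}(\kappa_W(v))$, and $Z \models \kappa_Z \cdot \varphi$; then apply convexity of $f$ with $\kappa_Z$ and the elements $x_i \in f^{-1}(\kappa_Z(v_i)) = f^{-1}(f(x_i))$ to get the desired $\kappa$ on $X$, noting that $\kappa(v) \in f^{-1}(\kappa_Z(v)) \subseteq (g\circ f)^{-1}(\kappa_W(v))$.

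For pullback stability, let $f : X \to Z$ be convex and let $g : W \to Z$ be arbitrary, with pullback $g^*f : W\times_Z X \to W$ computed as in \ref{relational_prod}. Fix an axiom $\Phi \Longrightarrow Rv_1\ldots v_n$, a valuation $\kappa_W$ on $W$ with $W \models \kappa_W \cdot \varphi$ for $\varphi \in \Phi$, and elements $(w_i, x_i) \in (g^*f)^{-1}(\kappa_W(v_i))$ with $W\times_Z X \models R(w_1,x_1)\ldots(w_n,x_n)$; the latter gives $W \models Rw_1\ldots w_n$ and $X \models Rx_1\ldots x_n$, and $w_i = \kappa_W(v_i)$. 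Pushing $\kappa_W$ into $Z$ via $g$ yields a valuation $g\circ\kappa_W$ with $Z \models (g\circ\kappa_W)\cdot\varphi$ for $\varphi \in \Phi$, and $x_i \in f^{-1}(g(w_i)) = f^{-1}((g\circ\kappa_W)(v_i))$; convexity of $f$ produces a valuation $\kappa_X$ on $X$ with $\kappa_X(v_i) = x_i$, $\kappa_X(v) \in f^{-1}((g\circ\kappa_W)(v))$, and $X \models \kappa_X\cdot\varphi$. Then $\kappa(v) := (\kappa_W(v), \kappa_X(v))$ lands in $|W\times_Z X|$ precisely because $g(\kappa_W(v)) = f(\kappa_X(v))$, satisfies $\kappa(v_i) = (w_i, x_i)$, and satisfies $W\times_Z X \models \kappa\cdot\varphi$ for each $\varphi \in \Phi$ since both components do. I expect the pullback case to be the one requiring the most care, chiefly in checking that the paired valuation genuinely factors through the pullback — but this reduces to the elementary observation above about commuting with $g$ and $f$, so there should be no real obstacle; the argument is essentially a diagram chase that could alternatively be phrased via the stability of weak right lifting properties under pullback.
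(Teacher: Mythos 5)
Your verification is correct, and it differs from the paper's proof mainly in level of explicitness rather than in substance. The paper disposes of the proposition in one line: by Remark \ref{lifting_rmk}, convexity is a weak right lifting property against a fixed set of morphisms $f_{\Phi,R}$, and classes of morphisms defined by such a lifting property are well known to contain all isomorphisms, be closed under composition, and be stable under pullback; the paper simply cites this standard fact. You instead unwind Definition \ref{convex_unified} and check the three topology axioms directly at the level of valuations, which amounts to reproving that standard fact in this concrete setting. Your three checks are sound: for isomorphisms, $\kappa := f^{-1}\circ\kappa_Z$ works because the inverse of an isomorphism in $\T\Mod$ is again a $\Pi$-morphism; for composition, the two-step application of convexity (first to $g$ with the images $f(x_i)$, then to $f$) is exactly right, including the inclusion $f^{-1}(\kappa_Z(v)) \subseteq (g\circ f)^{-1}(\kappa_W(v))$; and for pullbacks, the key points — that edges in $W\times_Z X$ are detected componentwise (as in \ref{relational_prod}), that $(w_i,x_i)$ in the pullback forces $f(x_i)=g(w_i)$, and that the paired valuation $v \mapsto (\kappa_W(v),\kappa_X(v))$ lands in the pullback because $\kappa_X(v)\in f^{-1}(g(\kappa_W(v)))$ — are all correctly identified. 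What your route buys is a self-contained argument that does not lean on the general theory of lifting properties; what the paper's route buys is brevity and the observation (which you note at the end) that the whole proposition is an instance of a completely general phenomenon, independent of the particular shape of the morphisms $f_{\Phi,R}$.
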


\begin{proof}
From Remark \ref{lifting_rmk} we know that a morphism of $\T\Mod$ is convex iff it has the weak right lifting property with respect to a certain set of morphisms of $\T\Mod$, and it is well known (and straightforward to prove) that this entails that $\Convex(\T\Mod)$ is a topology.      
\end{proof}

We conclude this section with a useful alternative sufficient condition for $\T\Mod$ to be cartesian closed (cf.~Theorem \ref{lcc_thm}). An object $Y$ of a category $\C$ with finite products is sometimes said to be \emph{exponentiating} if an exponential $\varepsilon_X : Y^X \times X \to Y$ (i.e.~a coreflection of $Y$ along $X \times (-) : \C \to \C$) exists for each $X \in \ob\C$. If $\Pi$ contains only binary relation symbols, then we say that a $\Pi$-structure $X$ is \emph{transitive} if the binary relation $R^X$ on $|X|$ is transitive for each $R \in \Pi$, and we say that a relational Horn theory $\T$ over $\Pi$ is \emph{transitive} if every $\T$-model is transitive. 

\begin{theo}
\label{cart_closed_thm}
Suppose that $\Pi$ contains only binary relation symbols, and let $\T$ be a reflexive relational Horn theory over $\Pi$. Then every transitive $\T$-model is exponentiating. So if $\T$ is a reflexive and transitive relational Horn theory over $\Pi$, then $\T\Mod$ is cartesian closed.
\end{theo}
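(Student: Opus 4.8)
The plan is to prove the key assertion — that every transitive $\T$-model $Y$ is exponentiating — by constructing, for an arbitrary $\T$-model $X$, an explicit exponential $Y^X$ together with an evaluation map $\varepsilon : Y^X \times X \to Y$, and verifying directly the universal property of a coreflection of $Y$ along $X \times (-)$. Since we do not know that $!_X : X \to 1$ is convex (that would give exponentiability of $X$, not what we want here), we cannot invoke Theorem \ref{convex_exp_object}; instead we should exploit transitivity of $Y$. The natural candidate is to take $|Y^X|$ to be $\Str(\Pi)(X, Y)$ as in \ref{partial_product_object}, but to modify the relational structure: for a binary $R \in \Pi$ and $\Pi$-morphisms $h_1, h_2 : X \to Y$, declare $Y^X \models R h_1 h_2$ iff for all $x_1, x_2 \in |X|$ with $X \models R x_1 x_2$ we have $Y \models R h_1(x_1) h_2(x_2)$. (This is exactly the $\Pi$-structure of \ref{partial_product_object}, i.e.~the one inherited from the internal hom of \ref{closed_structure}; note $Y^X$ is reflexive since $Y$ is, using that $\Pi$-morphisms preserve edges, so $Y^X$ is a model of $\T_\Pi$.)

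First I would check that $Y^X$, so defined, is a $\T$-model. Here is where transitivity of $Y$ enters: for each axiom $\Phi \Longrightarrow R v_1 v_2$ of $\T$, given a valuation $\kappa : \Var \to |Y^X|$ with $Y^X \models \kappa \cdot \varphi$ for all $\varphi \in \Phi$, and given $x_1, x_2 \in |X|$ with $X \models R x_1 x_2$, one wants $Y \models R \kappa(v_1)(x_1)\,\kappa(v_2)(x_2)$. The idea is to choose a valuation $\kappa_X : \Var \to |X|$ sending $v_1 \mapsto x_1$, $v_2 \mapsto x_2$, and every other variable of $\Var(\Phi)$ to \emph{either} $x_1$ \emph{or} $x_2$ (this is the only freedom we have, since we cannot lift through $f$ as in the convex case); then post-compose with the evaluations $\kappa(v)$. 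For this to make $X \models \kappa_X \cdot \varphi$ and hence let us feed $\varphi$ through $Y^X \models \kappa \cdot \varphi$, we need each edge $\varphi \equiv S v v' \in \Phi$ to become, after $\kappa_X$, an edge $S x_i x_j$ that actually holds in $X$ — and this is where reflexivity of $X$ ($X \models S x_i x_i$) and transitivity of $X$ (chaining $X \models S x_1 x_1$, $S x_1 x_2$, $S x_2 x_2$, etc.) are used to guarantee the relevant edges hold. Dually, transitivity of $Y$ is used to chain the outputs $Y \models R\,\kappa(v)(x_i)\,\kappa(v')(x_j)$ into the single desired edge $Y \models R\,\kappa(v_1)(x_1)\,\kappa(v_2)(x_2)$; similarly equality axioms $\Phi \Longrightarrow v_1 = v_2$ are handled using $\Var(\Phi) = \{v_1, v_2\}$ (Definition \ref{Horn_theory}) and reflexivity, as in the last paragraph of the proof of Theorem \ref{convex_thm_unified}.

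Once $Y^X$ is known to be a $\T$-model, the rest is a routine verification of the coreflection universal property, parallel to (and simpler than) the proof of Proposition \ref{partial_product_prop_Str} / Proposition \ref{partial_product_prop_unified} in the special case $Z = 1$: given any $\T$-model $Q$ and any $\T$-model morphism $g : Q \times X \to Y$, define $h : Q \to Y^X$ by $h(a) := g(a, -) : X \to Y$ (which is a $\Pi$-morphism since $g$ is and $Q$ is reflexive, so that $Q \times X \models R(a, x_1)(a, x_2)$ whenever $X \models R x_1 x_2$), check that $h$ is a $\Pi$-morphism $Q \to Y^X$ using the definition of the relational structure on $Y^X$ and reflexivity of $Q$, verify $\varepsilon \circ (h \times 1_X) = g$, and note uniqueness is forced by this last equation. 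Then the second sentence of the theorem is immediate: if $\T$ is reflexive and transitive, every $\T$-model is both an object and an exponentiating object, so all exponentials exist and $\T\Mod$ is cartesian closed.

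The main obstacle is the first step — proving $Y^X$ is a $\T$-model — and specifically organizing the argument that, for each body edge $S v v'$ of an axiom $\Phi \Longrightarrow R v_1 v_2$, the chosen ``collapsed'' valuation $\kappa_X$ (mapping $\Var(\Phi)$ into $\{x_1, x_2\}$) actually satisfies $X \models \kappa_X \cdot (S v v')$, and symmetrically that the resulting $Y$-edges can be recombined via transitivity of $Y$ into the conclusion edge. This requires a careful case analysis on which of $v, v'$ equal $v_1$ versus $v_2$ under $\kappa_X$, leaning on reflexivity to get the ``diagonal'' edges $S x_i x_i$ and on transitivity of both $X$ and $Y$ to bridge between $x_1$ and $x_2$; I expect this to be the technical heart of the proof, with everything else being bookkeeping analogous to arguments already given in \S\ref{disc_section}.
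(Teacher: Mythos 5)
Your overall architecture (take $\left|Y^X\right| = \Str(\Pi)(X,Y)$ with the structure of \ref{partial_product_object}, show it is a $\T$-model, then get the universal property as in Proposition \ref{partial_product_prop_unified} with $Z = 1$) matches the paper, and your treatment of the equality axioms and of the coreflection verification is fine. The gap is in the step you yourself identify as the technical heart: verifying the axioms without equality. Your plan is to produce, from $X \models Rx_1x_2$, a ``collapsed'' valuation $\kappa_X : \Var(\Phi) \to \{x_1, x_2\}$ with $X \models \kappa_X \cdot \varphi$ for every premise $\varphi \in \Phi$, and then feed these premises through $Y^X \models \kappa \cdot \varphi$. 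This cannot work for a general reflexive $\T$: a premise $\varphi \equiv S v v'$ with $\kappa_X(v) = x_1$, $\kappa_X(v') = x_2$ requires $X \models S x_1 x_2$, and since $\Pi$ is discrete and the axioms are arbitrary, this does not follow from $X \models R x_1 x_2$ when $S \neq R$ (consider e.g.\ a reflexive theory with two binary symbols and the axiom $xSy \Longrightarrow xRy$: the forced valuation needs $X \models Sx_1x_2$, which is simply not available). Reflexivity only yields the diagonal edges, and the transitivity of $X$ you invoke to ``bridge'' is not even a hypothesis: the first assertion of Theorem \ref{cart_closed_thm} assumes only that $Y$ is transitive, while $X$ ranges over \emph{all} models of the merely reflexive theory $\T$. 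In effect your strategy reproves the safe-axiom case (Proposition \ref{safe_object_prop}) rather than the theorem.

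The paper's proof avoids looking for any valuation in $X$ at all. The key lemma is that, when $X$ is reflexive and $Y$ is transitive, the structure of \ref{partial_product_object} on $Y^X$ is \emph{pointwise}: $Y^X \models Rh_1h_2$ iff $Y \models Rh_1(x)h_2(x)$ for all $x \in |X|$ (forward direction by reflexivity of $X$, i.e.\ $X \models Rxx$; backward direction by $Y \models Rh_1(x_1)h_2(x_1)$, then $Y \models Rh_2(x_1)h_2(x_2)$ since $h_2$ is a $\Pi$-morphism, then transitivity of $Y$). Once this is in place, every axiom $\Phi \Longrightarrow \psi$ is verified by evaluating the given valuation $\kappa : \Var \to \left|Y^X\right|$ at a \emph{single} point $x$, giving $\kappa_x(v) := \kappa(v)(x)$ in $|Y|$; the premises hold pointwise by the lemma, $Y$ is a $\T$-model, and the conclusion holds pointwise, hence in $Y^X$ by the lemma again. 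No case analysis, no premises to realize in $X$, and no transitivity of $X$ is needed. You should replace your collapsing argument with this pointwise characterization.
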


\begin{proof}
Let $Y$ be a transitive $\T$-model and let $X$ be a $\T$-model. We will show that the $\Pi$-structure $Y^X$ of \ref{partial_product_object} is a $\T$-model, so that the desired result will then follow by Proposition~\ref{partial_product_prop_unified}. We first claim that for each $R \in \Pi$ and all $h_1, h_2 \in \left|Y^X\right| = \Str(\Pi)(X, Y)$, we have $Y^X \models Rh_1h_2$ (as in \ref{partial_product_object}) iff $Y \models Rh_1(x)h_2(x)$ for all $x \in |X|$. Suppose first that $Y^X \models Rh_1h_2$ as in \ref{partial_product_object}, and let $x \in |X|$. Then since $X \models Rxx$, we deduce that $Y \models Rh_1(x)h_2(x)$, as desired. Conversely, suppose that $Y \models Rh_1(x)h_2(x)$ for each $x \in |X|$, and suppose that $X \models Rx_1x_2$; we must show that $Y \models Rh_1(x_1)h_2(x_2)$. In particular we have $Y \models Rh_1(x_1)h_2(x_1)$. Since $h_2 : X \to Y$ is a $\Pi$-morphism, we also have $Y \models Rh_2(x_1)h_2(x_2)$. Then from transitivity of $Y$ we obtain $Y \models Rh_1(x_1)h_2(x_2)$, as desired. 

We now show that $Y^X$ is a $\T$-model. For any $\Pi$-edge $(R, (h_1, h_2))$ in $\left|Y^X\right| = \Str(\Pi)(X, Y)$ and any $x \in |X|$, we write $(R, (h_1, h_2))(x)$ for the $\Pi$-edge $(R, (h_1(x), h_2(x)))$ in $|Y|$. Let $\Phi \Longrightarrow \psi$ be an axiom of $\T\setminus\T_\Pi$, and let $\kappa : \Var \to \left|Y^X\right| = \Str(\Pi)(X, Y)$ be a valuation such that $\kappa \cdot \varphi \in \E\left(Y^X\right)$ for each $\varphi \in \Phi$. Supposing first that $\Phi \Longrightarrow \psi$ is without equality, we must show that $\kappa \cdot \psi \in \E\left(Y^X\right)$. By the previous paragraph, this means showing for each $x \in |X|$ that $(\kappa \cdot \psi)(x) \in \E(Y)$. Given $x \in |X|$, we have a valuation $\kappa_x : \Var \to |Y|$ defined by $\kappa_x(v) := \kappa(v)(x)$ for each $v \in \Var$. For each $\varphi \in \Phi$ we have $\kappa_x \cdot \varphi = (\kappa \cdot \varphi)(x) \in \E(Y)$ by hypothesis (and the previous paragraph), so that $(\kappa \cdot \psi)(x) = \kappa_x \cdot \psi \in \E(Y)$ because $Y$ is a $\T$-model, as desired. Now suppose that $\psi \equiv v_1 = v_2$, and let us show that $\kappa(v_1) = \kappa(v_2)$, i.e.~that $\kappa(v_1)(x) = \kappa(v_2)(x)$ for all $x \in |X|$. As above, we have the valuation $\kappa_x : \Var \to |Y|$ with $\kappa_x(v) = \kappa(v)(x)$ for each $v \in \Var$, and for each $\varphi \in \Phi$ we have $Y \models \kappa_x \cdot \varphi$, so that $\kappa_x(v_1) = \kappa_x(v_2)$ because $Y$ is a $\T$-model, i.e. $\kappa(v_1)(x) = \kappa(v_2)(x)$, as desired.   
\end{proof}

\begin{egg}
\label{trans_egg}
{\em
In Example \ref{preord_cc} we noted that one explanation for the cartesian closure of $\Preord$ and $\Pos$ is the fact that the axioms without equality of the corresponding relational Horn theories are all safe. Since these relational Horn theories are reflexive and transitive (and their relational signature contains only binary relation symbols), Theorem \ref{cart_closed_thm} also provides another explanation for the cartesian closure of $\Preord$ and $\Pos$.
}
\end{egg}

\begin{rmk}
\label{cart_closed_rmk}
{\em
The condition that $\T$ be (reflexive and) transitive is sufficient (by Theorem \ref{cart_closed_thm}) but certainly not \emph{necessary} for $\T\Mod$ to be cartesian closed. For example, if $\Pi$ contains a single binary relation symbol $R$, then $\T_\Pi$ consists of the single axiom $\Longrightarrow Rxx$, and $\T_\Pi\Mod$ is (locally) cartesian closed by Theorem \ref{lcc_thm}, even though there clearly exist $\T_\Pi$-models that are not transitive (i.e.~sets equipped with a binary reflexive relation that is not transitive).
}
\end{rmk}

\section{Convexity in the non-discrete case}
\label{non_disc_section}

In \S\ref{disc_section} we considered reflexive relational Horn theories $\T$ over discrete preordered relational signatures $\Pi$, and we provided sufficient conditions for objects and morphisms of $\T\Mod$ to be exponentiable. In this final section we shall consider the case where the preordered relational signature $\Pi$ is not discrete. In fact, we shall suppose throughout this section that the preordered relational signature $\Pi$ is a \emph{complete Heyting algebra} \eqref{preord_sig}. For example, if $(\V, \leq, \tensor, \sfk)$ is a commutative unital quantale such that $(\V, \leq)$ is a complete Heyting algebra, then the preordered relational signature $\Pi_\V$ of Example \ref{without_equality_examples}.\ref{quantale} is a complete Heyting algebra (see Example \ref{preord_sig_egg}). Note that if $\sfk = \top$ (the top element of $\V$), then we may identify the relational Horn theory $\T_{\Pi_\V}$ of Definition \ref{T_Pi} with the relational Horn theory $\T_{\V\RGph}$ of Example \ref{without_equality_examples}.\ref{quantale}\footnote{Technically $\T_{\Pi_\V}$ contains the reflexivity axioms $\Longrightarrow x \sim_v x$ for all $v \in \V$, whereas the only reflexivity axiom that $\T_{\V\RGph}$ contains is $\Longrightarrow x \sim_\sfk x$; but since $\sfk = \top$, it follows that $\T_{\V\RGph}$ entails the relational Horn formulas $\Longrightarrow x \sim_v x$ for all $v \in \V$, so we can assume w.l.o.g.~that they are axioms of $\T_{\V\RGph}$.}.   

We shall identify a useful sufficient condition for a morphism of $\T\Mod$ to be exponentiable, where $\T$ is a \emph{schematic extension} of $\T_\Pi$ (see Definition \ref{schematic_theory}). As a result, we shall recover a known sufficient condition for a morphism of $\V\Cat$ to be exponentiable, where $(\V, \leq, \tensor, \sfk)$ is a commutative unital quantale such that $(\V, \leq)$ is a complete Heyting algebra and $\sfk = \top$ (see \cite[Theorem 3.4]{Exponentiation_in_V_categories}, where it is shown that this condition is also \emph{necessary} for exponentiability). In fact, our approach in this section is greatly influenced by the characterization of exponentiable objects and morphisms in $\V\Cat \cong \T_{\V\Cat}\Mod$ (see Example \ref{without_equality_examples}.\ref{quantale}), as the reader can hopefully glean from Examples \ref{schema_egg}, \ref{schema_convex_egg}, and \ref{schematic_egg} below.

\begin{defn}
\label{axiom_schema}
{\em
Let $n \geq 1$, and let $\calS$ be a relation symbol of arity $n$ that is not in $\Pi$. An \textbf{$n$-ary axiom schema over $\Pi$} is a triple $(\Phi, \psi, \sigma)$ consisting of a set $\Phi$ of $\{\calS\}$-edges in $\Var$, a $\{\calS\}$-edge $\psi$ in $\Var$, and a function $\sigma : \Pi(n)^\Phi \to \Pi(n)$. Given a $\{\calS\}$-edge $\varphi \equiv \calS v_1 \ldots v_n$ in $\Var$ and an element $R \in \Pi(n)$, we let $\varphi_R$ be the $\Pi$-edge $Rv_1 \ldots v_n$ in $\Var$. Given a set $\Phi$ of $\{\calS\}$-edges in $\Var$ and a tuple $\overline{R} = \left(R_\varphi\right)_{\varphi \in \Phi} \in \Pi(n)^\Phi$, we define the set of $\Pi$-edges $\Phi_{\Rbar} := \{\varphi_{R_\varphi} \mid \varphi \in \Phi\}$ in $\Var$. An \textbf{instance} of an $n$-ary axiom schema $(\Phi, \psi, \sigma)$ over $\Pi$ is a pair $\left(\Rbar \in \Pi(n)^\Phi, \Phi_{\Rbar} \Longrightarrow \psi_{\sigma\left(\Rbar\right)}\right)$. An \textbf{axiom schema over $\Pi$} is an $n$-ary axiom schema over $\Pi$ for some $n \geq 1$.      
}
\end{defn}

\begin{egg}
\label{schema_egg}
{\em
Let $(\V, \leq, \tensor, \top)$ be a commutative unital quantale such that $(\V, \leq)$ is a complete Heyting algebra.
The \emph{generalized transitivity} axiom schema is the binary axiom schema $\left(\{x \calS y, y \calS z\}, x \calS z, \sigma\right)$ over $\Pi_\V$, where $\sigma : \Pi_\V(2) \times \Pi_\V(2) \to \Pi_\V(2)$ is given by 

\noindent $\sigma(\sim_v, \sim_{v'}) := \ \sim_{v \tensor v'}$ for $v, v' \in \V$. An instance of the generalized transitivity axiom schema is thus (or may be identified with) a pair $\left((v, v') \in \V^2, \{x \sim_v y, y \sim_{v'} z\} \Longrightarrow x \sim_{v \tensor v'} z\right)$. 

The \emph{symmetry} axiom schema is the binary axiom schema $\left(\{x \calS y\}, y \calS x, 1_{\Pi_\V(2)}\right)$ over $\Pi_\V$, an instance of which is a pair $\left(v \in \V, x \sim_v y \Longrightarrow y \sim_v x\right)$. 
}
\end{egg}

\noindent For any set $\Phi$ and $n \geq 1$ and $\Rbar, \Sbar \in \Pi(n)^\Phi$, we define $\Rbar \wedge \Sbar := \left(R_\varphi \wedge S_\varphi\right)_{\varphi \in \Phi} \in \Pi(n)^\Phi$.

\begin{defn}
\label{schema_convex}
{\em
Let $f : X \to Z$ be a morphism of $\T_\Pi$-models, let $(\Phi, \calS v_1 \ldots v_n, \sigma)$ be an $n$-ary axiom schema over $\Pi$, and let $\left(\Rbar \in \Pi(n)^\Phi, \Phi_{\Rbar} \Longrightarrow \sigma\left(\Rbar\right)v_1 \ldots v_n\right)$ be an instance. We say that $f$ is \textbf{convex with respect to the instance $\left(\Rbar, \Phi_{\Rbar} \Longrightarrow \sigma\left(\Rbar\right)v_1 \ldots v_n\right)$ of the axiom schema $(\Phi, \calS v_1 \ldots v_n, \sigma)$} if $f$ satisfies the following condition:

Let $\kappa_Z : \Var \to |Z|$ be a valuation satisfying $Z \models \kappa_Z \cdot \varphi_{R_\varphi}$ for each $\varphi \in \Phi$, and let $x_i \in f^{-1}(\kappa_Z(v_i))$ for each $1 \leq i \leq n$. We say that a valuation $\kappa : \Var \to |X|$ is \textbf{good} if $\kappa(v_i) = x_i$ for each $1 \leq i \leq n$ and $\kappa(v) \in f^{-1}(\kappa_Z(v))$ for all $v \in \Var(\Phi)\setminus\{v_1, \ldots, v_n\}$. For each good valuation $\kappa : \Var \to |X|$, we define
\[ R_\kappa := \bigvee\left\{\sigma\left(\Rbar \wedge \Sbar\right) \mid \Sbar \in \Pi(n)^\Phi \text{ and } X \models \kappa \cdot \varphi_{S_\varphi} \ \forall \varphi \in \Phi\right\}. \]
Then for each $T \in \Pi(n)$ such that $T \leq \sigma\left(\Rbar\right)$ and $X \models Tx_1\ldots x_n$, we require that
\[ T \leq \bigvee\left\{ R_{\kappa} \mid \kappa : \Var \to |X| \text{ is good} \right\}. \] We say that $f$ is \textbf{convex with respect to an axiom schema} if $f$ is convex with respect to each instance of the axiom schema.  
}
\end{defn}

\begin{egg}
\label{schema_convex_egg}
{\em
Let $(\V, \leq, \tensor, \top)$ be a commutative unital quantale such that $(\V, \leq)$ is a complete Heyting algebra. Let $f : X \to Z$ be a morphism of $\T_{\Pi_\V}$-models (i.e.~$\T_{\V\RGph}$-models), so that $f : (|X|, d_X) \to (|Z|, d_Z)$ is a $\V$-functor between the corresponding reflexive $\V$-graphs; see Example \ref{without_equality_examples}.\ref{quantale}. For any $\T_{\Pi_\V}$-model (i.e.~$\T_{\V\RGph}$-model) $Y$ and $y, y' \in |Y|$, we have $d_Y(y, y') = \bigvee\{u \in \V \mid Y \models y \sim_u y'\}$ (see \cite[Appendix]{Extensivity}), so for any $v \in \V$ it readily follows that $v \leq d_Y(y, y')$ iff $Y \models y \sim_v y'$. Consider the generalized transitivity axiom schema $\left(\{x \calS y, y \calS z\}, x \calS z, \sigma\right)$ over $\Pi_\V$ from Example \ref{schema_egg}.  

We now verify that $f : X \to Z$ is convex with respect to each instance of the generalized transitivity axiom schema iff $f : (|X|, d_X) \to (|Z|, d_Z)$ satisfies the condition of \cite[Theorem 3.4]{Exponentiation_in_V_categories}, namely that for all $x_1, x_3 \in |X|$, $z_2 \in |Z|$, and $v, v' \in \V$ with $v \leq d_Z(f(x_1), z_2)$ and $v' \leq d_Z(z_2, f(x_2))$, we have
\[ d_X(x_1, x_3) \wedge (v \tensor v') \leq \bigvee_{x_2 \in f^{-1}(z_2)} \left(d_X(x_1, x_2) \wedge v\right) \tensor \left(d_X(x_2, x_3) \wedge v'\right). \] 
Suppose first that $f$ is convex with respect to each instance of the generalized transitivity axiom schema, and let $x_1, x_3 \in |X|$, $z_2 \in |Z|$, and $v, v' \in \V$ be as in the condition. Then we have $Z \models f(x_1) \sim_v z_2$ and $Z \models z_2 \sim_{v'} f(x_2)$, and since $d_X(x_1, x_3) \wedge (v \tensor v') \leq v \tensor v'$ and $X \models x_1 \sim_{d_X(x_1, x_3) \wedge (v \tensor v')} x_3$, we deduce from the hypothesis on $f$ that 
\[ d_X(x_1, x_3) \wedge (v \tensor v') \] \[ \leq \bigvee_{x_2 \in f^{-1}(z_2)}\left\{(u \wedge v) \tensor (u' \wedge v') \mid u, u' \in \V \text{ and } X \models x_1 \sim_u x_2 \text{ and } X \models x_2 \sim_{u'} x_3\right\}. \] Now for each $x_2 \in f^{-1}(z_2)$ we have
\begin{align*}
&\ \ \ \ \left(d_X(x_1, x_2) \wedge v\right) \tensor \left(d_X(x_2, x_3) \wedge v'\right) \\	
&= \left(\bigvee\{u \in \V \mid X \models x_1 \sim_u x_2\} \wedge v\right) \tensor \left(\bigvee\{u' \in \V \mid X \models x_2 \sim_{u'} x_3\} \wedge v'\right) \\
&= \bigvee\left\{u \wedge v \mid u \in \V \text{ and } X \models x_1 \sim_u x_2\right\} \tensor \bigvee\left\{u' \wedge v' \mid u' \in \V \text{ and } X \models x_2 \sim_{u'} x_3\right\} \\
&= \left\{(u \wedge v) \tensor (u' \wedge v') \mid u, u' \in \V \text{ and } X \models x_1 \sim_u x_2 \text{ and } X \models x_2 \sim_{u'} x_3\right\}, 
\end{align*}
where the second equality holds because $\V$ is a Heyting algebra and the last because $\V$ is a quantale. Thus, $f$ satisfies the condition of \cite[Theorem 3.4]{Exponentiation_in_V_categories}. Conversely, suppose that $f$ satisfies this condition, let $v, v' \in \V$, and let $x_1, x_3 \in |X|$ and $z_2 \in |Z|$ be such that $Z \models f(x_1) \sim_v z_2$ and $Z \models z_2 \sim_{v'} f(x_3)$. For each $w \leq v \tensor v'$ such that $X \models x_1 \sim_w x_3$, we must show that 
\[ w \leq \bigvee_{x_2 \in f^{-1}(z_2)}\left\{(u \wedge v) \tensor (u' \wedge v') \mid u, u' \in \V \text{ and } X \models x_1 \sim_u x_2 \text{ and } X \models x_2 \sim_{u'} x_3\right\}, \] i.e.~that $w \leq \bigvee_{x_2 \in f^{-1}(z_2)}\left(d_X(x_1, x_2) \wedge v\right) \tensor \left(d_X(x_2, x_3) \wedge v'\right)$ (by the calculation above). By hypothesis on $Z$ we have $v \leq d_Z(f(x_1), z_2)$ and $v' \leq d_Z(z_2, f(x_3))$, so we deduce from the condition on $f$ that
\[ d_X(x_1, x_3) \wedge (v \tensor v') \leq \bigvee_{x_2 \in f^{-1}(z_2)} \left(d_X(x_1, x_2) \wedge v\right) \tensor \left(d_X(x_2, x_3) \wedge v'\right), \] which yields the result because $w \leq d_X(x_1, x_3)$ (since $X \models x_1 \sim_w x_3$) and $w \leq v \tensor v'$.  
}
\end{egg}

\begin{defn}
\label{schematic_theory}
{\em
Let $\T$ be a relational Horn theory over $\Pi$. We say that $\T$ is a \textbf{schematic extension of $\T_\Pi$ given by a set $\mathscr{S} = \left\{(\Phi_s, \psi_s, \sigma_s) \mid s \in \scrS\right\}$ of axiom schemas over $\Pi$} if the axioms of $\T$ are those of $\T_\Pi$, together with all instances of all axiom schemas in $\scrS$, together with (perhaps) some axioms with equality. We say that a morphism of $\T$-models is \textbf{convex} if it is convex with respect to each axiom schema in $\scrS$.  
}
\end{defn}

\begin{egg}
\label{schematic_egg}
{\em
Let $(\V, \leq, \tensor, \top)$ be a commutative unital quantale such that $(\V, \leq)$ is a complete Heyting algebra. The relational Horn theory $\T_{\V\Cat}$ over $\Pi_\V$ of Example \ref{without_equality_examples}.\ref{quantale} is a schematic extension of $\T_{\Pi_\V} = \T_{\V\RGph}$ given by the set consisting of just the generalized transitivity axiom schema of Example \ref{schema_egg}. The relational Horn theory $\T_{\PMet_\V}$ over $\Pi_\V$ of Example \ref{without_equality_examples}.\ref{quantale} is a schematic extension of $\T_{\Pi_\V} = \T_{\V\RGph}$ given by the set consisting of the generalized transitivity axiom schema and the symmetry axiom schema of Example \ref{schema_egg}. The relational Horn theory $\T_{\Met_\V}$ over $\Pi_\V$ of Example \ref{without_equality_examples}.\ref{quantale} is a schematic extension of $\T_{\Pi_\V} = \T_{\V\RGph}$ given by the set consisting of the generalized transitivity axiom schema and the symmetry axiom schema of Example \ref{schema_egg}, together with the axiom $x \sim_\top y \Longrightarrow x = y$.   
}
\end{egg}

\begin{theo}
\label{schema_convex_part_prod}
Let $\T$ be a schematic extension of $\T_\Pi$ given by a set $\scrS$ of axiom schemas over $\Pi$, and let $f : X \to Z$ be a convex morphism of $\T$-models. Then for every $\T$-model $Y$, the $\Pi$-structure $P = P(Y, f)$ of \eqref{partial_product_unified} is a model of $\T$.  
\end{theo}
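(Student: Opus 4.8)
The plan is to follow the structure of the proof of Theorem \ref{convex_thm_unified}, the extra wrinkle being that, since $\Pi$ is a complete Heyting algebra, convexity is phrased via suprema (Definition \ref{schema_convex}). By Proposition \ref{partial_product_is_model} the $\Pi$-structure $P$ is already a model of $\T_\Pi$, so it suffices to verify that $P$ satisfies every instance of every axiom schema in $\scrS$, together with every axiom of $\T$ with equality; the claim that $P$ is then a partial product of $Y$ over $f$ in $\T\Mod$ follows from Proposition \ref{partial_product_prop_unified} (a schematic extension of $\T_\Pi$ being in particular an extension of $\T_\Pi$). The equality axioms are handled exactly as in the corresponding part of the proof of Theorem \ref{convex_thm_unified}: given an axiom $\Phi \Longrightarrow x = y$ (so $\Var(\Phi) = \{x, y\}$ by our standing assumption) and a valuation $\lambda : \Var \to |P|$ with $P \models \lambda \cdot \varphi$ for each $\varphi \in \Phi$, one first uses that $p : P \to Z$ is a $\Pi$-morphism and $Z$ a $\T$-model to see that $\lambda(x)$ and $\lambda(y)$ share a second component $z$; then, for each $a \in f^{-1}(z)$, one forms the valuation $\kappa_a : \Var \to |Y|$ sending $x, y$ to the values at $a$ of the first components of $\lambda(x), \lambda(y)$, checks $Y \models \kappa_a \cdot \varphi$ for each $\varphi \in \Phi$ (using the defining clause of the relations of $P$, reflexivity of $X$, and $a \in f^{-1}(z)$), and concludes $\lambda(x) = \lambda(y)$ since $Y$ is a $\T$-model.

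Now fix an instance $\left(\Rbar \in \Pi(n)^\Phi,\ \Phi_{\Rbar} \Longrightarrow \sigma(\Rbar)v_1\ldots v_n\right)$ of an $n$-ary axiom schema $(\Phi, \calS v_1\ldots v_n, \sigma)$ in $\scrS$, and let $\lambda : \Var \to |P|$ be a valuation with $P \models \lambda \cdot \varphi_{R_\varphi}$ for each $\varphi \in \Phi$; write $\lambda(v) = (j_v, z_v)$, so $z_v = (p \circ \lambda)(v)$. We must show $P \models \sigma(\Rbar)(j_{v_1}, z_{v_1})\ldots(j_{v_n}, z_{v_n})$. That $Z \models \sigma(\Rbar) z_{v_1}\ldots z_{v_n}$ follows as usual from $p$ being a $\Pi$-morphism and $Z$ a $\T$-model. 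For the remaining clause of Definition \ref{partial_product_unified}, fix $x_i \in f^{-1}(z_{v_i})$ for $1 \leq i \leq n$ and $T \in \Pi(n)$ with $T \leq \sigma(\Rbar)$ and $X \models Tx_1\ldots x_n$; we must show $Y \models T j_{v_1}(x_1)\ldots j_{v_n}(x_n)$. Setting $\kappa_Z := p \circ \lambda$ (which satisfies $Z \models \kappa_Z \cdot \varphi_{R_\varphi}$ for each $\varphi \in \Phi$), convexity of $f$ with respect to this instance yields $T \leq \bigvee\{R_\kappa \mid \kappa : \Var \to |X| \text{ good}\}$, where $R_\kappa = \bigvee\{\sigma(\Rbar \wedge \Sbar) \mid \Sbar \in \Pi(n)^\Phi,\ X \models \kappa \cdot \varphi_{S_\varphi}\ \forall \varphi \in \Phi\}$. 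Since $Y$ is a model of $\T_\Pi$ over the complete Heyting algebra $\Pi$, the order and join axioms of $\T_\Pi$ let us reduce to showing, for each good $\kappa$ and each $\Sbar \in \Pi(n)^\Phi$ with $X \models \kappa \cdot \varphi_{S_\varphi}$ for all $\varphi \in \Phi$, that $Y \models \sigma(\Rbar \wedge \Sbar) j_{v_1}(x_1)\ldots j_{v_n}(x_n)$.

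To prove this, define $\kappa' : \Var \to |Y|$ by $\kappa'(v) := j_v(\kappa(v))$ for $v \in \Var(\Phi) \cup \{v_1, \ldots, v_n\}$ — this is legitimate since, $\kappa$ being good, $\kappa(v) \in f^{-1}(z_v)$, the underlying set of the domain $X_{f, z_v}$ of $j_v$ — extended arbitrarily elsewhere, so that $\kappa'(v_i) = j_{v_i}(x_i)$. The heart of the matter is that $Y \models \kappa' \cdot \varphi_{R_\varphi \wedge S_\varphi}$ for each $\varphi \in \Phi$: writing $\varphi \equiv \calS w_1\ldots w_n$, one has $R_\varphi \wedge S_\varphi \leq R_\varphi$ and, since $R_\varphi \wedge S_\varphi \leq S_\varphi$, $X \models S_\varphi \kappa(w_1)\ldots\kappa(w_n)$, and $X$ is a $\T_\Pi$-model, also $X \models (R_\varphi \wedge S_\varphi)\kappa(w_1)\ldots\kappa(w_n)$, while $\kappa(w_k) \in f^{-1}(z_{w_k})$; so the defining clause of $P \models R_\varphi(j_{w_1}, z_{w_1})\ldots(j_{w_n}, z_{w_n})$ (which holds as $P \models \lambda \cdot \varphi_{R_\varphi}$) gives $Y \models (R_\varphi \wedge S_\varphi) j_{w_1}(\kappa(w_1))\ldots j_{w_n}(\kappa(w_n))$, i.e.~$Y \models \kappa' \cdot \varphi_{R_\varphi \wedge S_\varphi}$. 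Since $\left(\Rbar \wedge \Sbar,\ \Phi_{\Rbar \wedge \Sbar} \Longrightarrow \sigma(\Rbar \wedge \Sbar)v_1\ldots v_n\right)$ is again an instance of the same axiom schema, hence an axiom of $\T$, and $Y$ is a $\T$-model, we conclude $Y \models \sigma(\Rbar \wedge \Sbar)\kappa'(v_1)\ldots\kappa'(v_n) = \sigma(\Rbar \wedge \Sbar) j_{v_1}(x_1)\ldots j_{v_n}(x_n)$, as required.

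The main obstacle is bookkeeping rather than conceptual: one must carefully track the nested suprema (over good $\kappa$, over $\Sbar$ inside each $R_\kappa$, and the implicit joins built into the complete-Heyting-algebra structure of $\Pi$) and check at each step that the valuation $\kappa'$ into $|Y|$ is well defined and that the defining clause for the relation symbols of $P$ is being invoked with the right data. The genuinely new ingredient beyond the discrete case of Theorem \ref{convex_thm_unified} is that, in order to exploit convexity, one feeds the intermediate meets $\Rbar \wedge \Sbar$ back through the very same axiom schema — which is precisely what dictates the somewhat intricate form of Definition \ref{schema_convex}.
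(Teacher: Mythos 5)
Your proposal is correct and follows essentially the same route as the paper's proof: reduce via Proposition \ref{partial_product_is_model} and the equality-axiom argument of Theorem \ref{convex_thm_unified}, use convexity plus the join/order axioms of $\T_\Pi$ in $Y$ to reduce to $Y \models \sigma\left(\Rbar \wedge \Sbar\right)j_{v_1}(x_1)\ldots j_{v_n}(x_n)$, and then apply the instance $\left(\Rbar \wedge \Sbar, \Phi_{\Rbar \wedge \Sbar} \Longrightarrow \sigma\left(\Rbar \wedge \Sbar\right)v_1\ldots v_n\right)$ to the valuation $v \mapsto j_v(\kappa(v))$. The only (cosmetic) difference is that you verify $Y \models \kappa' \cdot \varphi_{R_\varphi \wedge S_\varphi}$ directly from the defining clause of the relations on $P$, whereas the paper factors this step through the pullback $P \times_Z X$ and the $\Pi$-morphism $\varepsilon$; the two verifications amount to the same computation.
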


\begin{proof}
In view of Proposition \ref{partial_product_is_model}, it remains to show that $P$ is a model of each instance of each axiom schema in $\scrS$, and of each axiom of $\T$ with equality. The latter assertion is proved exactly as in the proof of Theorem \ref{convex_thm_unified}. So let $(\Phi, \calS v_1 \ldots v_n, \sigma)$ be an axiom schema in $\scrS$, and let $\left(\Rbar \in \Pi(n)^\Phi, \Phi_{\Rbar} \Longrightarrow \sigma\left(\Rbar\right)v_1\ldots v_n\right)$ be an instance. Let $\lambda : \Var \to |P|$ be a valuation such that $P \models \lambda \cdot \varphi_{R_\varphi}$ for each $\varphi \in \Phi$, and let us show that $P \models \sigma\left(\Rbar\right)\lambda(v_1)\ldots\lambda(v_n)$. Let $\lambda(v_i) := (j_i, z_i)$ for each $1 \leq i \leq n$, so that we must show $P \models \sigma\left(\Rbar\right)(j_1, z_1)\ldots(j_n, z_n)$. The proof that $Z \models \sigma\left(\Rbar\right)z_1\ldots z_n$ is exactly as in the proof of Theorem \ref{convex_thm_unified}; in particular, we have the valuation $\kappa_Z := p \circ \lambda : \Var \to |Z|$ satisfying $Z \models \kappa_Z \cdot \varphi_{R_\varphi}$ for each $\varphi \in \Phi$. Now let $x_i \in f^{-1}(z_i) = f^{-1}(\kappa_Z(v_i))$ for each $1 \leq i \leq n$, let $T \leq \sigma\left(\Rbar\right)$ be such that $X \models Tx_1\ldots x_n$, and let us show that $Y \models Tj_1(x_1)\ldots j_n(x_n)$. Because $f$ is convex and $Y$ is a $\T_\Pi$-model, this will be true if $Y \models R_\kappa j_1(x_1)\ldots j_n(x_n)$ for every good valuation $\kappa : \Var \to |X|$. Given a good valuation $\kappa : \Var \to |X|$, let $\Sbar \in \Pi(n)^\Phi$ satisfy $X \models \kappa \cdot \varphi_{S_\varphi}$ for each $\varphi \in \Phi$; since $Y$ is a $\T_\Pi$-model, it then suffices to show that $Y \models \sigma\left(\Rbar \wedge \Sbar\right) j_1(x_1)\ldots j_n(x_n)$.  

For each $\varphi \in \Phi$, it follows from $P \models \lambda \cdot \varphi_{R_\varphi}$ and the fact that $P$ is a $\T_\Pi$-model that $P \models \lambda \cdot \varphi_{(R_\varphi \wedge S_\varphi)}$. Similarly, for each $\varphi \in \Phi$ we have $X \models \kappa \cdot \varphi_{S_\varphi}$ and hence $X \models \kappa \cdot \varphi_{(R_\varphi \wedge S_\varphi)}$. We have a valuation $\kappa' : \Var \to |P \times_Z X|$ given by $\kappa'(v) := (\lambda(v), \kappa(v))$ for each $v \in \Var$ that therefore satisfies $P \times_Z X \models \kappa' \cdot \varphi_{(R_\varphi \wedge S_\varphi)}$ for each $\varphi \in \Phi$. Since $\varepsilon : P \times_Z X \to Y$ is a $\Pi$-morphism, we then deduce that $Y \models (\varepsilon \circ \kappa') \cdot \varphi_{(R_\varphi \wedge S_\varphi)}$ for each $\varphi \in \Phi$. Because $Y$ is a $\T$-model and $\left(\Rbar \wedge \Sbar \in \Pi(n)^\Phi, \Phi_{\Rbar \wedge \Sbar} \Longrightarrow \sigma\left(\Rbar \wedge \Sbar\right)v_1\ldots v_n\right)$ is an instance of the given axiom schema $(\Phi, \calS v_1 \ldots v_n, \sigma)$ of $\scrS$, it then follows that $Y \models \sigma\left(\Rbar \wedge \Sbar\right)\varepsilon(\kappa'(v_1))\ldots \varepsilon(\kappa'(v_n))$, i.e.~that $Y \models \sigma\left(\Rbar \wedge \Sbar\right) j_1(x_1)\ldots j_n(x_n)$, as desired.           
\end{proof}

\noindent From \ref{partial_prod_para}, Proposition  \ref{partial_product_prop_unified}, and Theorem \ref{schema_convex_part_prod} we immediately obtain the following theorem:

\begin{theo}
\label{schema_convex_exp}
Let $\T$ be a schematic extension of $\T_\Pi$. Then every convex morphism of $\T\Mod$ is exponentiable. \qed
\end{theo}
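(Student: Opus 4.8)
The statement to prove is Theorem~\ref{schema_convex_exp}: for a schematic extension $\T$ of $\T_\Pi$, every convex morphism of $\T\Mod$ is exponentiable. The plan is almost immediate from the machinery already set up, so the proof will be a short chaining-together of earlier results.

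First I would recall (from \ref{partial_prod_para}) that a morphism $f$ of a finitely complete category is exponentiable if and only if that category has all partial products over $f$ — i.e.~every object admits a partial product over $f$. So it suffices to show that for a convex morphism $f : X \to Z$ of $\T\Mod$ and an arbitrary $\T$-model $Y$, there exists a partial product of $Y$ over $f$ in $\T\Mod$. Second, I would invoke Theorem~\ref{schema_convex_part_prod}, which tells us precisely that the candidate $\Pi$-structure $P = P(Y, f)$ of Definition~\ref{partial_product_unified} is a model of $\T$ (this uses convexity of $f$ together with the fact that $Y$ is a $\T$-model; the $\T_\Pi$-part was already handled by Proposition~\ref{partial_product_is_model}). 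Third, since $\T$ is a schematic extension of $\T_\Pi$, it is in particular an extension of $\T_\Pi$ over $\Pi$ in the sense defined just before Proposition~\ref{partial_product_prop_unified}, so Proposition~\ref{partial_product_prop_unified} applies: given that $P(Y,f)$ is a $\T$-model, it is a partial product of $Y$ over $f$ in $\T\Mod$. Since $Y$ was arbitrary, $\T\Mod$ has all partial products over $f$, and hence $f$ is exponentiable.

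There is no real obstacle here — all the work has been done in Theorem~\ref{schema_convex_part_prod} (which is the genuinely substantive result, carrying the careful bookkeeping with good valuations, the meets $\Rbar \wedge \Sbar$, and the Heyting-algebra joins) and in the infrastructure of \S\ref{exponentiable_Tmod}. The only thing to be slightly careful about is confirming that "schematic extension of $\T_\Pi$" indeed entails "extension of $\T_\Pi$ over $\Pi$" (it does, by Definition~\ref{schematic_theory}, since the axioms of $\T$ include all axioms of $\T_\Pi$), so that Proposition~\ref{partial_product_prop_unified} is legitimately applicable. So the proof is simply: combine \ref{partial_prod_para}, Theorem~\ref{schema_convex_part_prod}, and Proposition~\ref{partial_product_prop_unified}. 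Indeed the excerpt itself flags this, saying the theorem follows "immediately" from those three ingredients, so the write-up can be a single sentence or two with the appropriate cross-references, exactly paralleling the deduction of Theorem~\ref{convex_exp_unified} from Theorem~\ref{convex_thm_unified} in the discrete case.
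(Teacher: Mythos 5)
Your proposal is correct and follows exactly the paper's intended argument: the theorem is stated as an immediate consequence of \ref{partial_prod_para}, Proposition \ref{partial_product_prop_unified}, and Theorem \ref{schema_convex_part_prod}, which is precisely the chain you assemble. Your observation that a schematic extension of $\T_\Pi$ is in particular an extension of $\T_\Pi$ over $\Pi$ (so that Proposition \ref{partial_product_prop_unified} applies) is the only point needing verification, and you handle it correctly.
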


\begin{rmk}
\label{nec_rmk_2}
{\em
For certain examples of schematic extensions $\T$ of $\T_\Pi$, it is known that convexity of a morphism of $\T\Mod$ is not only sufficient but also \emph{necessary} for its exponentiability. For example, when $\T = \T_{\V\Cat}$ (see Example \ref{schematic_egg}) for a commutative unital quantale $(\V, \leq, \tensor, \top)$ such that $(\V, \leq)$ is a complete Heyting algebra, then (in view of Example \ref{schema_convex_egg}) it is known that a morphism of $\T_{\V\Cat}\Mod \cong \V\Cat$ is convex iff it is exponentiable; see \cite[Theorem 3.4]{Exponentiation_in_V_categories}. As in Remark \ref{nec_rmk}, we do not know if convexity is also necessary for exponentiability in general; the proof of necessity given in \cite[Theorem 3.4]{Exponentiation_in_V_categories} for $\T = \T_{\V\Cat}$ does not readily generalize to $\T\Mod$ for an arbitrary schematic extension $\T$ of $\T_\Pi$.
}
\end{rmk}

\begin{defn}
\label{schema_safe}
{\em
Let $\T$ be a schematic extension of $\T_\Pi$ given by a set $\scrS$ of axiom schemas over $\Pi$, and let $(\Phi, \calS v_1 \ldots v_n, \sigma)$ be an axiom schema in $\scrS$. This axiom schema is \textbf{safe} if $\sigma\left(\Rbar \wedge S\right) = \sigma\left(\Rbar\right) \wedge S$ for all $\Rbar \in \Pi(n)^\Phi$ and $S \in \Pi(n)$ (where $\Rbar \wedge S := \left(R_\varphi \wedge S \right)_{\varphi \in \Phi}$) and there is some function $\kappa : \Var \to \{v_1, \ldots, v_n\}$ that fixes $\{v_1, \ldots, v_n\}$ such that $\T$ entails $\sigma\left(\Rbar\right)v_1\ldots v_n \Longrightarrow \kappa \cdot \varphi_{R_\varphi}$ for all $\Rbar \in \Pi(n)^\Phi$ and $\varphi \in \Phi$. The axiom schema is \textbf{very safe} if it is safe and moreover $\Var(\Phi) \subseteq \{v_1, \ldots, v_n\}$ (so that $\T$ entails $\sigma\left(\Rbar\right)v_1\ldots v_n  \Longrightarrow \varphi_{R_\varphi}$ for all $\Rbar \in \Pi(n)^\Phi$ and $\varphi \in \Phi$).
}
\end{defn}

\begin{egg}
\label{schema_safe_egg}
{\em
Let $(\V, \leq, \tensor, \top)$ be a commutative unital quantale such that $(\V, \leq)$ is a complete Heyting algebra. 
Consider the relational Horn theory $\T_{\V\Cat}$ over $\Pi_\V$ of Example \ref{without_equality_examples}.\ref{quantale}, which is a schematic extension of $\T_{\Pi_\V} = \T_{\V\RGph}$ given by the set consisting of just the generalized transitivity axiom schema of Example \ref{schema_egg}. This axiom schema is \emph{not} safe in general, for otherwise $\T_{\V\Cat}\Mod \cong \V\Cat$ would be cartesian closed by Theorem \ref{lcc_thm_V} below, which is not true in general. However, the generalized transitivity axiom schema \emph{is} safe when $(\V, \leq)$ is totally ordered and $\tensor = \wedge$ is the binary meet operation of $(\V, \leq)$: for we of course have $(v_1 \wedge v_2) \wedge v = (v_1 \wedge v) \wedge (v_2 \wedge v)$ for all $v, v_1, v_2 \in \V$; and for any $v, v' \in \V$, we have w.l.o.g.~$v \leq v'$ and thus $v \wedge v' = v$, so the function $\kappa : \Var \to \{x, z\}$ defined by $\kappa(x) := x, \kappa(y) := x, \kappa(z) := z$ (and arbitrarily otherwise) is such that $\T_{\V\Cat}$ entails $x \sim_{v \wedge v'} z \Longrightarrow x \sim_v x$ and $x \sim_{v \wedge v'} z \Longrightarrow x \sim_{v'} z$.

Consider also the relational Horn theory $\T_{\PMet_\V}$ over $\Pi_\V$ of Example \ref{without_equality_examples}.\ref{quantale}, which is a schematic extension of $\T_{\Pi_\V} = \T_{\V\RGph}$ given by the set consisting of the generalized transitivity axiom schema and the symmetry axiom schema of Example \ref{schema_egg}. Again, the former axiom schema is not safe in general (because otherwise $\T_{\PMet_\V}\Mod \cong \PMet_\V$ would be cartesian closed by Theorem \ref{lcc_thm_V} below, which is not true in general). But the symmetry axiom schema is (evidently) very safe.  
}
\end{egg}

\begin{prop}
\label{schema_safe_prop}
Let $\T$ be a schematic extension of $\T_\Pi$ given by a set $\scrS$ of axiom schemas over $\Pi$, and let $f : X \to Z$ be a morphism of $\T$-models. Then $f$ is convex with respect to all very safe axiom schemas in $\scrS$.
\end{prop}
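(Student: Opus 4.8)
The plan is to imitate the proof of Proposition~\ref{safe_prop}, now in the schematic setting, using both clauses of the definition of a very safe axiom schema. Fix a very safe axiom schema $(\Phi, \calS v_1 \ldots v_n, \sigma)$ in $\scrS$ and an arbitrary instance $\left(\Rbar \in \Pi(n)^\Phi, \Phi_{\Rbar} \Longrightarrow \sigma\left(\Rbar\right)v_1 \ldots v_n\right)$; I must check the condition of Definition~\ref{schema_convex}. So let $\kappa_Z : \Var \to |Z|$ be a valuation with $Z \models \kappa_Z \cdot \varphi_{R_\varphi}$ for each $\varphi \in \Phi$, and let $x_i \in f^{-1}(\kappa_Z(v_i))$ for $1 \leq i \leq n$. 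Because $\Var(\Phi) \subseteq \{v_1, \ldots, v_n\}$, the side condition on good valuations at variables in $\Var(\Phi) \setminus \{v_1, \ldots, v_n\}$ is vacuous, so a valuation $\kappa : \Var \to |X|$ is good iff $\kappa(v_i) = x_i$ for each $1 \leq i \leq n$; in particular there is a good valuation $\kappa_0$ with $\kappa_0(v_i) := x_i$.

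I would then fix $T \in \Pi(n)$ with $T \leq \sigma\left(\Rbar\right)$ and $X \models Tx_1 \ldots x_n$, and show that $T \leq R_{\kappa_0}$; since $R_{\kappa_0} \leq \bigvee\{R_\kappa \mid \kappa \text{ good}\}$, this yields the required inequality. Unwinding the definition of $R_{\kappa_0}$, it suffices to produce a tuple $\Sbar \in \Pi(n)^\Phi$ with $X \models \kappa_0 \cdot \varphi_{S_\varphi}$ for every $\varphi \in \Phi$ and $\sigma\left(\Rbar \wedge \Sbar\right) \geq T$. The key choice is $S_\varphi := R_\varphi \wedge T$ for each $\varphi \in \Phi$, so that $\Rbar \wedge \Sbar = \Rbar \wedge T$; the safety identity $\sigma\left(\Rbar \wedge T\right) = \sigma\left(\Rbar\right) \wedge T$ together with $T \leq \sigma\left(\Rbar\right)$ then gives $\sigma\left(\Rbar \wedge \Sbar\right) = T$, so the second requirement holds.

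For the first requirement I would apply the entailment clause of very safety at the tuple $\Rbar' := \Rbar \wedge T$ rather than at $\Rbar$ itself: since $R'_\varphi = S_\varphi$ and $\sigma\left(\Rbar'\right) = \sigma\left(\Rbar\right) \wedge T = T$ by safety, very safety of the schema says precisely that $\T$ entails $Tv_1 \ldots v_n \Longrightarrow \varphi_{S_\varphi}$ for each $\varphi \in \Phi$. As $X$ is a $\T$-model with $X \models Tx_1 \ldots x_n$ and $\kappa_0(v_i) = x_i$, evaluating this entailment along $\kappa_0$ gives $X \models \kappa_0 \cdot \varphi_{S_\varphi}$ for each $\varphi \in \Phi$ (here $\varphi_{S_\varphi}$ is a $\Pi$-edge, so satisfaction in $X$ and in $\Xbar$ agree). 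Hence $T = \sigma\left(\Rbar \wedge \Sbar\right) \leq R_{\kappa_0}$, which completes the verification; as the instance was arbitrary, $f$ is convex with respect to the whole axiom schema.

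The only step needing any thought is the choice $S_\varphi = R_\varphi \wedge T$ and the matching decision to invoke very safety at $\Rbar \wedge T$ (so that both clauses of safety feed in at once); the rest is bookkeeping, directly parallel to the discrete case of Proposition~\ref{safe_prop}, where one takes the analogue $T = R$.
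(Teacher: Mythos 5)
Your proposal is correct and follows essentially the same argument as the paper: both take the good valuation determined by $\kappa(v_i) := x_i$ (good precisely because $\Var(\Phi) \subseteq \{v_1, \ldots, v_n\}$), choose the witnessing tuple $\Sbar = \Rbar \wedge T$, and invoke the safety identity $\sigma(\Rbar \wedge T) = \sigma(\Rbar) \wedge T = T$ together with the entailment clause of very safety at $\Rbar \wedge T$ to get $X \models \kappa \cdot \varphi_{R_\varphi \wedge T}$ and hence $T \leq R_\kappa$. The only cosmetic difference is that the paper passes through $X \models \left(\sigma\left(\Rbar\right) \wedge T\right)x_1 \ldots x_n$ using that $X$ is a $\T_\Pi$-model, whereas you use $T \leq \sigma\left(\Rbar\right)$ to identify $\sigma\left(\Rbar \wedge T\right)$ with $T$ outright; these are equivalent bookkeeping steps.
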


\begin{proof}
Let $(\Phi, \calS v_1 \ldots v_n, \sigma)$ be a very safe axiom schema in $\scrS$, and let $\Rbar \in \Pi(n)^\Phi$. Let $\kappa_Z : \Var \to |Z|$ be a valuation satisfying $Z \models \kappa_Z \cdot \varphi_{R_\varphi}$ for each $\varphi \in \Phi$, let $x_i \in f^{-1}(\kappa_Z(v_i))$ for each $1 \leq i \leq n$, let $T \in \Pi(n)$ be such that $T \leq \sigma\left(\Rbar\right)$ and $X \models Tx_1\ldots x_n$, and let us show that $T \leq \bigvee\{ R_\kappa \mid \kappa : \Var \to |X| \text{ is good}\}$. The valuation $\kappa : \Var \to |X|$ given by $\kappa(v_i) := x_i$ for each $1 \leq i \leq n$ is good (since $\Var(\Phi) \subseteq \{v_1, \ldots, v_n\}$), so it suffices to show that $T \leq R_\kappa$, i.e.~that
\[ T \leq \bigvee\left\{\sigma\left(\Rbar \wedge \Sbar\right) \mid \Sbar \in \Pi(n)^\Phi \text{ and } X \models \kappa \cdot \varphi_{S_\varphi} \ \forall \varphi \in \Phi\right\}. \] From $X \models Tx_1 \ldots x_n$ we obtain $X \models \left(\sigma\left(\Rbar\right) \wedge T\right)x_1\ldots x_n$ (since $X$ is a model of $\T_\Pi$) and then $X \models \sigma\left(\Rbar \wedge T\right)x_1\ldots x_n$, since $\sigma\left(\Rbar\right) \wedge T = \sigma\left(\Rbar \wedge T\right)$. Since $(\Phi, \calS v_1 \ldots v_n, \sigma)$ is very safe, it follows that $\T$ entails $\sigma\left(\Rbar \wedge T\right)v_1 \ldots v_n \Longrightarrow \varphi_{R_\varphi \wedge T}$ for each $\varphi \in \Phi$. Since $X$ is a $\T$-model, we then deduce that $X \models \kappa \cdot \varphi_{R_\varphi \wedge T}$ for each $\varphi \in \Phi$. We then have $T = \sigma\left(\Rbar\right) \wedge T = \sigma\left(\Rbar \wedge T\right) = \sigma\left(\Rbar \wedge \left(\Rbar \wedge T\right)\right) \leq \bigvee\left\{\sigma\left(\Rbar \wedge \Sbar\right) \mid \Sbar \in \Pi(n)^\Phi \text{ and } X \models \kappa \cdot \varphi_{S_\varphi} \ \forall \varphi \in \Phi\right\}$, as desired.     
\end{proof}

\noindent Definition \ref{schema_convex} now specializes to \emph{$\T_\Pi$-models} as follows:

\begin{defn}
\label{schema_convex_object}
{\em
Let $X$ be a $\T_\Pi$-model, let $(\Phi, \calS v_1 \ldots v_n, \sigma)$ be an axiom schema over $\Pi$, and let $\left(\Rbar \in \Pi(n)^\Phi, \Phi_{\Rbar}\Longrightarrow \sigma\left(\Rbar\right)v_1\ldots v_n\right)$ be an instance. We say that $X$ is \textbf{convex with respect to the instance $\left(\Rbar, \Phi_{\Rbar}\Longrightarrow \sigma\left(\Rbar\right)v_1\ldots v_n\right)$ of the axiom schema $(\Phi, \calS v_1 \ldots v_n, \sigma)$} if $X$ satisfies the following condition:

Let $x_1, \ldots, x_n \in |X|$, and let us say that a valuation $\kappa : \Var \to |X|$ is \textbf{good} if $\kappa(v_i) = x_i$ for each $1 \leq i \leq n$. For each good valuation $\kappa : \Var \to |X|$, we define
\[ R_\kappa := \bigvee\left\{\sigma\left(\Rbar \wedge \Sbar\right) \mid \Sbar \in \Pi(n)^\Phi \text{ and } X \models \kappa \cdot \varphi_{S_\varphi} \ \forall \varphi \in \Phi\right\}. \] Then for each $T \in \Pi(n)$ such that $T \leq \sigma\left(\Rbar\right)$ and $X \models Tx_1 \ldots x_n$, we require that
\[ T \leq \bigvee\{R_\kappa \mid \kappa : \Var \to |X| \text{ is good}\}. \] We say that $X$ is \textbf{convex with respect to an axiom schema} if $X$ is convex with respect to each instance of the axiom schema.
}  
\end{defn}

\noindent Theorem \ref{schema_convex_exp} now specializes to yield the following:

\begin{theo}
\label{schema_convex_exp_object}
Let $\T$ be a schematic extension of $\T_\Pi$. Then every convex $\T$-model is an exponentiable object of $\T\Mod$. \qed
\end{theo}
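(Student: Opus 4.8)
The plan is to deduce this by specializing Theorem \ref{schema_convex_exp} to the unique morphism $!_X : X \to 1$. Recall from \ref{lcc_para} that an object $X$ of $\T\Mod$ is exponentiable if and only if the morphism $!_X : X \to 1$ is exponentiable in $\T\Mod$. Hence it suffices to show that if a $\T_\Pi$-model $X$ is convex in the sense of Definition \ref{schema_convex_object}, then $!_X : X \to 1$ is a convex morphism in the sense of Definition \ref{schema_convex}; the conclusion then follows at once from Theorem \ref{schema_convex_exp}.

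To verify this equivalence I would unwind Definition \ref{schema_convex} in the case $Z = 1$, using that the terminal $\T$-model $1$ satisfies $|1| = \{\ast\}$ and $1 \models R\ast\ldots\ast$ for every $R \in \Pi$ (see \ref{relational_prod}). Fix an axiom schema $(\Phi, \calS v_1 \ldots v_n, \sigma)$ in $\scrS$ and an instance $\left(\Rbar, \Phi_{\Rbar} \Longrightarrow \sigma\left(\Rbar\right)v_1\ldots v_n\right)$. The only valuation $\kappa_Z : \Var \to |1|$ is the constant valuation at $\ast$, and it automatically satisfies $1 \models \kappa_Z \cdot \varphi_{R_\varphi}$ for every $\varphi \in \Phi$; moreover $(!_X)^{-1}(\ast) = |X|$, so the elements $x_i \in (!_X)^{-1}(\kappa_Z(v_i))$ range over all of $|X|$, and a valuation $\kappa : \Var \to |X|$ is good (for $!_X$) precisely when $\kappa(v_i) = x_i$ for each $1 \leq i \leq n$, since the additional clause $\kappa(v) \in (!_X)^{-1}(\kappa_Z(v)) = |X|$ is vacuous. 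Under these identifications the definition of $R_\kappa$ and the required inequality $T \leq \bigvee\{R_\kappa \mid \kappa \text{ good}\}$ in Definition \ref{schema_convex} become verbatim those of Definition \ref{schema_convex_object}. Thus $X$ is convex as an object iff $!_X : X \to 1$ is convex as a morphism.

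I do not expect any genuine obstacle here: this is a routine specialization, and the only point requiring a little care is matching the ``good valuation'' clauses in the two definitions, which works out precisely because $1$ is terminal, so that all hypotheses of Definition \ref{schema_convex} concerning the codomain trivialize. Once the equivalence of the two notions of convexity is established, Theorem \ref{schema_convex_exp} gives that $!_X$ is exponentiable, and \ref{lcc_para} then yields that $X$ is an exponentiable object of $\T\Mod$.
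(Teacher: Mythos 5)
Your proposal is correct and matches the paper's (deliberately terse) argument: the paper obtains Theorem \ref{schema_convex_exp_object} precisely by specializing Theorem \ref{schema_convex_exp} to $!_X : X \to 1$, with Definition \ref{schema_convex_object} being the $Z = 1$ instance of Definition \ref{schema_convex}, and \ref{lcc_para} converting exponentiability of $!_X$ into exponentiability of the object $X$. Your explicit check that the ``good valuation'' clause and the hypotheses on the codomain trivialize over the terminal model is exactly the routine verification the paper leaves implicit.
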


\begin{prop}
\label{schema_safe_object_prop}
Let $\T$ be a schematic extension of $\T_\Pi$ given by a set $\scrS$ of axiom schemas over $\Pi$, and let $X$ be a $\T$-model. Then $X$ is convex with respect to all safe axiom schemas in $\scrS$.  
\end{prop}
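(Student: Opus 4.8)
The plan is to adapt the argument of Proposition~\ref{schema_safe_prop} to the object level, exactly as Proposition~\ref{safe_object_prop} was obtained from Proposition~\ref{safe_prop} in the discrete case. Fix a safe axiom schema $(\Phi, \calS v_1 \ldots v_n, \sigma)$ in $\scrS$ and an instance $\left(\Rbar \in \Pi(n)^\Phi, \Phi_{\Rbar} \Longrightarrow \sigma\left(\Rbar\right) v_1 \ldots v_n\right)$. Following Definition~\ref{schema_convex_object}, I would take arbitrary $x_1, \ldots, x_n \in |X|$ and $T \in \Pi(n)$ with $T \leq \sigma\left(\Rbar\right)$ and $X \models T x_1 \ldots x_n$, and the goal is to exhibit a single good valuation $\kappa : \Var \to |X|$ with $T \leq R_\kappa$; since this $\kappa$ is good, it then follows that $T \leq \bigvee\{R_\kappa \mid \kappa \text{ is good}\}$, as required.

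For the construction, I would invoke the two clauses of safety (Definition~\ref{schema_safe}). Let $\kappa_0 : \Var \to \{v_1, \ldots, v_n\}$ be the function furnished by safety, so that $\kappa_0$ fixes $\{v_1, \ldots, v_n\}$ and $\T$ entails $\sigma\left(\Sbar\right) v_1 \ldots v_n \Longrightarrow \kappa_0 \cdot \varphi_{S_\varphi}$ for all $\Sbar \in \Pi(n)^\Phi$ and $\varphi \in \Phi$; let $\iota : \{v_1, \ldots, v_n\} \to |X|$ send $v_i \mapsto x_i$, and set $\kappa := \iota \circ \kappa_0 : \Var \to |X|$, which is good since $\kappa(v_i) = x_i$. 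The key chain of identities is: $T \leq \sigma\left(\Rbar\right)$ forces $\sigma\left(\Rbar\right) \wedge T = T$, and the multiplicativity clause of safety gives $\sigma\left(\Rbar \wedge T\right) = \sigma\left(\Rbar\right) \wedge T = T$, so that $X \models T x_1 \ldots x_n$ is literally the statement $X \models \sigma\left(\Rbar \wedge T\right) \kappa(v_1) \ldots \kappa(v_n)$. Applying the entailment clause of safety to the tuple $\Rbar \wedge T \in \Pi(n)^\Phi$, $\T$ entails $\sigma\left(\Rbar \wedge T\right) v_1 \ldots v_n \Longrightarrow \kappa_0 \cdot \varphi_{(R_\varphi \wedge T)}$ for each $\varphi \in \Phi$; since $X$ is a $\T$-model, evaluating this Horn formula along any valuation $\nu$ in $X$ with $\nu(v_i) = x_i$ and using that $\nu \circ \kappa_0 = \kappa$ (because $\kappa_0$ lands in $\{v_1, \ldots, v_n\}$), I obtain $X \models \kappa \cdot \varphi_{(R_\varphi \wedge T)}$ for each $\varphi \in \Phi$.

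To conclude, this says precisely that $\Sbar := \Rbar \wedge T$ is admissible in the join defining $R_\kappa$, so $R_\kappa \geq \sigma\left(\Rbar \wedge \Sbar\right)$; and since $\Rbar \wedge \Sbar = \Rbar \wedge \left(\Rbar \wedge T\right) = \Rbar \wedge T$ componentwise, this reads $R_\kappa \geq \sigma\left(\Rbar \wedge T\right) = T$, as wanted. I do not anticipate a genuine obstacle here: the content is a routine translation of Proposition~\ref{schema_safe_prop}, and the only points requiring care are the bookkeeping identity $\nu \circ \kappa_0 = \kappa$ (which lets the entailment be transported from the formal variables $v_1, \ldots, v_n$ to the elements $x_1, \ldots, x_n$) and the combined use of both halves of the definition of \emph{safe} — the identity $\sigma\left(\Rbar \wedge S\right) = \sigma\left(\Rbar\right) \wedge S$ together with the entailments $\sigma\left(\Rbar\right) v_1 \ldots v_n \Longrightarrow \kappa_0 \cdot \varphi_{R_\varphi}$.
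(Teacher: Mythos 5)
Your proposal is correct and follows essentially the same route as the paper's own proof: the same good valuation $\kappa = \iota \circ \kappa_0$ built from the safety function, the same use of the identity $T = \sigma\left(\Rbar\right) \wedge T = \sigma\left(\Rbar \wedge T\right)$, and the same witness $\Sbar := \Rbar \wedge T$ in the join defining $R_\kappa$. The bookkeeping point $\nu \circ \kappa_0 = \kappa$ that you flag is exactly the (implicit) step the paper uses when it deduces $X \models \kappa \cdot \varphi_{R_\varphi \wedge T}$ from the entailment hypothesis.
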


\begin{proof}
Let $(\Phi, \calS v_1\ldots v_n, \sigma)$ be a safe axiom schema in $\scrS$. Then there is some function $\lambda : \Var \to \{v_1, \ldots, v_n\}$ that fixes $\{v_1, \ldots, v_n\}$ such that $\T$ entails $\sigma\left(\Rbar\right)v_1\ldots v_n \Longrightarrow \lambda \cdot \varphi_{R_\varphi}$ for all $\Rbar \in \Pi(n)^\phi$ and $\varphi \in \Phi$. To show that $X$ is convex with respect to an instance $\left(\Rbar, \Phi_{\Rbar} \Longrightarrow \sigma\left(\Rbar\right)v_1\ldots v_n\right)$, let $x_1, \ldots, x_n \in |X|$, let $T \leq \sigma\left(\Rbar\right)$ be such that $X \models Tx_1 \ldots x_n$, and let us show that $T \leq \bigvee\{ R_\kappa \mid \kappa : \Var \to |X| \text{ is good}\}$. Let $\iota : \{v_1, \ldots, v_n\} \to |X|$ be the function defined by $\iota(v_i) := x_i$ for each $1 \leq i \leq n$. We then have a good valuation $\kappa := \iota \circ \lambda : \Var \to |X|$, so it suffices to show that $T \leq R_{\kappa}$, i.e.~that
$T \leq \bigvee\left\{\sigma\left(\Rbar \wedge \Sbar\right) \mid \Sbar \in \Pi(n)^\Phi \text{ and } X \models \kappa \cdot \varphi_{S_\varphi} \ \forall \varphi \in \Phi\right\}$. From $X \models Tx_1\ldots x_n$ we obtain $X \models \left(\sigma\left(\Rbar\right) \wedge T\right)x_1 \ldots x_n$ (because $X$ is a model of $\T_\Pi$) and then $X \models \sigma\left(\Rbar \wedge T\right)x_1 \ldots x_n$  (since $\sigma\left(\Rbar\right) \wedge T = \sigma\left(\Rbar \wedge T\right)$). We then deduce (from the hypothesis on $\lambda$) that $X \models \kappa \cdot \varphi_{R_\varphi \wedge T}$ for each $\varphi \in \Phi$, so that $T = \sigma\left(\Rbar\right) \wedge T = \sigma\left(\Rbar \wedge T\right) = \sigma\left(\Rbar \wedge \left(\Rbar \wedge T\right)\right) \leq \bigvee\left\{\sigma\left(\Rbar \wedge \Sbar\right) \mid \Sbar \in \Pi(n)^\Phi \text{ and } X \models \kappa \cdot \varphi_{S_\varphi} \ \forall \varphi \in \Phi\right\}$, as desired.     
\end{proof}

\noindent From Theorem \ref{schema_convex_exp}, Proposition \ref{schema_safe_prop}, Theorem  \ref{schema_convex_exp_object}, and Proposition \ref{schema_safe_object_prop} we immediately deduce the following result:

\begin{theo}
\label{lcc_thm_V}
Let $\T$ be a schematic extension of $\T_\Pi$ given by a set $\scrS$ of axiom schemas over $\Pi$. If each axiom schema in $\scrS$ is safe, then $\T\Mod$ is cartesian closed. If each axiom schema in $\scrS$ is very safe, then $\T\Mod$ is moreover locally cartesian closed. \qed
\end{theo}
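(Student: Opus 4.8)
The plan is to assemble the statement directly from the object- and morphism-level results already in hand, with essentially no new argument required beyond bookkeeping. First I would recall that $\T\Mod$ has all finite limits (in particular finite products) by \ref{relational_top}, so that the notions of cartesian closure and local cartesian closure apply to it in the first place. The whole proof then runs along two parallel tracks — one for objects, one for morphisms — each of which combines a "safety $\Rightarrow$ convexity" proposition with a "convexity $\Rightarrow$ exponentiability" theorem.

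For the first assertion, assume every axiom schema in $\scrS$ is safe. Let $X$ be an arbitrary $\T$-model. By Proposition \ref{schema_safe_object_prop}, $X$ is convex with respect to every safe axiom schema in $\scrS$; since by hypothesis \emph{every} schema in $\scrS$ is safe, this says precisely that $X$ is convex in the sense of Definition \ref{schematic_theory}. Then Theorem \ref{schema_convex_exp_object} gives that $X$ is an exponentiable object of $\T\Mod$. As $X$ was arbitrary, every object of $\T\Mod$ is exponentiable, so $\T\Mod$ is cartesian closed.

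For the second assertion, assume every axiom schema in $\scrS$ is very safe. Let $f : X \to Z$ be an arbitrary morphism of $\T\Mod$. By Proposition \ref{schema_safe_prop}, $f$ is convex with respect to every very safe axiom schema in $\scrS$, hence — since every schema in $\scrS$ is very safe — $f$ is convex in the sense of Definition \ref{schematic_theory}. Theorem \ref{schema_convex_exp} then shows that $f$ is exponentiable, and since $f$ was arbitrary, $\T\Mod$ is locally cartesian closed (and a fortiori cartesian closed, cf.\ \ref{lcc_para}). Because both halves are immediate combinations of earlier results, I do not anticipate any genuine obstacle; the only point needing a moment's care is the observation, immediate from Definition \ref{schematic_theory}, that "convex with respect to every (very) safe schema in $\scrS$" collapses to plain "convex" exactly when every schema in $\scrS$ is (very) safe.
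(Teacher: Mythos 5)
Your proposal is correct and follows exactly the paper's intended argument: the paper derives Theorem \ref{lcc_thm_V} immediately by combining Proposition \ref{schema_safe_object_prop} with Theorem \ref{schema_convex_exp_object} for the cartesian closed case, and Proposition \ref{schema_safe_prop} with Theorem \ref{schema_convex_exp} for the locally cartesian closed case, just as you do.
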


\noindent We also have the following analogue of Theorem \ref{quasitopos_thm}, whose proof is identical to that of Theorem \ref{quasitopos_thm} after using Theorem \ref{lcc_thm_V} in place of Theorem \ref{lcc_thm}; the result that $\V\RGph$ is a topological universe (and in particular a quasitopos) recovers \cite[Theorem 2.5]{Exponentiation_in_V_categories} (see also \cite[Theorem 5.6]{Exponentiability_lax_algebras}).

\begin{theo}
\label{quasitopos_thm_2}
Let $\T$ be a schematic extension of $\T_\Pi$ given by a set $\scrS$ of axiom schemas over $\Pi$, and suppose that $\T$ is without equality and that each axiom schema in $\scrS$ is very safe. Then $\T\Mod$ is a quasitopos, and moreover a topological universe. In particular, if $(\V, \leq, \tensor, \top)$ is a commutative unital quantale such that $(\V, \leq)$ is a complete Heyting algebra, then $\T_{\Pi_\V}\Mod = \T_{\V\RGph}\Mod \cong \V\RGph$ is a topological universe. \qed 
\end{theo}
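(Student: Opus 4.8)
The plan is to follow the proof of Theorem~\ref{quasitopos_thm} verbatim, substituting Theorem~\ref{lcc_thm_V} for Theorem~\ref{lcc_thm}. First I would invoke Theorem~\ref{lcc_thm_V}: since every axiom schema in $\scrS$ is very safe, $\T\Mod$ is locally cartesian closed. Next I would record the two ambient facts needed to feed into \ref{quasitopos_para}. Since $\T$ is a schematic extension of $\T_\Pi$ (Definition~\ref{schematic_theory}), every $\T$-model is a $\T_\Pi$-model and hence reflexive (Definition~\ref{T_Pi}), so $\T$ is a reflexive relational Horn theory; and $\T$ is without equality by hypothesis. Thus \ref{relational_top} gives that the concrete category $\T\Mod$ is topological over $\Set$, and \ref{refl_para} gives that it is well-fibred. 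Applying \ref{quasitopos_para} (a topological category over $\Set$ is a quasitopos as soon as it is locally cartesian closed) then shows $\T\Mod$ is a quasitopos, and combining this with well-fibredness shows that $\T\Mod$ is a topological universe in the sense of \cite[Definition~28.21]{AHS}.

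For the ``in particular'' clause I would argue as follows. When $(\V,\leq)$ is a complete Heyting algebra, the preordered relational signature $\Pi_\V$ is a complete Heyting algebra (Example~\ref{preord_sig_egg}), so the standing hypotheses of \S\ref{non_disc_section} apply with $\Pi := \Pi_\V$. The theory $\T_{\Pi_\V}$ is itself (trivially) a schematic extension of $\T_\Pi = \T_{\Pi_\V}$ given by the empty set $\scrS := \varnothing$ of axiom schemas, and the condition ``each axiom schema in $\scrS$ is very safe'' holds vacuously; moreover $\T_{\Pi_\V}$ is without equality, since all its axioms (the reflexivity axioms, the monotonicity axioms, and the join axioms of Definition~\ref{T_Pi}) are without equality. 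The first part of the theorem therefore applies and yields that $\T_{\Pi_\V}\Mod$ is a topological universe. Finally I would recall that $\T_{\Pi_\V}\Mod = \T_{\V\RGph}\Mod$ (as noted at the start of \S\ref{non_disc_section}, since $\sfk = \top$) and that $\T_{\V\RGph}\Mod \cong \V\RGph$ by Example~\ref{without_equality_examples}.\ref{quantale}, which completes the argument.

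I do not expect a genuine obstacle here, since the proof is purely an assembly of previously established results; the only points requiring a moment's care are verifying that the degenerate ``empty'' schematic extension in the ``in particular'' clause is a legitimate instance of Definition~\ref{schematic_theory} and that $\T_{\Pi_\V}$ genuinely contains no equality axioms, both of which are immediate. The substantive content of the theorem has already been done in Theorem~\ref{lcc_thm_V}.
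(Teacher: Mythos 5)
Your proof is correct and follows essentially the same route as the paper: the paper likewise proves this by repeating the proof of Theorem \ref{quasitopos_thm} with Theorem \ref{lcc_thm_V} in place of Theorem \ref{lcc_thm}, using \ref{relational_top}, \ref{refl_para}, and \ref{quasitopos_para} exactly as you do. Your handling of the ``in particular'' clause via the empty set of axiom schemas (together with the footnoted identification $\T_{\Pi_\V} = \T_{\V\RGph}$ when $\sfk = \top$) is a valid and natural way to make explicit what the paper leaves implicit.
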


\begin{rmk}[\textbf{Further directions}]
\label{further}
{\em
We conclude the paper by discussing two further questions that could be pursued. As we explained in Remarks \ref{nec_rmk} and \ref{nec_rmk_2}, our results only establish the \emph{sufficiency} of convexity for the exponentiability of objects and morphisms in various categories of relational structures, but for certain (classes of) examples it has been established (in \cite{Exponentiation_in_V_categories} and \cite{Exp_morphisms}) that convexity is also \emph{necessary} for exponentiability. It would therefore be interesting and useful to try to characterize the relational Horn theories $\T$ such that convexity is not only sufficient but also necessary for exponentiability in $\T\Mod$.  

Another further question is the following. We have studied convexity when $\T$ is a (certain kind of) relational Horn theory over a preordered relational signature $\Pi$ that is \emph{discrete} (in \S\ref{disc_section}) or a \emph{complete Heyting algebra} (in \S\ref{non_disc_section}). It would be interesting and useful to try to develop an approach to convexity that generalizes both cases and can be applied when $\Pi$ is an arbitrary preordered relational signature. The two definitions of convexity in Definitions \ref{convex_unified} and \ref{schema_convex} are rather different; in particular, the definition of convexity in Definition \ref{schema_convex}  (when $\Pi$ is a complete Heyting algebra) makes explicit and significant use of the complete lattice structure of $\Pi$, which is not available when $\Pi$ is discrete.    
}
\end{rmk}

\bibliographystyle{amsplain}
\bibliography{mybib}

\end{document}